\numberwithin{equation}{section}
\let\cal\mathcal
\def\Uscr{{\cal U}}
\let\blb\mathbb
\def\CC{{\blb C}} 
\def\DD{{\blb D}}
\def \ZZ{{\blb Z}}
\def \HH{{\blb H}}
\def\id{\text{id}}
\def\Id{\operatorname{id}}
\def\Der{\operatorname{Der}}
\def\Lotimes{\overset{L}{\otimes}}
\def\Mod{\operatorname{Mod}}
\def\DGMod{\operatorname{DGMod}}
\def\mod{\operatorname{mod}}
\def\Gr{\operatorname{Gr}}
\def\rad{\operatorname {rad}}
\def\PC{\operatorname {PC}}
\def\Ext{\operatorname {Ext}}
\def\Hom{\operatorname {Hom}}
\def\End{\operatorname {End}}
\def\RHom{\operatorname {RHom}}
\def\Tr{\operatorname {Tr}}
\def\coker{\operatorname {coker}}
\def\ker{\operatorname {ker}}
\def\Tor{\operatorname {Tor}}
\def\End{\operatorname {End}}
\def\id{{\operatorname {id}}}
\def\r{\rightarrow}
\DeclareMathOperator{\Alg}{Alg}
\DeclareMathOperator{\res}{res}
\let\invlim\projlim
\newtheorem{lemma}{Lemma}[section]
\newtheorem{proposition}[lemma]{Proposition}
\newtheorem{theorem}[lemma]{Theorem}
\newtheorem{corollary}[lemma]{Corollary}
\newtheorem{lemmas}{Lemma}[subsection]
\newtheorem{propositions}[lemmas]{Proposition}
\newtheorem{theorems}[lemmas]{Theorem}
\newtheorem{corollarys}[lemmas]{Corollary}
\theoremstyle{definition}
\newtheorem{definition}[lemma]{Definition}
\newtheorem{definitions}[lemmas]{Definition}
\theoremstyle{remark}
\newtheorem{remark}[lemma]{Remark}
\newtheorem{remarks}[lemmas]{Remark}
\newdimen\uboxsep \uboxsep=1ex
\def\uboxn#1{\vtop to 0pt{\hrule height 0pt depth 0pt\vskip\uboxsep
\hbox to 0pt{\hss #1\hss}\vss}}
\def\uboxs#1{\vbox to 0pt{\vss\hbox to 0pt{\hss #1\hss}
\vskip\uboxsep\hrule height 0pt depth 0pt}}
\def\PC{\operatorname{PC}}
\def\PCGr{\operatorname{PCGr}}
\def\cont{\operatorname{cont}}
\def\Tw{\operatorname{Tw}}
\def\Alg{\operatorname{Alg}}
\def\Algc{\operatorname{Algc}}
\def\Cog{\operatorname{Cog}}
\def\Cogc{\operatorname{Cogc}}
\def\PCAlg{\operatorname{PCAlg}}
\def\PCAlgc{\operatorname{PCAlgc}}
\def\PCCog{\operatorname{PCCog}}
\def\DGComod{\operatorname{DGComod}}
\def\PCDGComod{\operatorname{PCDGComod}}
\def\DGMod{\operatorname{DGMod}}
\def\PCDGMod{\operatorname{PCDGMod}}
\def\cone{\operatorname{cone}}
\def\HH{\operatorname{HH}}
\def\HC{\operatorname{HC}}
\def\C{\operatorname{C}}
\def\CC{\operatorname{CC}}
\def\DR{\operatorname{DR}}
\def\DDer{\operatorname{{\blb D}er}}
\def\DR{\operatorname{DR}}
\def\ldb{\{\!\!\{}
\def\rdb{\}\!\!\}}
\title{Calabi-Yau algebras and superpotentials}
\author{Michel Van den Bergh}
\address{Universiteit Hasselt\\ Universitaire Campus\\ 3590 Diepenbeek}
\thanks{The author is a senior researcher at the FWO}
\email{michel.vandenbergh@uhasselt.be}
\keywords{non-commutative geometry, superpotential, Calabi-Yau algebra, Ginzburg algebra}
\subjclass{16E55,16E45}
\begin{document}
\begin{abstract}
We prove that complete $d$-Calabi-Yau algebras in the sense of Ginzburg are derived 
from superpotentials.
\end{abstract}
\maketitle
\tableofcontents
\section{Introduction}
In this introduction we assume that $k$ is an algebraically closed field of
characteristic zero.  In the foundational paper \cite{Gi}
Ginzburg defines a $k$-algebra $A$ satisfying suitable finiteness
conditions to be \emph{$d$-Calabi-Yau} if there is a
quasi-isomorphism of complexes of $A$-bimodules
\[
\eta:\RHom_{A^e}(A,A\otimes A)\xrightarrow{\cong} {\Sigma^{-d}} A
\]
This property implies for example that the category of finite dimensional $A$-modules
is $d$-Calabi-Yau in the usual sense.  Sometimes one imposes the additional condition that~$\eta$ is
self dual but this appears to be automatic. See Appendix \ref{ref-C-112}. 

\medskip

In loc.\ cit.\ Ginzburg also introduces a particular class of
$3$-Calabi-Yau algebras which has found many applications in the theory of cluster algebras \cite{DWZ,DWZ2} and cluster
categories \cite{Amiot,Keller11,KY}.
Let $Q=(Q_0,Q_1)$ be a
finite quiver with vertices $Q_0$ and arrows $Q_1$. By definition a
\emph{superpotential} is an element $w$ of $kQ/[kQ,kQ]$. The \emph{Ginzburg
  algebra} $\Pi(Q,w)$ is the DG-algebra $(k\bar{Q},d)$ where $\bar{Q}$
is the graded quiver with vertices $Q_0$ and arrows
\begin{itemize}
\item the original arrows $a$ in $Q_1$ (degree 0);
\item opposite arrows $a^\ast$ for $a\in Q_1$ (degree -1);
\item loops $z_i$ at vertices $i\in Q
 _0$ (degree -2).
\end{itemize}
The differential is\footnote{The unusual sign in the definition of $da^\ast$ is an artifact of
our setup.}
\begin{align*}
da&=0&(a\in Q_1)\\
da^\ast&=-\frac{{}^\circ\partial w}{\partial a}& (a\in Q_1)\\
dz&=\sum_{a\in Q_1} [a,a^\ast] &
\end{align*}
for $z=\sum_i z_i$.
Here ${}^\circ \partial/\partial a$ is the so-called \emph{circular derivative}
\[
\frac{{}^\circ\partial w}{\partial a}=\sum_{w=uav}vu
\]
The homology in degree zero of a Ginzburg algebra is a so-called
 \emph{Jacobi algebra}.
Ginzburg shows that if the homology of $\Pi(Q,w)$ is concentrated in degree
zero then the associated Jacobi-algebra $H^0(\Pi(Q,w))$ is $3$-Calabi-Yau. In
\cite{Keller11}  a DG-version of this result is proved. 

\medskip

As observed in \cite[\S3.6]{Gi} the definition of $\Pi(Q,w)$ may be generalized to $d\ge 3$
(see also Lazaroiu's work in~\cite{Lazaroiu}). 
Let $Q$ be an arbitrary finite
graded quiver and let $\tilde{Q}$ be the corresponding \emph{double
  quiver} obtained from $Q$ by adjoining opposite arrows $a^\ast$ of degree
$-d+2-|a|$ for $a\in Q_1$ (there is a slight subtlety with loops which we gloss over,
see \S\ref{ref-10.3-39}). It is now well-known that
$N=k\tilde{Q}/[k\tilde{Q},k\tilde{Q}]$ is a Lie algebra
when equipped with the so-called \emph{necklace bracket}   $\{-,-\}$ \cite{LebBock,Ginzburg1,Kosymp}. Let $w\in N$ be such that $|w|=-d+3$ and
$\{w,w\}=0$ and let $\bar{Q}$ be obtained from $\tilde{Q}$ by adjoining loops 
$(z_i)_i$ of degree $-d+1$ as above.
Then the \emph{deformed DG-preprojective algebra\footnote{This is our own
      terminology.}} $\Pi(Q,d,w)$ is the DG-algebra
$(k\tilde{Q},d)$ with differential
\begin{align*}
da&=(-1)^{(|a|+1)|a^\ast|} \frac{{}^\circ\partial w}{\partial a^\ast}
& (a\in Q_1)\\
da^\ast&=-(-1)^{|a|}\frac{{}^\circ\partial w}{\partial a}& (a\in Q_1)\\
dz&=\sum_{a\in Q_1} [a,a^\ast]
\end{align*}
Again Ginzburg proves that if the homology of $\Pi(Q,d,w)$ is concentrated in degree
zero then $H^\ast(\Pi(Q,d,w))$ is $d$-Calabi-Yau. If $w$ depends only on the arrows in $kQ$ then the condition $\{w,w\}=0$ is vacuous and $\Pi(Q,d,w)$ is a so-called ``deformed Calabi-Yau completion'' of $kQ$ as introduced by Keller. See \cite[\S 6.2]{Keller11}.

\medskip

Results about Calabi-Yau algebras are often most conveniently proved under the hypothesis that the algebras are derived from superpotentials (see e.g.\
\cite{VdBdT1,VdBdT2}). It is therefore a natural question how
restrictive this hypothesis is.  Before dealing with this we note that
it is generally understood that the above definition of Calabi-Yau should
somehow be strengthened to include higher homotopy information in the
definition of~$\eta$.

A suitable strengthening of the Calabi-Yau property was suggested to
the author by Bernhard Keller. It is related to a dual property used
by Kontsevich and Soibelman in \cite{KS2}. 

We first observe that
$\eta$ can be interpreted as a class in the Hochschild homology group
$\HH_d(A)$. Then we make the following definition
\newtheorem*{definitionA}{Definition}
\begin{definitionA}
  A $d$-Calabi-Yau algebra $A$ is \emph{exact
    Calabi-Yau
\footnote{This is our own
      terminology.}
} if $\eta$ can be taken in the image of the Connes map
  $B:\HC_{d-1}(A)\r \HH_d(A)$.
\end{definitionA}

Even with this strengthening of the Calabi-Yau property, it is
probably only sensible to attempt a classification in sufficiently
local cases. In this paper we will discuss the \emph{complete}
case. That is, roughly speaking, we discuss topological algebras which
are quotients of quivers completed at path length. For technical
background see \S\ref{ref-4-0} and also \cite[Appendix]{KY}. Note that
the complete case encompasses the graded case which has been treated
in \cite{Bocklandt}. Indeed the category of graded algebras is
equivalent to the category of complete algebras equipped with a
$k^\ast$-action.

\medskip

The main results in this paper are the following.
\theoremstyle{plain}
\newtheorem*{theoremA}{Theorem A}
\newtheorem*{theoremB}{Theorem B}
\begin{theoremA} \def\thetheorem{A} (see Corollary \ref{ref-9.3-27}) A ``complete''
  $d$-Calabi-Yau algebra is exact $d$-Calabi-Yau.
\end{theoremA}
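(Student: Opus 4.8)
The plan is to translate exactness into a lifting problem for the Calabi-Yau class and then to solve that problem by filtering $A$ down to the conical case. First I would record the homological reformulation supplied by the Connes periodicity (SBI) sequence
$$\cdots \to \HC_{d+1}(A) \xrightarrow{S} \HC_{d-1}(A) \xrightarrow{B} \HH_d(A) \xrightarrow{I} \HC_d(A) \to \cdots,$$
in which $\im(B) = \ker(I)$. Thus $A$ is exact $d$-Calabi-Yau exactly when the class $\eta \in \HH_d(A)$ is killed by $I \colon \HH_d(A) \to \HC_d(A)$, equivalently when there is a cyclic class $\xi \in \HC_{d-1}(A)$ with $B\xi = \eta$. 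Producing such a $\xi$ is the homological shadow of ``$A$ is derived from a superpotential,'' the class $\xi$ being (the class of) the potential, so the concrete target is to construct this lift.

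Second I would set up the complete machinery in which the lift can be built. Since $A$ is complete it is a pseudo-compact quotient of a completed path algebra, and all of $\HH$, $\HC$ must be computed with the continuous (pseudo-compact) mixed complex of \S\ref{ref-4-0}. Using the self-duality of $\eta$ recorded in Appendix \ref{ref-C-112}, I would choose a self-dual cofibrant bimodule resolution of length $d$ built from the shifted double quiver; transporting the $(b,B)$-complex to this small model represents $\eta$ by an explicit Hochschild cycle. The essential extra input is the augmentation (radical-adic) filtration $F^\bullet A$, which presents $A$ as a complete filtered algebra and induces a complete separated filtration on the continuous cyclic complex, bringing the associated graded within reach of the conical (graded) case.

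Third, and this is the crux, I would build $\xi$ by deforming along $F^\bullet A$. The graded prototype, where $A$ carries a $k^\ast$-action, is exactly the situation of \cite{Bocklandt}, in which exactness is known and the cyclic lift is the graded superpotential; I would take this as the base case and lift the potential order by order up the filtration. The obstruction at each stage is a class in a graded piece of $\HC_\bullet$, and the main difficulty is to show that all of these obstructions vanish and that the resulting inductive limit converges in the pseudo-compact topology. Here I expect self-duality (Appendix \ref{ref-C-112}) to be decisive: the obstruction should be detected by the part of the class that is \emph{anti}-invariant under the canonical duality involution on Hochschild homology, and self-duality of $\eta$ forces this part to vanish, so that the graded lift propagates to a genuine $\xi \in \HC_{d-1}(A)$ with $B\xi = \eta$ rather than merely to one on the associated graded.
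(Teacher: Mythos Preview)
Your identification of the target---showing $I(\eta)=0$ in $\HC_d(A)$ via the Connes SBI sequence---is correct, but your proposed mechanism for achieving it has a genuine gap and differs entirely from the paper's route.

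The paper does not build $\xi$ by deforming along the radical filtration. Instead it proves outright that $\HC_d^{\text{red}}(A)=0$ (Corollary~\ref{ref-9.2-24}), so that \emph{every} class in $\HH_d(A)$ lies in the image of $B$. The argument has two ingredients. First, Goodwillie's theorem on nilpotent extensions, adapted to the pseudo-compact setting (Theorem~\ref{ref-9.1-23}), gives $\HC^{\text{per},\text{red}}_\ast(A)=0$: one writes $A=\invlim_n A/I^n$ with $I=\rad A$, uses that inverse limits are exact in $\PC(k)$ and commute with the products forming the periodic complex, and applies Goodwillie at each finite stage. Second, the Calabi-Yau property forces $\HH_i(A)=0$ for $i>d$ (a minimal bimodule resolution has length $d$), whence the Connes sequence yields $\HC_i^{\text{red}}(A)\cong\HC_{i+2}^{\text{red}}(A)$ for $i\ge d$; since $\HC^{\text{per},\text{red}}_i(A)=\invlim_n \HC_{i+2n}^{\text{red}}(A)=0$, this common value is zero. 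This is a clean global vanishing argument in characteristic zero, and it uses nothing about the specific class $\eta$ or its self-duality.

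Your obstruction-theoretic scheme, by contrast, rests on the claim that self-duality of $\eta$ (Appendix~\ref{ref-C-112}) kills the obstructions. This does not work as stated. Appendix~\ref{ref-C-112} proves that the flip involution $\ss$ on $A\Lotimes_{A^e}A$ acts \emph{trivially} on all of $\HH_d(A)$, for any $A$; every Hochschild class is already self-dual, so this carries no information singling out $\eta$, and in any case it is a statement about $\HH_d$, not about the cyclic groups where your obstructions would live. Your base case is also too narrow: \cite{Bocklandt} treats only $d=3$ with generators in degree one, not arbitrary $d$. Without an independent vanishing result for the obstruction groups---which is precisely what Goodwillie's theorem supplies---the inductive lift cannot be completed.
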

This result depends on a vanishing property for periodic cyclic homology which
follows from Goodwillie's classical result for nilpotent extensions. 
\begin{theoremB} \def\thetheorem{B} (see Theorem \ref{ref-10.2.2-37} and
  \S\ref{ref-10.3-39}) Assume $d\ge 3$. A 
``complete'' exact
  $d$-Calabi-Yau DG-algebra $A$ concentrated in degrees $\le 0$ such that
$A/\rad A$ is commutative, is
  quasi-isomorphic to a (completed) deformed DG-preprojective algebra
  $\Pi(Q,d,w)$ with the degrees of the arrows in $Q$ lying in the
  interval $[(-d+2)/2,0]$ and $w$ being a linear combination of paths of length at
least $3$.
\end{theoremB}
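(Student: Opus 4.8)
The plan is to pass to a minimal model, read the double quiver off the Calabi-Yau pairing on $\Ext$-groups, and then manufacture the superpotential from the exactness of $\eta$. First I would replace $A$ by a minimal model. Since $A$ is complete and $A/\rad A$ is a finite product of copies of $k$ (here I use that $k$ is algebraically closed and that $A/\rad A$ is commutative), standard obstruction theory yields a quasi-isomorphic complete DG-algebra $(\widehat{kQ'},d)$, the completed path algebra of a graded quiver $Q'$ whose vertices are the primitive idempotents of $R:=A/\rad A$ and whose graded arrow space is, up to a degree shift, $\bigoplus_{i,j,\,n\ge 1}\Ext^n_A(S_i,S_j)^\ast$, an $\Ext^n$-class contributing an arrow in degree $1-n$. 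Minimality means the linear part of $d$ vanishes, i.e. $d$ carries each arrow into the square of the completed augmentation ideal; since $A$ lives in degrees $\le 0$ every arrow has nonpositive degree. The exact $d$-Calabi-Yau structure is a quasi-isomorphism invariant, so it transports to $(\widehat{kQ'},d)$ and I may work there.

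Next I would use the Calabi-Yau isomorphism $\eta$ to pin down $Q'$. The class $\eta$ induces a perfect graded pairing of total degree $d$ on $\Ext^\bullet_A(R,R)$, hence on the arrow space, matching an arrow $a$ of degree $|a|$ (from $\Ext^n$) with a dual arrow $a^\ast$ of degree $-d+2-|a|$ (from $\Ext^{d-n}$). Choosing one arrow from each pair so that $|a|\ge|a^\ast|$ places the chosen arrows in the interval $[(-d+2)/2,0]$ and exhibits $Q'$ as the double quiver $\tilde Q$ of a graded quiver $Q$ with arrows in that range. The self-pairing $\Ext^d_A(S_i,S_i)\cong\Ext^0_A(S_i,S_i)^\ast=k$ at each vertex contributes exactly one class in degree $1-d=-d+1$; these are the loops $z_i$, and recovering their differential is where the loop subtlety alluded to in the introduction must be handled with care.

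The core of the argument, and the step I expect to be the main obstacle, is producing the superpotential from exactness. On the completed smooth algebra $k\tilde Q$ the differential $d$, being a degree-$1$ derivation, corresponds to a noncommutative vector field, while the Calabi-Yau class $\eta$ equips $k\tilde Q$ with a symplectic form $\omega$ of degree $2-d$; that $d$ is Calabi-Yau means the contracted one-form $\iota_d\omega$ is closed. The exactness hypothesis, namely that $\eta$ lies in the image of the Connes map $B$, is exactly what promotes closed to exact: it supplies a cyclic potential $w\in N=k\tilde Q/[k\tilde Q,k\tilde Q]$, necessarily of degree $-d+3$, whose noncommutative de Rham differential equals $\iota_d\omega$, so that $d$ becomes the Hamiltonian vector field $\{w,-\}$ for the necklace bracket. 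In coordinates this is precisely the circular-derivative differential written in the definition of $\Pi(Q,d,w)$, and the identity $d^2=0$ becomes $\{w,w\}=0$. The hard part is erecting this symplectic dictionary rigorously in the complete topological setting, tracking every sign (including the anomalous sign in $da^\ast$ flagged in the introduction), and verifying that the loop equation $dz=\sum_{a}[a,a^\ast]$ emerges from the self-dual part of $\eta$.

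Finally, minimality forces the length condition. Since $da^\ast$ is, up to sign, the circular derivative ${}^\circ\partial w/\partial a$, which lowers path length by one, any term of $w$ of length $\le 2$ would produce a constant or linear contribution to some $da$ or $da^\ast$, contradicting minimality; hence $w$ is a linear combination of paths of length $\ge 3$. Assembling these steps identifies $(\widehat{kQ'},d)$, and therefore $A$, with the completed deformed DG-preprojective algebra $\Pi(Q,d,w)$ subject to the stated degree and length constraints.
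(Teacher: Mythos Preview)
Your outline has the right opening moves---pass to a minimal model $(T_lV,d)$ over $l=A/\rad A$, and use the Calabi-Yau pairing on $\Ext^\ast_A(l,l)$ to see that $V=V_c\oplus lz$ with $V_c$ self-dual of degree $-d+2$ and $z$ central of degree $-d+1$. This is exactly how the paper begins (Theorem \ref{ref-11.2.1-58}, Proposition \ref{ref-A.5.4-103}). But the ``symplectic dictionary'' you invoke in the core step is where the argument breaks down. You assert that the Calabi-Yau condition is equivalent to $\iota_d\omega$ being closed and that exactness promotes this to $\iota_d\omega=Dw$; neither implication is justified, and in fact the paper's proof shows that this is not how the pieces fit together. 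First, $\iota_d\omega$ does not even make sense as written: the contraction in \S\ref{ref-10-28} is defined for \emph{double} derivations, not for the ordinary derivation $d$. Second, and more seriously, there is no a priori reason that $d$ preserves $T_lV_c$ (i.e.\ that $dv$ avoids $z$ for $v\in V_c$), so you cannot yet speak of $d$ as a vector field on the symplectic space $(T_lV_c,\omega_\eta)$ at all.

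What exactness actually buys is different from what you claim. In the paper (proof of $(1)\Rightarrow(3)$ in Theorem \ref{ref-11.2.1-58}), reduced cyclic homology of the cofibrant model is computed via the $X$-complex as $\HC^{\text{red}}_{d-1}(A)=H^{-d+1}(A/(l+[A,A]))$; the cyclic lift $\xi$ of the Hochschild class is thus represented by some $\chi\in A$ with $d\chi\in[A,A]$. The non-degeneracy criterion (Lemma \ref{ref-11.1.2-46})---a substantial computation you have no analogue of---forces $\chi=uz^\dagger+\text{higher}$. Replacing $z$ by $\sigma'\chi\sigma''$ one obtains $dz=\sigma'\eta\sigma''$ with $\eta$ a sum of commutators in $T_lV_c$; a further inductive coordinate change ($(3)\Rightarrow(2)$) kills all but the quadratic part, yielding $dz=\sigma'\eta_2\sigma''$ exactly and, for degree reasons, $d(T_lV_c)\subset T_lV_c$. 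So exactness controls the \emph{loop} differential $dz$, not the superpotential. Only after this normalization does Lemma \ref{ls} construct $w$, and it does so by the explicit formula $\bar{w}=(-1)^{|\eta'|+1}\eta'\,d\eta''$ together with a direct verification that $dv=\{w,v\}_{\omega_\eta}$; the input here is $d^2=0$ and the form of $dz$, not any ``closed $\Rightarrow$ exact'' step. Your narrative reverses the roles: you expect exactness to hand you $w$ and treat $dz=\sum[a,a^\ast]$ as a side verification, whereas in the actual proof exactness is precisely what produces the loop equation, and $w$ then falls out of the remaining structure.
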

By combining Theorems A and B it follows in particular that a complete
$3$-Calabi-Yau algebra is a Jacobi algebra (indeed: in this case the degree conditions ensure that the arrows in $Q$ have degree zero and that $w$  only depends on $Q$). This result has been
announced several years ago by Rouquier and Chuang but so far the
proof has not been published. A proof in the graded case for algebras generated
in degree one has been given in~\cite{Bocklandt}. 
A proof has also been given by Ed Segal under a suitable strengthening of the
Calabi-Yau condition (see \cite[Theorem 3.3]{segal}).

Some of the ideas of this manuscript have been used by Davison in
\cite{Davison} where he shows that group algebras of compact
hyperbolic manifolds of dimension greater than one are \emph{not}
derived from superpotentials.

\medskip

We now give an outline of the content of this paper. Somewhat arbitrarily it is
divided into a main body and appendices.
Whereas in the body of the paper we often impose boundedness conditions on
our DG-algebras, and furthermore $k$ is often characteristic zero, we have avoided
making such restrictions in the appendices. 

\medskip

In \S\ref{ref-4-0}-\ref{ref-6-8} we
discuss pseudo-compact algebras, modules, bimodules,\dots and their
homological algebra.  Our approach is somewhat different from
\cite[Appendix]{KY} as it relies heavily on duality.
We also need the bar cobar formalism which to the best of the author's knowledge
has not been systematically developed in the pseudo compact setting (although this
turns out to be easy). 
In order to make the text not too heavy we have deferred most
details to Appendix \ref{ref-A-82}.

In \S\ref{ref-7-9} we briefly discuss cyclic homology and its extension to the
pseudo-compact case. We also remind the reader of the $X$-complex formalism due
to Cuntz and Quillen. 

In \S\ref{ref-8-17} we discuss the different notions of  Calabi-Yau algebras and
in \S\ref{ref-9-22} we prove Theorem A. 

In \S\ref{ref-10-28} we introduce deformed DG-preprojective algebras and in
\S\ref{ref-11-43} we prove Theorem B.

In \S\ref{ref-12-72} we prove that the Koszul dual of a pseudo-compact exact
Calabi-Yau algebra has a cyclic $A_\infty$-structure. 

In Appendix \ref{ref-B-106} we prove the technical result that the
pseudo-compact Hochschild complex really computes $A\Lotimes_{A^e} A$.

In Appendix \ref{ref-C-112} we prove that the morphism $\eta$ appearing in Ginzburg's
definition of a Calabi-Yau algebra is automatically self dual.

Finally in Appendix \ref{ref-D-113} we prove some results on the behaviour of
Hoch\-schild/cyclic homology under Koszul duality. 
\section{Acknowledgement}
The author wishes to thank Bernhard Keller for generously sharing his
insights on Calabi-Yau algebras and in particular for explaining his
strengthening of the Calabi-Yau property during a 2006 Paris visit.
In addition he thanks Bernhard Keller for technical help with the bar
cobar formalism.  

This paper was furthermore strongly influenced by
ideas of Ginzburg \cite{Gi}, Kontsevich and Soibelman \cite{KS2} and
Lazaroiu \cite{Lazaroiu}.

The author thanks Maxim Kontsevich for pointing out to him that ``exact'' Calabi-Yau
is a better terminology than ``strongly'' Calabi-Yau which was used
in the first version of this article.

Finally the author thanks the referee for his very thorough reading of the manuscript. 

\section{Notation and conventions}
Throughout $k$ will be  a ground field. Unadorned tensor products are over
$k$. We follow a version of the Sweedler convention in the sense that
an element $a$ of a tensor product is written as $a'\otimes a''$ (without
summation sign). 

Unless otherwise specified algebras have units and maps are unit preserving. Modules
are unital as well. 
A similar convention holds for coalgebras and comodules.

If we write $a\in A$ for $A$ graded then we implicitly assume that $a$
is homogeneous. The degree of $a$ is written as $|a|$.
The shift functor on complexes is written as~$\Sigma$. The elements of $\Sigma A$
are written as $sa$ for $a\in A$ with $|sa|=|a|-1$. $\Hom_A(-,-)$ denotes graded Hom's. If we only want degree zero maps we write $\Hom_{\Gr(A)}(-,-)$. 

To reduce clutter  all operations are implicitly completed when working with pseudo-compact objects. 
Indeed the completions are implicit in the monoidal structure on the underlying
category of pseudo-compact vector spaces.

Throughout $\Delta$ means comultiplication, $\mu$ means multiplication, $\epsilon$ 
is either the counit or coaugmentation
and $\eta$ is either the unit or augmentation. 

The $1$-form associated to a function $f$ is denoted by $Df$. This is to avoid confusion
with the differential on DG-objects which will be denoted by $d$. We regard
$D(-)$ as an operation of homological degree zero. In other words it commutes with~$d$. 

\section{Pseudo-compact objects}
\label{ref-4-0}
When dealing with completed path algebras the natural context to
work in is that of pseudo-compact vector spaces/algebras/modules etc\dots.
See \cite{Gabriel,KY,VdB19}. We briefly recall the salient
features of this setting. 

By definition a \emph{pseudo-compact $k$-vector space} is a linear
topological vector space which is complete and whose topology is
generated by subspaces of finite codimension.  We will denote the
corresponding category by $\PC(k)$.  The topology on finite
dimensional pseudo-compact vector spaces is necessarily discrete and
conversely a finite dimensional vector space with the discrete
topology is pseudo-compact.  In particular $k$ itself is naturally
pseudo-compact.

We have inverse dualities
\begin{equation}
\label{ref-4.1-1}
\begin{gathered}
\DD:\Mod(k)\r \PC(k)^\circ:V\mapsto \Hom(V,k)\\
\DD:\PC(k)\r \Mod(k)^\circ:W\mapsto \Hom_{\cont}(W,k)
\end{gathered}
\end{equation}
where we recall that for $V\in \Mod(k)$ the topology on $\DD V$ is generated
by the kernels of $\DD V\r \DD V'$ where $V'$ runs through the finite dimensional
subspaces of~$V$.

It follows that $\PC(k)$ is a coGrothendieck category. In particular
$\PC(k)$ has exact filtered inverse limits (axiom AB5${}^*$). Furthermore
since the dual of $\PC(k)$ is locally noetherian, the product of 
projectives in $\PC(k)$ is projective.

One checks that the functor forgetting the topology
\[
\PC(k)\r \Mod(k)
\]
is exact, faithful and commutes with inverse limits. However it does
not commute with infinite direct sums.

We will systematically use $\DD$ to transfer notions from $\Mod(k)$ to 
$\PC(k)$. Thus if $V,W\in \PC(k)$ then we put
\[
V\otimes W=\DD (\DD W\otimes \DD V)
\]
or concretely
\[
V\otimes W=\invlim_{V',W'} V/V'\otimes W/W'
\]
where $V'$, $W'$ run through the open subspaces of $V,W$ respectively. 
By construction~$\DD$ is compatible 
with the monoidal structure. 

Below we also need \emph{graded pseudo-compact
vector spaces}. These are simply $\ZZ$-graded objects in the category
$\PC(k)$, i.e.\ sequences of pseudo-compact objects $(V^i)_{i\in \ZZ}$
(sometimes written as a formal direct product $\prod_{i\in \ZZ} V^i$). We denote the corresponding category by $\PCGr(k)$.
Putting $\DD((V^i)_{i\in \ZZ})=(\DD V^{-i})_{i\in \ZZ}$ defines a duality
between $\PCGr(k)$ and $\Gr(k)$, the latter being the category of graded $k$-vector spaces.

 There is a natural
functor ``forgetting the grading'' which commutes with $\DD$
\[
(-)^u:\PCGr(k)\r \PC(k):(V_i)_i\mapsto \prod_i V_i
\]

The monoidal structure on $\PCGr(k)$
is given by
\begin{align*}
(V_i)_i\otimes (W_j)_j&=\DD (\DD ((W_j)_j)\otimes \DD ((V_i)_i))\\
&=\biggl(\,\,\prod_{i+j=k} V_i\otimes W_j\biggr)_k
\end{align*}
Using the monoidal structure on $\PC(k)$ it is possible to define
\emph{pseudo-compact $k$-algebras, modules, bimodules}, etc\dots which are
simply the corresponding objects in $\PC(k)$.
\begin{lemma}
\begin{enumerate}
\item The topology on a pseudo-compact $k$-algebra $A$ is generated by
  twosided ideals of finite codimension.
\item Let $M$ be a pseudo-compact
left module over $A$. Then the topology on $M$ is generated by submodules of finite
codimension.
\item Let $M$ be a pseudo-compact
bimodule over $A$. Then the topology on $M$ is generated by subbimodules of finite
codimension.
\end{enumerate}
\end{lemma}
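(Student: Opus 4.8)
The plan is to reduce all three statements to a single observation about unital structures together with the continuity of the relevant (completed) action maps. First I would record the elementary but crucial fact that in a pseudo-compact vector space every \emph{open} subspace automatically has finite codimension: by definition an open subspace contains one of the basic finite-codimension subspaces generating the topology, and a subspace containing a finite-codimension subspace is itself of finite codimension. Consequently, to prove that open two-sided ideals (resp.\ submodules, subbimodules) generate the topology it suffices to show that every basic open subspace $U$ of finite codimension contains an \emph{open} ideal (resp.\ submodule, subbimodule); such substructures are then cofinal among the basic neighbourhoods of $0$ and hence form a neighbourhood basis.

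For part (1), given a basic open $U$ I would take $I$ to be the largest two-sided ideal contained in $U$, which because $A$ is unital is simply $I=\{a\in A\mid AaA\subseteq U\}$ (note $a=1\cdot a\cdot 1$ forces $I\subseteq U$). It is a formal check that $I$ is a two-sided ideal containing every ideal lying in $U$, so the only real content is that $I$ is open. For this I would invoke the continuity of the completed triple multiplication $A\otimes A\otimes A\to A$: the preimage of $U$ is open, hence contains a basic neighbourhood $\ker\bigl(A\otimes A\otimes A\to A/V\otimes A/V\otimes A/V\bigr)$ for a suitable open finite-codimension $V\subseteq A$ (replacing three subspaces by their intersection). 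Since this kernel contains $V\otimes A\otimes A+A\otimes V\otimes A+A\otimes A\otimes V$ and $U$ is closed, its image under multiplication gives in particular $AVA\subseteq U$, whence $V\subseteq I$. Thus $I$ contains an open finite-codimension subspace, so it is open and of finite codimension.

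Parts (2) and (3) run along identical lines. For a left module $M$ I would set $N=\{m\in M\mid Am\subseteq U\}$, the largest submodule inside $U$ (again $N\subseteq U$ because $1\in A$), and use continuity of the action $A\otimes M\to M$: the preimage of $U$ contains $\ker(A\otimes M\to A/V\otimes M/U')$ for open finite-codimension $V\subseteq A$ and $U'\subseteq M$, forcing $AU'\subseteq U$ and hence $U'\subseteq N$. For a bimodule I would instead use the two-sided action $A\otimes M\otimes A\to M$ and set $N=\{m\mid AmA\subseteq U\}$, exactly as in part (1) with $M$ placed in the middle factor; continuity produces an open finite-codimension $U'\subseteq M$ with $AU'A\subseteq U$, so $U'\subseteq N$.

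The step I expect to require the most care is the translation between the continuity of the completed multiplication/action and the containments $AVA\subseteq U$ (and their analogues): one must correctly identify the basic open neighbourhoods of $0$ in the completed tensor products $A\otimes A\otimes A$ and $A\otimes M\otimes A$ as kernels of the projections $\invlim$ onto finite-dimensional quotients, and observe that the image of such a kernel under the (continuous, hence closure-respecting) multiplication map lands in the closed subspace $U$. Everything else — the unitality trick giving $I\subseteq U$ and the maximality of $I$ — is purely formal.
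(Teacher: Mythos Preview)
Your argument is correct: taking the largest ideal (resp.\ submodule, subbimodule) inside a basic open $U$ and showing it is open via continuity of the completed multiplication/action is exactly the ``direct'' proof the paper alludes to in its first sentence but does not spell out. The only mild comment is that you do not really need to worry about closures: once the kernel of $A\otimes A\otimes A\to A/V\otimes A/V\otimes A/V$ lies in the preimage of $U$, it suffices to test on pure tensors $a\otimes v\otimes b$ with $v\in V$, and these already give $avb\in U$ for all $a,b$, hence $V\subseteq I$.

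The paper, by contrast, emphasises the dual approach. Under the duality $\DD:\PC(k)\leftrightarrow\Mod(k)^\circ$, a pseudo-compact $k$-algebra $A$ corresponds to an ordinary $k$-coalgebra $\DD A$, and open finite-codimension two-sided ideals of $A$ correspond to finite-dimensional subcoalgebras of $\DD A$. The statement then reduces to the fundamental theorem of coalgebras: every coalgebra is the filtered union of its finite-dimensional subcoalgebras. Parts (2) and (3) follow in the same way from local finiteness of comodules and bicomodules. This route is shorter and fits the paper's systematic use of $\DD$ to transport structure, whereas your argument is self-contained and makes the role of unitality and continuity explicit; both are entirely valid.
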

\begin{proof} One may prove this directly using the continuity properties of
the multiplication. Alternatively one may use the following observation:
if $A$ is a pseudo-compact $A$-algebra then $\DD A$ is a coalgebra. (1) then
follows from the fact that a coalgebra is locally finite (every coalgebra is
a filtered direct limit of finite dimensional coalgebras) \cite{Mont2}. (2)(3) are proved
in the same way. 
\end{proof}
\begin{remark}
  A pseudo-compact $k$-algebra $A$ is traditionally defined as a
  linear topological $k$-algebra which is complete and whose topology
  is generated by left ideals of finite codimension
  \cite{Gabriel,KY,VdB19}. This is equivalent to the above
  definition.  Indeed such an $A$ is an object in
  $\PC(k)$ and as the multiplication on $A$ is continuous, it represents
an algebra object in $\PC(k)$.
Similar observations hold for modules and bimodules. 
\end{remark}
Let $A$ be a pseudo-compact $k$-algebra. The common annihilator of the simple
pseudo-compact $A$-modules is called the radical of $A$ and is denoted
by $\rad A$. We recall the following
\begin{proposition} \cite[Prop.\ IV.13]{Gabriel} The radical of $A$ coincides
with the ordinary Jacobson radical of $A$. 
\end{proposition}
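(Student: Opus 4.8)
The plan is to prove the two inclusions $J(A)\subseteq \rad A$ and $\rad A\subseteq J(A)$ separately, where $J(A)$ denotes the ordinary Jacobson radical, i.e.\ the intersection of all maximal left ideals of the abstract ring $A$, equivalently the intersection $\bigcap_S \Ann(S)$ over all simple abstract left $A$-modules $S$.

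First I would pin down the simple pseudo-compact modules. If $S$ is a nonzero simple pseudo-compact $A$-module, then by the Lemma above its topology is generated by submodules of finite codimension; since $S$ is Hausdorff there is a proper open submodule, which is also closed, hence zero by simplicity. Thus $0$ is open, $S$ is discrete, and as $0$ has finite codimension, $S$ is finite-dimensional. In particular every simple pseudo-compact module is a simple abstract $A$-module, so the class of simple pseudo-compact modules is contained in the class of all simple $A$-modules. Since both radicals are intersections of annihilators $\Ann(S)$, intersecting over the smaller class yields the larger ideal, whence $J(A)\subseteq \rad A$.

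For the reverse inclusion I would use that $\rad A$ is a (two-sided) ideal, each $\Ann(S)$ being two-sided, together with the completeness of $A$. By the Lemma the open two-sided ideals $I$ of finite codimension form a neighbourhood basis of $0$, so $A=\invlim_I A/I$ with each $A/I$ finite-dimensional. Given $x\in\rad A$ and such an $I$, every simple $A/I$-module is a finite-dimensional simple $A$-module, hence simple pseudo-compact, so $x$ annihilates it; therefore the image $\bar x\in A/I$ lies in $\rad(A/I)=J(A/I)$. Since $A/I$ is finite-dimensional its Jacobson radical is nilpotent, so $\bar x$ is nilpotent and $1-\bar x$ is invertible in $A/I$, with inverse a polynomial in $\bar x$. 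These inverses are compatible under the transition maps $A/I\to A/I'$ (inverses being unique), so they assemble to an element $u\in\invlim_I A/I=A$ satisfying $(1-x)u=u(1-x)=1$. Thus $1-x$ is invertible for every $x\in\rad A$; as $\rad A$ is a left ideal, this is precisely the quasi-regularity criterion for membership in the Jacobson radical, giving $\rad A\subseteq J(A)$.

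The main obstacle is the passage from invertibility modulo each finite-codimension ideal to genuine invertibility in $A$: this is where completeness is essential, and it is the only step that really uses the pseudo-compact structure rather than formal properties of radicals. The finite-dimensionality of simple pseudo-compact modules, which makes each $A/I$ Artinian with nilpotent radical, is the other point to get right; everything else is the standard interplay between annihilators of simple modules and maximal left ideals.
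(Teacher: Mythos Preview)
Your argument is correct. The paper does not supply its own proof of this proposition; it simply quotes the result from Gabriel \cite[Prop.\ IV.13]{Gabriel}, so there is nothing in the text to compare your proof against directly. Your two inclusions are both sound: for $J(A)\subseteq\rad A$ you correctly observe that a simple pseudo-compact module is discrete (an open submodule is closed, hence zero) and finite-dimensional, so in particular simple as an abstract module; and for $\rad A\subseteq J(A)$ the inverse-limit argument producing a genuine inverse of $1-x$ from the compatible inverses in each $A/I$ is exactly the way completeness enters, and the quasi-regularity criterion (applied to the \emph{left ideal} $\rad A$, so that $ax\in\rad A$ for every $a$) then gives the inclusion. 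One small point worth making explicit is that a simple $A/I$-module, viewed as a discrete finite-dimensional $A$-module, is indeed a pseudo-compact $A$-module because the action factors through the continuous surjection $A\to A/I$; you use this when asserting that $x$ annihilates it.
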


To eliminate another source of confusion we include
the following lemma. 
\begin{lemma} \label{ref-4.4-2}
If $A$ is a graded pseudo-compact $k$-algebra then $A^u$
  is an ordinary pseudo-compact algebra and its topology is generated
  by ideals of the form $L^u$ where $L$ is a graded ideal 
in $A$ of finite total codimension. We also have
\[
(\rad A)^u=\rad (A^u)
\]
\end{lemma}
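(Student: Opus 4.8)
The plan is to deduce all three assertions from the monoidality of the forgetful functor $(-)^u$ and from the duality with coalgebras.

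For the first assertion I would first check that $(-)^u\colon\PCGr(k)\r\PC(k)$ is monoidal. Since the tensor product on $\PC(k)$ is defined through $\DD$, it converts products into products, and so from the explicit formula for $\otimes$ on $\PCGr(k)$ one reads off $(V_i)_i^u\otimes(W_j)_j^u=\prod_{i,j}V_i\otimes W_j=\bigl((V_i)_i\otimes(W_j)_j\bigr)^u$, compatibly with the associativity and unit constraints. Applying this monoidal functor to the algebra object $A$ then yields an algebra object $A^u$ in $\PC(k)$, i.e.\ an ordinary pseudo-compact algebra.

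For the topology I would pass to duals. Write $C=\DD A$, a graded coalgebra, and $\bar C=\DD(A^u)=\bigoplus_i\DD A_i$, its underlying ungraded coalgebra. Closed two-sided ideals $I\subseteq A^u$ of finite codimension correspond bijectively to finite-dimensional subcoalgebras $D=\DD(A^u/I)\subseteq\bar C$ (larger $D$ corresponding to smaller $I$), and under this correspondence the ideals $L^u$, with $L$ a graded ideal of finite total codimension, are exactly the finite-dimensional \emph{homogeneous} subcoalgebras. The key point is that homogeneous subcoalgebras are cofinal: given a finite-dimensional $D\subseteq\bar C$, let $D'$ be the linear span of the homogeneous components of all elements of $D$. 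Then $D'$ is still finite-dimensional, and it is a subcoalgebra because the comultiplication is homogeneous, so extracting the degree-$n$ part of the inclusion $\Delta(D)\subseteq D\otimes D$ gives $\Delta(D')\subseteq D'\otimes D'$. As $D\subseteq D'$, every open ideal of $A^u$ contains one of the form $L^u$, so these generate the topology.

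For the radical I would use this cofinality to reduce to finite dimensions: writing $A^u=\invlim_L A/L$ over graded ideals $L$ of finite codimension, the radical is continuous, $\rad(A^u)=\invlim_L\rad(A/L)$ and similarly $\rad A=\invlim_L\rad^{g}(A/L)$ for the graded radicals, so it suffices to show $\rad(B)=\rad^{g}(B)$ for a finite-dimensional graded algebra $B=A/L$. The inclusion $\rad^{g}(B)\subseteq\rad(B)$ is easy, since the graded radical of an Artinian graded ring is nilpotent. The reverse inclusion — equivalently, that $B^u$ is semisimple whenever $B$ is graded-semisimple — is the main obstacle, and is where the $\ZZ$-grading is essential. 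I would obtain it from the observation that a finite-dimensional $\ZZ$-graded division algebra over $k$ must be concentrated in degree $0$ (a nonzero homogeneous element of nonzero degree would be a unit with infinitely many linearly independent powers, contradicting finite-dimensionality); by graded Wedderburn theory a finite-dimensional graded-simple algebra is then simple after forgetting the grading, whence graded-semisimple implies semisimple. Alternatively this follows from the homogeneity of the Jacobson radical of a $\ZZ$-graded ring, which for infinite $k$ is immediate from the invariance of $\rad(B)$ under the $k^{\ast}$-action $r\mapsto\sum_n\lambda^n r_n$ together with a Vandermonde argument, and in general is Bergman's theorem. Granting $\rad(B)=\rad^{g}(B)$ at each finite level, passing to the inverse limit yields $(\rad A)^u=\rad(A^u)$.
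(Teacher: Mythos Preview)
Your argument is correct and follows the same strategy the paper sketches in one line (``apply $\DD$ to the standard dual statement for coalgebras''): you dualize to the graded coalgebra $C=\DD A$, and the three assertions become (i) $\bigoplus_i C_i$ is a coalgebra, (ii) finite-dimensional graded subcoalgebras are cofinal among all finite-dimensional subcoalgebras, and (iii) the coradical of the ungraded coalgebra coincides with that of the graded one. You supply the details the paper leaves implicit. The only minor difference is that for (iii) you cross back to the algebra side and invoke Bergman's theorem (or the observation that a finite-dimensional $\ZZ$-graded division algebra lives in degree~$0$), whereas the ``standard'' coalgebra statement is that the coradical of a graded coalgebra is automatically graded; these are of course equivalent under $\DD$, so this is a matter of taste rather than a different method.
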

\begin{proof} This follows by applying $\DD $ to the standard dual statement for coalgebras. 
\end{proof}

If $A$ is a pseudo-compact $k$-algebra then the category of left
pseudo-compact $A$-modules is denoted by $\PC(A)$. The category of
pseudo-compact $A$-bimodules is usually denoted by $\PC(A^e)$ with
$A^e=A\otimes_k A^\circ$. We use similar concepts and notations in the
graded context.

A \emph{DG-algebra $A$ over $\PC(k)$ is said to be pseudo-compact} if it is
pseudo-compact as a graded algebra and the differential is
continuous. DG-(bi)modules are defined similarly. 
\begin{remark} The topology on a pseudo-compact DG-algebra $A$ has 
a basis given by twosided DG-ideals of finite codimension. To
see this consider the dual statement for coalgebras. This says
that any DG-coalgebra $(C,d)$ should be the union of finite dimensional DG-coalgebras.

Indeed if $C'$ is a finite dimensional graded sub-coalgebra in $C$
then $C'+dC'$ is a finite dimensional sub-DG-coalgebra.

Similar comments can be made about left modules and bimodules. 
\end{remark}
\section{Duality for modules/bimodules}
\label{ref-5-3}
Below we will fix a finite dimensional separable $k$-algebra $l$ which
will be used throughout as a ground ring.  As~$l$ is non-commutative
this creates some technical problems with duals.  As pointed out in
\cite{BSW} there are 4 sensible ways to define the dual of an
$l$-bimodule.  However as shown in loc.\ cit.\ these can all be
identified by fixing a \emph{trace} on $l$, that is, a $k$-linear map
$\Tr:l\r k$ such that the bilinear form $(a,b)\mapsto \Tr(ab)$ is
symmetric and non-degenerate.

For $V\in \Mod(k)$ put $V^\ast=\Hom_k(V,k)$.  By functoriality it is
clear that $(-)^\ast$ sends left $l$-modules to right $l$-modules and
vice versa.  The challenge is to make the 
$(-)^\ast$-operation compatible with the monoidal structure given by
the tensor product over $l$. It is explained in \cite{BSW}
how to do this.

For the rest of this paper we fix a trace $\Tr:l\r k$ 
and we let $\sigma=\sigma'\otimes
\sigma''\in l\otimes_k l$ be the corresponding Casimir element. Note that  $\Tr$ is unique up to multiplication with a central unit in $l$.
The following properties will be used frequently: for $a\in l$
we have 
\begin{align*}
\sigma'\otimes \sigma''&=\sigma''\otimes \sigma'\\
a\sigma'\otimes \sigma''&=\sigma'\otimes \sigma'' a
\end{align*}
We define morphisms of $l$-bimodules
\[
\pi:l^\ast\r l:\pi(\phi)=\sigma'\phi(\sigma'')
\]
\[
c_{W,U}:W^\ast\otimes_l U^\ast\r (U\otimes_l
W)^\ast:\phi\otimes\theta\mapsto (u\otimes w\mapsto
\theta(u\sigma')\phi(\sigma''w))
\]
for $U$ a right $l$-module and $W$ a left $l$-module. The latter
morphism is natural in $U$ and $W$. We claim that these maps are
compatible with tensor product as expressed in the following lemma.
\begin{lemma}
$\pi$ is a bimodule isomorphism  and furthermore
the following  diagrams are commutative
\[
\xymatrix{%
  W^\ast\otimes_l l^\ast \ar[r]^{c_{W,l}}:\ar[d]_{\Id_{W^\ast}\otimes \pi} &(l\otimes_l W)^\ast\ar@{=}[d]\\
  W^\ast\otimes_l l\ar@{=}[r] &W^\ast
 } 
\qquad \xymatrix{%
  l^\ast\otimes_l U^\ast\ar[r]^{c_{l,U}}\ar[d]_{\pi\otimes\Id_{U^\ast}} &( U\otimes_ll)^\ast\ar@{=}[d]\\
  l \otimes_l U^\ast\ar@{=}[r]& U^\ast
}
\]
In addition if  $V$ is an $l$-bimodule then the following diagram is commutative
\[
\xymatrix{ W^\ast\otimes_l V^\ast\otimes_l U^\ast \ar[d]_{\Id_{W^\ast}\otimes c_{V,U}}
\ar[rr]^{c_{W,V}\otimes\Id_{U^\ast}} 
&&
  (V\otimes_l W)^\ast\otimes_l U^\ast\ar[d]^{c_{V\otimes_l W,U}} 
\\
  W^\ast\otimes_l
  (U\otimes_l V)^\ast\ar[rr]_{c_{W,U\otimes_l V}}&& (U\otimes_l V\otimes_l W)^\ast }
\]
\end{lemma}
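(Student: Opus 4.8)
The plan is to treat all three assertions as explicit computations in the Sweedler notation for the Casimir element $\sigma=\sigma'\otimes\sigma''$, using only the two displayed properties: the symmetry $\sigma'\otimes\sigma''=\sigma''\otimes\sigma'$ and the balance relation $a\sigma'\otimes\sigma''=\sigma'\otimes\sigma''a$. Before touching the diagrams I would first check that the maps themselves are well posed. That $c_{W,U}$ descends to the tensor product over $l$ amounts to comparing $c_{W,U}(\phi a\otimes\theta)(u\otimes w)=\theta(u\sigma')\phi(a\sigma''w)$ with $c_{W,U}(\phi\otimes a\theta)(u\otimes w)=\theta(u\sigma'a)\phi(\sigma''w)$, and these agree by the form $\sigma'\otimes a\sigma''=\sigma'a\otimes\sigma''$ of the balance relation (obtained from the displayed one together with the symmetry); the same relation shows that $c_{W,U}(\phi\otimes\theta)$ is $l$-balanced on $U\otimes_l W$. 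Finally $\pi$ is a morphism of $l$-bimodules: directly $\pi(a\phi)=\sigma'\phi(\sigma''a)=a\sigma'\phi(\sigma'')=a\pi(\phi)$, and the right-hand identity is symmetric, using that $\phi(\sigma'')$ is a scalar and hence central.

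To see that $\pi$ is an isomorphism I would exhibit its inverse explicitly, and this is the one place where the separability of $l$ (equivalently the non-degeneracy of the form $\Tr(ab)$) genuinely enters, rather than formal manipulation. Writing $\sigma=\sum_i e_i\otimes f_i$ for dual bases with $\Tr(e_if_j)=\delta_{ij}$, non-degeneracy yields the reproducing identity $\sigma'\Tr(a\sigma'')=a$ for all $a\in l$. Hence the candidate inverse $\psi\colon l\to l^\ast$, $\psi(a)=\Tr(a\,\cdot\,)$, satisfies $\pi(\psi(a))=\sigma'\Tr(a\sigma'')=a$, and the reverse composite $\psi(\pi(\phi))$ evaluated at $x$ gives $\phi(\sigma'')\Tr(\sigma'x)=\phi(x)$ after the same reproducing identity. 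Thus $\pi$ is bijective, and being a bimodule map by the previous paragraph it is a bimodule isomorphism.

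For the two unit triangles I would simply evaluate both legs. Along $c_{W,l}$ followed by the canonical identification $(l\otimes_l W)^\ast\cong W^\ast$, the element $\phi\otimes\theta$ is sent to the functional $w\mapsto\theta(\sigma')\phi(\sigma''w)$, whereas along $\Id_{W^\ast}\otimes\pi$ followed by $W^\ast\otimes_l l\cong W^\ast$ it is sent to $w\mapsto\theta(\sigma'')\phi(\sigma'w)$. These two expressions coincide by the symmetry $\sigma'\otimes\sigma''=\sigma''\otimes\sigma'$, which is precisely the commutativity of the left triangle; the right triangle is identical after exchanging the roles of the two tensor slots.

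The associativity square is the most notation-heavy but conceptually the softest step. Here I would introduce two independent copies $\sigma=\sigma'\otimes\sigma''$ and $\tau=\tau'\otimes\tau''$ of the Casimir element and chase $\phi\otimes\chi\otimes\theta$ around both paths, evaluating the resulting functionals on $u\otimes v\otimes w\in U\otimes_l V\otimes_l W$. Tracking which outer factor each copy acts on—one copy coupling $U$ to $V$ and the other coupling $V$ to $W$, with the two roles interchanged between the two paths—both composites reduce to the single sum $\theta(u\sigma')\,\chi(\sigma''v\tau')\,\phi(\tau''w)$ up to interchanging the independent copies $\sigma$ and $\tau$; since these are merely summation indices the two expressions are literally equal, so the square commutes termwise with no further relation required. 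The only real hazard throughout is bookkeeping: keeping straight the left/right $l$-module conventions on the various duals and making sure each $\sigma$ or $\tau$ is inserted against the correct tensor factor. Once the conventions are pinned down the verifications are mechanical, the sole substantive input being the non-degeneracy used for $\pi$.
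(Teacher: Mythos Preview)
Your proposal is correct. The paper states this lemma without proof, treating it as a routine verification; your direct computation with the Casimir element---checking well-definedness over $l$, exhibiting $\psi(a)=\Tr(a\,\cdot\,)$ as the inverse of $\pi$, and chasing both diagrams to the common expressions $\theta(\sigma')\phi(\sigma''w)$ and $\theta(u\sigma')\chi(\sigma''v\tau')\phi(\tau''w)$---is exactly the intended verification.
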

The maps $\pi$ and $c_{??}$ are compatible with the canonical linear maps $V\r V^{\ast\ast}$
in the following sense.
\begin{lemma}
The following diagrams are commutative. 
\[
\xymatrix{
l^\ast\ar[rr]^{\pi^\ast}\ar[dr]_\pi&&l^{\ast\ast}\\
&l\ar[ur]&
}
\qquad
\xymatrix{
U^{\ast\ast}\otimes_l W^{\ast\ast}\ar[rr]^{c_{U^\ast,W^\ast}} &&(W^\ast\otimes_l U^\ast)^\ast
\\
U\otimes_l W\ar[u]\ar[rr]&&(U\otimes_l W)^{\ast\ast}\ar[u]^{c_{W,U}^\ast}
}
\]
\end{lemma}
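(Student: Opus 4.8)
The plan is to verify each square by a direct element chase, and to observe that in both cases commutativity collapses to the symmetry $\sigma'\otimes\sigma''=\sigma''\otimes\sigma'$ of the Casimir element. Throughout I will use that the canonical map $V\to V^{\ast\ast}$ sends $v$ to the evaluation functional $\phi\mapsto\phi(v)$, and that $\pi^\ast$ and $c_{W,U}^\ast$ are the $k$-linear duals of $\pi$ and $c_{W,U}$, i.e.\ precomposition with those maps.

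For the triangular diagram I would take $\xi\in l^\ast$. Going down-then-up, $\pi(\xi)=\sigma'\xi(\sigma'')\in l$, which the canonical map sends to the functional on $l^\ast$ given by $\phi\mapsto\phi\bigl(\sigma'\xi(\sigma'')\bigr)=\phi(\sigma')\,\xi(\sigma'')$. Going along the top edge, $\pi^\ast(\xi)=\xi\circ\pi$ is the functional $\phi\mapsto\xi(\pi(\phi))=\xi(\sigma')\,\phi(\sigma'')$. These two values are obtained by pairing $\xi\otimes\phi$ against $\sigma''\otimes\sigma'$ and against $\sigma'\otimes\sigma''$ respectively, so they agree by the symmetry of $\sigma$ and the triangle commutes.

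For the square I would chase a decomposable tensor $u\otimes w\in U\otimes_l W$. Along the bottom edge the canonical map produces evaluation at $u\otimes w$, and composing with $c_{W,U}^\ast$ gives the functional on $W^\ast\otimes_l U^\ast$ sending $\phi\otimes\theta\mapsto c_{W,U}(\phi\otimes\theta)(u\otimes w)=\theta(u\sigma')\,\phi(\sigma''w)$. Along the left edge, $u\otimes w$ maps to the elementary tensor of evaluation functionals in $U^{\ast\ast}\otimes_l W^{\ast\ast}$; applying $c_{U^\ast,W^\ast}$ and unwinding the $l$-module structures on the duals, namely $(\phi\sigma')(w)=\phi(\sigma'w)$ on the right $l$-module $W^\ast$ and $(\sigma''\theta)(u)=\theta(u\sigma'')$ on the left $l$-module $U^\ast$, yields the functional $\phi\otimes\theta\mapsto\phi(\sigma'w)\,\theta(u\sigma'')$. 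Setting $F(a,b)=\theta(ua)\,\phi(bw)$, the bottom path gives $F(\sigma',\sigma'')$ and the left path gives $F(\sigma'',\sigma')$, and these coincide by the symmetry of the Casimir element.

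I expect the only real obstacle to be the bookkeeping rather than any genuine computation: one must keep straight which of $U^\ast,W^\ast$ is a left and which a right $l$-module, specialise the general formula for $c_{?,?}$ to $c_{U^\ast,W^\ast}$ with the slots matched correctly, and interpret $\pi^\ast$, $c_{W,U}^\ast$ as duals in the right variance. Once the module actions are written out as above, nothing remains but to invoke $\sigma'\otimes\sigma''=\sigma''\otimes\sigma'$; in particular the preceding lemma on compatibility of $\pi$ and $c_{??}$ with tensor products is not needed for this statement.
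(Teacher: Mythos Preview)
Your verification is correct. The paper states this lemma without proof, treating it as a routine check; your element chase is exactly the kind of direct computation one would supply, and in both diagrams the commutativity indeed reduces to the symmetry $\sigma'\otimes\sigma''=\sigma''\otimes\sigma'$ of the Casimir element, precisely as you identify.
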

We will use these results mostly in the following sense.
\begin{proposition}
  The contravariant functors denoted by $\DD$ define inverse dualities as
  monoidal categories between $\PC(l^e)$ and $\Mod(l^e)$. These
  dualities are compatible with the appropriate actions on categories
  of left and right $l$-modules.
\end{proposition}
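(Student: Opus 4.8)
The plan is to package the two preceding lemmas into the statement that the $k$-linear duality $\DD$ of \eqref{ref-4.1-1} lifts to a monoidal duality once both sides are equipped with the tensor product $\otimes_l$. I would begin at the level of underlying abelian categories. For $V\in\Mod(l^e)$ the $k$-dual $\DD V=V^\ast$ carries a pseudo-compact $l$-bimodule structure obtained by transposing the two actions (the transpose of the left action of $l$ on $V$ is a right action on $V^\ast$, and vice versa), and symmetrically for $W\in\PC(l^e)$ using the continuous dual. Since $\DD$ is already an inverse duality between $\PC(k)$ and $\Mod(k)$, it suffices to observe that the canonical map $V\to V^{\ast\ast}$ is a morphism of $l$-bimodules and is an isomorphism; both are immediate, and their compatibility with the monoidal data is exactly what the second preceding lemma records. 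This yields the duality of underlying categories and identifies the two $\DD$'s as mutually inverse.

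For the monoidal structure, the natural transformation $c_{W,U}\colon \DD W\otimes_l\DD U\to\DD(U\otimes_l W)$ together with the unit comparison $\pi\colon\DD l=l^\ast\to l$ furnish the coherence data of a (contravariant) strong monoidal functor. The associativity coherence is precisely the large commutative square of the first preceding lemma, while the two unit coherences are its first two diagrams; the naturality of $c_{W,U}$ is already asserted there. Combined with the double-dual compatibilities of the second preceding lemma, this shows that $\DD$ and its inverse are inverse to one another \emph{as monoidal functors}, which is the main assertion.

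Two points still need genuine verification. First, that $c_{W,U}$ is an \emph{isomorphism} and not merely natural. In the finite-dimensional case this is a direct computation: both sides have equal finite dimension and injectivity of $c_{W,U}$ follows from the non-degeneracy of the form $(a,b)\mapsto\Tr(ab)$, equivalently from the defining identities of the Casimir element $\sigma$. The general case is then obtained by writing objects of $\Mod(l^e)$ as filtered colimits, and objects of $\PC(l^e)$ as filtered inverse limits, of finite-dimensional bimodules, and invoking that $\DD$ exchanges these (co)limits while the completed $\otimes_l$ commutes with them; here the separability of $l$ is essential, guaranteeing that $\otimes_l$ is exact and that the completed tensor is computed as the inverse limit of its finite-dimensional quotients. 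Second, the compatibility with one-sided actions is inherited from the same coherence diagrams: specializing $U$ or $W$ to a one-sided module, $c_{W,U}$ and $\pi$ restrict to the comparison isomorphisms exhibiting the dualities $\PC(l)\leftrightarrow\Mod(l^\circ)$ and $\PC(l^\circ)\leftrightarrow\Mod(l)$ as compatible with the bimodule action, the required coherences being the specialized diagrams.

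I expect the only real obstacle to be the first verification in the pseudo-compact setting, namely matching the completed tensor product $\DD W\otimes_l\DD U$ with the continuous dual $(U\otimes_l W)^\ast$ so that $c_{W,U}$ remains an isomorphism after completion. It is precisely here that the finiteness and separability of $l$ are used to reduce the statement to the transparent finite-dimensional computation.
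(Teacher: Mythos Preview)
The paper gives no explicit proof of this proposition: it is stated immediately after the two lemmas with the transitional sentence ``We will use these results mostly in the following sense,'' making clear that it is intended as a repackaging of those lemmas. Your proposal correctly identifies this and carries out the unpacking in detail---using $c_{W,U}$ and $\pi$ as the strong monoidal coherence data, the first lemma for unit and associativity coherence, and the second lemma for compatibility with the biduality---so your approach is exactly what the paper has in mind, only made explicit. Your additional care in checking that $c_{W,U}$ is an isomorphism (first in finite dimensions, then by passing to (co)limits using separability of $l$) fills in a point the paper leaves entirely to the reader.
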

For an $l$-bimodule $U$ we define $U_l=U/[l,U]$ and we let $U^l$ be the $l$-centralizer
in $U$. It is easy to see that the map
\begin{equation}
\label{ref-5.1-4}
(-)^{\S}:U_l\r U^l:m\mapsto \sigma' m \sigma''
\end{equation}
is an isomorphism of $k$-vector spaces. We denote its inverse by $(-)^{\dagger}$, 

The operations $(-)_l$, $(-)^l$ are compatible with dualizing in the sense that we have
canonical identifications
\[
(U_l)^\ast=(U^\ast)^l\qquad (U^l)^\ast=(U^\ast)_l
\]
Let $U,V$ be $l$-bimodules. Then 
\[
U\otimes_{l^e} V\cong (U\otimes_l V)_l
\]
We have a canonical ``flip'' isomorphism
\begin{equation}
\label{ref-5.2-5}
\ss:(U\otimes_{l} V)_l\r (V\otimes_l U)_l:m\otimes n\mapsto n\otimes m
\end{equation}
We would like to understand how this plays with duality. We have a map
\[
(U^\ast\otimes_l V^\ast)_l\xrightarrow{c_{U,V}} ((V\otimes_l U)^{\ast})_{l}\cong ((V\otimes_l U)^l)^\ast
\]
Now it is not so clear from this how the flip acts on $((V\otimes_l U)^l)^\ast$. Another
way of saying this is that if we look at the pairing derived from $c_{U,V}$
\begin{equation}
\label{ref-5.3-6}
(U^\ast\otimes_l V^\ast)_l\times (V\otimes_l U)^l:(\phi\otimes \theta,v\otimes u)
\mapsto \phi(u\sigma')\theta(\sigma'' v)
\end{equation}
then we see that exchanging $\phi$ and $\theta$ is not compensated for by simply exchanging
$u$ and $v$.

However if we compose further
\[
(U^\ast\otimes_l V^\ast)_l\r ((V\otimes_l U)^l)^\ast\xrightarrow{(-)^{\S\ast}} ((V\otimes_l U)_l)^\ast
\]
then one verifies that the following diagram is commutative
\[
\xymatrix{
  (U^\ast\otimes_l V^\ast)_l\ar[r]\ar[d]_\ss &((V\otimes_l U)_l)^\ast\ar[d]^{\ss^\ast}\\
  (V^\ast\otimes_l U^\ast)_l\ar[r] &((U\otimes_l V)_l)^\ast
}
\]
Another way of saying this is that we have a well defined pairing
\begin{equation}
\label{ref-5.4-7}
 (U^\ast\otimes_l V^\ast)_l\times (V\otimes_l U)_l:(\phi\otimes \theta,v\otimes u)
\mapsto \phi(\sigma'_2 u\sigma'_1)\theta(\sigma''_1 v\sigma''_2)
\end{equation}
which is now symmetric under simultaneously exchanging $(\phi,\theta)$ and $(u,v)$. 
\section{Homological algebra in the pseudo-compact case}
\label{ref-6-8}
An \emph{augmented $l$-algebra}  is an algebra $A$ equipped with
$k$-algebra homomorphisms $l\xrightarrow{\eta} A\xrightarrow{\epsilon}
l$ such that $\epsilon\eta$ is the identity. We have a corresponding
decomposition $A=l\oplus \bar{A}$ where $\bar{A}=\ker \epsilon=\coker \eta$.
Note that an augmented $l$-algebra is entirely determined by
the algebra structure on $\bar{A}$ (which is $l$-linear but likely without unit).

Obviously
there are similar concepts in the graded and DG-setting as well as in
the coalgebra setting. 

Below we use the notation $\Alg(l)$ for the category of augmented
$l$-DG-algebras and $\Cog(l)$ for the category of augmented
$l$-DG-coalgebras.  An object $C$ in $\Cog(l)$ is said to be
\emph{cocomplete} if $l$ identifies with the coradical of $C$, i.e.\ the sum
of all simple coalgebras\footnote{The coradical is automatically
graded and equates the graded coradical.}
  in $C$. We denote the full
subcategory of $\Cog(l)$ consisting of cocomplete coalgebras by
$\Cogc(l)$. We use the notations $\PCAlg(l)$, $\PCAlgc(l)$, $\PCCog(l)$ for
the corresponding pseudo-compact notions, the definition of which is dual to
those of the non-topological notions (we can deduce their definition by applying the
functor~$\DD$). For example an $l$-DG-algebra $A\in \PCAlg(l)$
is in $\PCAlgc(l)$ iff $\bar{A}=\rad A$. 

For $C\in \Cog(l)$ let $\DGComod(C)$ be the  category of
$l$-DG-comodules over $C$. For $A\in \Alg(l)$ let $\DGMod(A)$ be the
category of DG-modules over $A$.  Write $\PCDGComod(C)$ and $\PCDGMod(A)$ for
the corresponding pseudo-compact notions. 

\medskip

All these categories are equipped with model structures and connected
with each other through Quillen equivalences \cite{hinich,
  Keller12,Lefevre,Positselski}. See Appendix \ref{ref-A-82}
for a survey and further details. Let $A\in \PCAlgc(l)$. The model structure on $\PCDGMod(A)$ is dual to \cite[\S8.2]{Positselski}.
One has
\begin{enumerate}
\item The \emph{weak equivalences} are the morphisms with an acyclic cone. 
\item The \emph{cofibrations} are the injective morphisms with cokernel which is
  projective when forgetting the differential.
\item The \emph{fibrations} are the surjective morphisms.
\end{enumerate}
An object is \emph{acyclic} if it is in the smallest subcategory of the homotopy category
of~$A$ which contains the total complexes of short exact sequences and is closed
under arbitrary products. There is also a characterization 
of weak equivalences in terms of the bar construction. See Appendix \ref{ref-A.4-97}.

A weak equivalence between objects in $\PCAlgc(l)$ is
strictly stronger than a quasi-isomorphism. A similar statement holds
for weak equivalences in $\PCDGMod(A)$.
However under suitable boundedness assumptions (algebras concentrated in degrees $\le 0$ and modules concentrated in degrees $\le N$) weak equivalence is the same
as quasi-isomorphism (see the dual statements to Proposition \ref{ref-A.1.2-88} and Lemma \ref{ref-A.2.1-92}).

\section{Cyclic homology}
\label{ref-7-9}
\subsection{Mixed complexes and Hochschild/cyclic homology}
\label{ref-7.1-10}
 We recall
that a \emph{mixed complex} is a graded vector space $U$ equipped with maps
$b,B:U\r U$ such that $|b|=1$, $|B|=-1$, $b^2=0$, $B^2=0$,
$bB+Bb=0$ \cite{Loday1}.

The \emph{Hochschild homology} $\HH_\ast(U)$, \emph{negative cyclic homology} $\HC^-_\ast(U)$, 
\emph{periodic cyclic homology} $\HC^{\text{per}}_\ast(U)$  and
\emph{cyclic homology} $\HC_\ast(U)$ of $U$
are defined as the homologies of the following complexes
\begin{align*}
\C(U)&=(U,b)\\
\CC^-(U)&=(U[[u]],b+uB)\\
\CC^{\text{per}}(U)&=(U((u)),b+uB)\\
\CC(U)&=(U((u))/U[[u]],b+uB)[-2]
\end{align*}
where $u$ is a formal variable of degree $+2$.  We see that
cyclic homology is computed as the homology of a \emph{sum total complex}
of a suitable double complex whereas negative cyclic homology
is obtained from a \emph{product total complex}. Periodic cyclic homology
is derived from a total complex which is a mixture between a sum and
a product total complex.

By definition a morphism between mixed complexes $(U,b,B)\r (V,b,B)$ is
 a \emph{quasi-isomorphism} if $(U,b)\r (V,b)$ is a quasi-isomorphism.  The
homotopy category of mixed complexes is obtained by inverting quasi-isomorphisms.
Since both products and sums are exact functors
it is clear that Hochschild homology and the different
variants of cyclic homology remain invariant under quasi-isomorphisms
of mixed complexes.

Let $A$ be a unital $l$-DG-algebra. Then the \emph{Hochschild mixed complex} of
$A$ is $(\C(A),b,B)$ where $(\C(A),b)$ is the (\emph{sum}) total complex of the
standard Hochschild double complex
\begin{equation}
\label{ref-7.1-11}
\cdots \xrightarrow{\partial} (A\otimes_l A\otimes_l A)_l \xrightarrow{\partial} (A\otimes_l A)_l\xrightarrow{\partial} A_l\r 0
\end{equation}
As usual $B$ denotes the Connes differential. The \emph{normalized Hochschild mixed
complex} $(\bar{\C}(A),b,B)$ is \cite[\S2.1.9]{Loday1} the sum total complex of
\[
\cdots \xrightarrow{\partial} (A\otimes_l \bar{A}\otimes_l \bar{A})_l \xrightarrow{\partial} (A\otimes_l \bar{A})_l\xrightarrow{\partial} \bar{A}_l\r 0
\]
for $\bar{A}=A/l$. The ordinary and normalized Hochschild mixed complex
are quasi-isomorphic (see \cite[Prop. 1.6.5]{Loday1}).

The Hochschild homology
of $A$ and the different variants of cyclic homology are obtained
from the mixed Hochschild complex and they are denoted by 
$\HH_\ast(A)$, $\HC^-_\ast(A)$, $\HC^{\text{per}}_\ast(A)$, $\HC_\ast(A)$
respectively. The corresponding complexes are denoted by $\C(A)$, $\CC^-(A)$,
$\CC^{\text{per}}(A)$ and $\CC(A)$.  Below we will also use the
\emph{reduced} versions of these complexes (and their homology) which are obtained by replacing
$\C(A)$ by $\C(A)/\C(l)$.

If $A\in \PCAlgc(l)$ then we define its
Hochschild/cyclic homologies through the pseudo-compact version of the
Hochschild mixed complex which amounts to taking
the \emph{product total complex} of \eqref{ref-7.1-11}. It is explained
in Appendix \ref{ref-B-106} why this is a sensible definition.
\begin{remarks} For a pseudo-compact $k$-algebra the pseudo-compact Hochschild homology
is very different from the ordinary Hochschild homology. The following
example was pointed out long ago to the author by Amnon Yekutieli. 

Let $A=\mathbb{C}[[t]]$. Then the Hochschild homology of $A$ is equal
to $\Tor_\ast^{A^e}(A,A)$. Put $Q=\mathbb{C}((t))$. Then we have
\[
\Tor_\ast^{A^e}(A,A)\otimes_A Q=\Tor_\ast^{Q^e}(Q,Q)
\]
Now $Q$ is simply a field of (very) infinite transcendence degree over $\mathbb{C}$.
By a suitable version of the HKR theorem we obtain
\[
\HH_\ast(A)\otimes_A Q=\bigoplus_i \bigwedge^i\Omega_{Q/\mathbb{C}}
\]
and thus $\HH_i(A)\neq 0$ for all $i$. 
On the other the Hochschild homology of $A$ as pseudo-compact $k$-algebra
is concentrated in degrees $0,1$ and is given by the continuous
version of the HKR theorem. 
\end{remarks}

\subsection{$X$-complexes}
\label{ref-7.2-12}
We now recall the \emph{$X$-complex formalism} as introduced in
\cite{CQ,Quillen2}.  By definition an
\emph{$X$-complex} is a quadruple $\Uscr=(U,V,\partial_0,\partial_1)$ where $U,V$
are DG vector spaces. $\partial_0:U\r V$, $\partial_1:V\r U$ are
graded maps (of degree zero) commuting with the differentials and
$\partial_0\partial_1=0$, $\partial_1\partial_0=0$. An $X$-complex $\mathcal{U}$
has
an \emph{associated mixed complex} $(M(\mathcal{U}),b,B)$ given by
\[
M(\mathcal{U})=\Sigma V\oplus U
\]
\begin{align*}
b&=\begin{pmatrix}
-d & 0\\
\partial_1 & d
\end{pmatrix}\\
B&=\begin{pmatrix}
0 & \partial_0\\
0 & 0
\end{pmatrix}
\end{align*}
The homological invariants of an $X$-complex are those of the associated
mixed complex. 

\medskip

One may associate an $X$-complex to 
a unital $l$-DG-algebra $A$.
Let\footnote{It is unfortunate that the symbol $\Omega$ is used both for differentials
and for the cobar construction....}
\begin{equation}
\label{ref-7.2-13}
\Omega^1 _lA=\ker (A\otimes_l A\xrightarrow{a\otimes b\mapsto ab} A)
\end{equation}
$\Omega^1 _lA$ is the target for the universal $l$-derivation $D:A\r
\Omega_l^1 A$ which sends $a$ to $Da\overset{\text{def}}{=}a\otimes
1-1\otimes a$ (here the assumption that $A$ is unital is used).

For an $A$-bimodule $M$ put $M_\natural=A\otimes_{A^e} M=M/[A,M]$. 
We define $\partial_0:A_l\r (\Omega^1_lA)_\natural$ 
by $\partial_0(\bar{a})=\overline{Da}$ and $\partial_1:(\Omega^1_lA)_\natural\r A_l$ 
by $\partial_1(\overline{aDb})=\overline{[a,b]}$.

Both $\partial_0$ and $\partial_1$ commute with the differentials
inherited from the DG-structure on $A$.  Furthermore it is easy to see
that $\partial_0\partial_1=0$, $\partial_1\partial_0=0$.  The
$X$-complex $X(A)$ of $A$ is defined as
$(A_l,(\Omega^1_lA)_\natural,\partial_0,\partial_1)$.

There is a canonical morphism of mixed
complexes
\begin{equation}
\label{ref-7.3-14}
\sigma:C(A)\r M(X(A))
\end{equation}
defined as follows 
\[
\xymatrix{ \cdots \ar[r] & (A^{\otimes_l 4})_l \ar[r]^\partial\ar[d] &
  (A^{\otimes_l 3})_l \ar[r]^\partial \ar[d]
& 
(A^{\otimes_l 2})_l \ar[r]^\partial\ar[d]^{
  \begin{smallmatrix}a\otimes b\\\downarrow \\a Db\end{smallmatrix} } & A_l \ar[r]\ar@{=}[d] & 0
\\
\cdots \ar[r] & 0 \ar[r] & 0\ar[r] &(\Omega^1_lA)_\natural\ar[r]_{\partial_1} & A_l \ar[r] &0
}
\]
The following is well known.
\begin{propositions} \label{ref-7.2.1-15} 
\cite{CQ} Assume that $A$ is a cofibrant  \cite[\S1.3]{Lefevre} $l$-DG-algebra. Then
  $\sigma$ is a quasi-isomorphism of mixed complexes.
\end{propositions}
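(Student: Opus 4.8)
The plan is to reduce the statement to the classical Cuntz--Quillen fact that for a quasi-free (cofibrant) algebra the module of noncommutative differentials provides a length-one bimodule resolution, so that the two-term $X$-complex already computes Hochschild homology. Since a morphism of mixed complexes is declared a quasi-isomorphism exactly when the underlying map of $b$-complexes is one, it suffices to show that $\sigma$ induces a quasi-isomorphism on the Hochschild ($b$-)complexes; the Connes differential $B$ plays no role.

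First I would observe that the defining equation \eqref{ref-7.2-13} sits in a short exact sequence of DG $A$-bimodules
\[
0\to \Omega^1_lA \to A\otimes_l A \xrightarrow{\mu} A\to 0,
\]
so that $P_\bullet=[\,\Omega^1_lA \to A\otimes_l A\,]$, placed in homological degrees $1,0$, is a two-term bimodule resolution of $A$. Applying $A\otimes_{A^e}(-)=(-)_\natural$ to $P_\bullet$ and using $(A\otimes_l A)_\natural = A_l$ yields precisely the complex $(\Omega^1_lA)_\natural \xrightarrow{\partial_1} A_l$, which is the $b$-complex underlying $M(X(A))$. Dually, the reduced bar construction gives a semifree bimodule resolution $\operatorname{Bar}(A)\to A$ whose image under $(-)_\natural$ is the normalized Hochschild complex. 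The chain map $\sigma$ is then obtained by applying $(-)_\natural$ to the evident comparison map of bimodule resolutions $\operatorname{Bar}(A)\to P_\bullet$, which is the identity in degree $0$, sends $a\otimes b\otimes c\mapsto a\,(Db)\,c=ab\otimes c-a\otimes bc\in\Omega^1_lA$ in degree $1$, and is zero in higher degrees; one checks readily that this is a morphism of bimodule complexes over the common augmentation to $A$, and that after $(-)_\natural$ the degree-one map becomes $a\otimes b\mapsto a\,Db$ as in the displayed diagram.

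The point at which cofibrancy enters --- and the only real content --- is the claim that $\Omega^1_lA$ is DG-projective (semifree) as an $A$-bimodule. For a free DG-algebra $A=T_l(V)$ one has $\Omega^1_lA\cong A\otimes_l V\otimes_l A$, which is manifestly free; the general cofibrant case follows because a cofibrant $l$-DG-algebra is a retract of a standard cofibration built by transfinite composition of pushouts along generating cofibrations, and $\Omega^1_l(-)$ carries such a presentation to a bimodule admitting an exhaustive filtration with free subquotients. Granting this, both $P_\bullet$ and $\operatorname{Bar}(A)$ are semiprojective resolutions of $A$ (here one uses that $A\otimes_l A$ is free and that the total complex of a bounded complex of DG-projectives is DG-projective), the comparison map between them is a quasi-isomorphism of semiprojective DG-bimodules and hence a homotopy equivalence, and the additive functor $(-)_\natural$ preserves homotopy equivalences. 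Therefore $\sigma$ is a quasi-isomorphism of $b$-complexes, and thus a quasi-isomorphism of mixed complexes.

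The main obstacle is precisely the projectivity of $\Omega^1_lA$ over $A^e$: away from the cofibrant hypothesis it fails (e.g.\ for a smooth commutative algebra of dimension $>1$ the Hochschild homology is not concentrated in the two lowest degrees), so the argument genuinely rests on the model-categorical characterization of cofibrant objects. I would therefore isolate this projectivity as the key lemma and prove it by the cell-attachment and retract argument sketched above, after which the comparison of resolutions and the passage to mixed complexes are formal.
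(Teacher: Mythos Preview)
Your approach is correct and follows the same skeleton as the paper's: construct the two-term bimodule resolution $P_\bullet=[\Omega^1_lA\to A\otimes_lA]$, observe that cofibrancy of $A$ forces $\Omega^1_lA$ to be a cofibrant bimodule, build the explicit comparison $\operatorname{Bar}(A)\to P_\bullet$ over $A$, and pass to $(-)_\natural$. The paper in fact writes down exactly your degree-one map $a\otimes b\otimes c\mapsto a(Db)c$ and notes the same point about $\Omega^1_lA$ being cofibrant.

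The divergence is in the last step. You argue that both $\operatorname{Bar}(A)$ and $P_\bullet$ are K-projective (semifree) DG-bimodules, so the quasi-isomorphism between them is a homotopy equivalence and survives the non-exact functor $(-)_\natural$. The paper instead applies $(-)_\natural$ first, obtaining a morphism of double complexes that is a quasi-isomorphism on rows, explicitly flags that one \emph{cannot immediately} conclude the sum total complex is acyclic, and then resolves this by using the cell filtration $0=F_0V\subset F_1V\subset\cdots$ on $V$ (coming from cofibrancy of $A=T_lV$) to reduce to the case where the internal differential on $A$ vanishes, where the row-wise quasi-isomorphism trivially passes to total complexes. Your route is cleaner and more conceptual, avoiding the spectral-sequence/convergence issue entirely by working at the level of bimodule resolutions; the paper's route is more elementary in that it never needs to verify K-projectivity of the bar resolution, trading that for a direct filtration argument. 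Both are valid, and your cell-attachment justification for the cofibrancy of $\Omega^1_lA$ is precisely the detail the paper asserts without proof.
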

\begin{proof} 
We may assume that  $A=T_l V$ where $V$ is equipped with an ascending filtration
$0=F_0V\subset F_1V\subset\cdots$ such that $d(F_{i+1}V)\subset T_l F_i V$.

We have a short projective resolution of $A$ as a bimodule
\begin{equation}
\label{ref-7.4-16}
0\r \Omega^1_lA\xrightarrow{\partial_1} A\otimes_l A\r A\r 0
\end{equation}
and from the fact that $A$ is cofibrant one deduces that $\Omega^1_lA$ is a cofibrant
$A$-bimodule. For further reference we note that here $\partial_1$  is given by
\[
\partial_1(aDb)=ab\otimes 1-a\otimes b
\]
Comparing this with the usual resolution given by the shifted bar complex we get
a morphism between these resolutions
\[
\xymatrix{
\cdots \ar[r] & A^{\otimes_l 4} \ar[r]\ar[d] & A^{\otimes_l 3} \ar[r] \ar[d]^{
  \begin{smallmatrix}a\otimes b\otimes c\\\downarrow \\a (Db)c\end{smallmatrix} }& 
A^{\otimes_l 2} \ar[r]\ar@{=}[d] & A \ar[r]\ar@{=}[d] & 0
\\
\cdots \ar[r] & 0 \ar[r] &\Omega^1_lA\ar[r]& A^{\otimes_l 2} \ar[r] & A \ar[r] &0
}
\]
Applying $A\otimes_{A^e}-$ we get a morphism of double complexes which
is a quasi-isomorphism on the row level (with the $A$-differential oriented
vertically). We cannot immediately conclude that the sum total complex
is acyclic. However putting a suitable filtration on $T_l V$ we may
reduce to the case that the differential on $A$ is zero and then it works.
\end{proof}
The $X$-complex formalism generalizes without difficulty to the case
that $A\in \PCAlgc(l)$. Since now the Hochschild complex is a product
total complex the analogue of Proposition \ref{ref-7.2.1-15} is valid
under the weaker assumption that $A$ is a tensor algebra when
forgetting the differential. This is explained by the fact that the
latter objects are precisely the cofibrant objects in $\PCAlgc(l)$
(see \S\ref{ref-A.4-97}).

\section{Calabi-Yau algebras}
\label{ref-8-17}
We recall some definitions for a $k$-DG-algebra $A$.
\begin{definition} $A$ is \emph{homologically smooth} if $A$ is a perfect $A$-bimodule.
\end{definition}
For $M\in D(A^e)$ put $M^D=\RHom_{A^e}(M,A^e)$. The functor $(-)^D$ defines an auto duality
on the category of perfect $A^e$-modules. 
\begin{definition} 
\label{ref-8.2-18}
$A$ is \emph{Calabi-Yau of dimension} $d$ 
if  $A$ is homologically smooth and there exists an isomorphism $\eta:A^D\r {\Sigma^{-d}} A$ in the derived category of $A^e$-modules.
\end{definition}
This is a version of Ginzburg's definition
of a Calabi-Yau algebra. See \cite[Definition 3.2.3]{Gi}.  Ginzburg
assumes in addition that $\eta^D=\eta$ but it turns out this is automatic
(see Appendix \ref{ref-C-112}).

Assume that $A$ is a DG-algebra and $M,N$ are
$A$-DG-bimodules with $M$ perfect.  Then in $D(k)$ we have
\begin{equation}
\label{ref-8.1-19}
\RHom_{A^e}(M^D,N)\cong M\Lotimes_{A^e} N
\end{equation}
Indeed to prove this we may assume $M=A\otimes A$ and then it is obvious (see also
\cite[Remark 8.2.4]{KS2}). We say that $\xi\in H_d(M\Lotimes_{A^e} N)$ is
\emph{non degenerate} if the corresponding map $\xi^{+}:M^D\r {\Sigma^{-d}} N$ is an isomorphism
(see Appendix \ref{ref-C-112} for more details).

If follows that the isomorphism $\eta$ in Definition \ref{ref-8.2-18} defines a non-degenerate element
of $\HH_d(A)=H_d(A\Lotimes_{A^e} A)$ and conversely any such non-degenerate element 
of $\HH_d(A)$ defines an isomorphism $A^D\r{\Sigma^{-d}} A$. 

Associated to $A$ there is the Connes long exact sequence for cyclic homology.
\begin{equation}
\label{ref-8.2-20}
\cdots \r \HH_{d+1}(A)\xrightarrow{I} \HC_{d+1}(A)\xrightarrow{S} \HC_{d-1}(A) \xrightarrow{B} \HH_d(A)\xrightarrow{I} \HC_d(A)\xrightarrow{S} \HC_{d-2}(A)\xrightarrow{B} \HH_{d-1}(A)\r \cdots
\end{equation}
The following strengthening of the notion of Calabi-Yau was suggested by Bernhard Keller.
It is similar to a dual notion defined by Kontsevich and Soibelman \cite[\S 10.2]{KS2}.
\begin{definition} 
\label{ref-8.3-21}
$A$ is \emph{exact Calabi-Yau of dimension $d$} if $A$ is
homologically smooth and $\HC_{d-1}(A)$ contains an element $\eta$ such that
$B\eta$ is non-degenerate. 
\end{definition}
These definitions and observations extend without difficulty to the case that
$A\in \PCAlgc(l)$. Following \cite{KY} we say that an object
in $D(A)$ is \emph{strictly perfect} if it is contained in the smallest thick subcategory
of $D(A)$ containing $A$. We say that~$A$ is (topologically) homologically smooth
if $A$ is strictly perfect in $D(A^e)$.

Thanks to the model structure on $\PCDGMod(A^e)$ introduced in
\S\ref{ref-6-8} we may define $\RHom_{A^e}(A,A^e)$ and so
Definition \ref{ref-8.2-18} is meaningful. Furthermore the derivation of
\eqref{ref-8.1-19} is still valid so the concept of a non-degenerate
element in $\HH_d(A)$ (temporarily defined as $H_d(A\Lotimes_{A^e}A)$)
makes sense.

Thanks to Proposition \ref{ref-B.1-107} $H_\ast(A\Lotimes_{A^e}A)$ may be computed using the pseudo-compact Hochschild
complex which is itself a mixed complex. Hence we have the corresponding long exact sequence
for cyclic homology and so Definition \ref{ref-8.3-21} makes sense as well. 

\section{Cyclic homology for pseudo-compact (non DG-)Calabi-Yau algebras}
\label{ref-9-22}
The following  is a pseudo-compact version of Goodwillie's theorem 
\cite[Thm III.5.1]{Goodwillie}.
\begin{theorem}
\label{ref-9.1-23}
Assume that $k$ has characteristic zero
and let $A\in \Algc(l)$ be concentrated in degree $\le 0$. Then
$\HC^{\text{per},\text{red}}_\ast(A)=0$.
\end{theorem}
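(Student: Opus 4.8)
The plan is to reduce to the classical finite-dimensional situation, apply Goodwillie's theorem there, and then pass to an inverse limit; the pseudo-compact set-up is tailored precisely so that this last step goes through. Since $A$ is pseudo-compact with $\bar A=\rad A$, it is the filtered inverse limit $A=\invlim_J A_J$ of its finite-dimensional quotients $A_J:=A/J$, where $J$ runs over the open two-sided DG-ideals of finite codimension (these form a neighbourhood basis of $0$, as in \S\ref{ref-4-0}). The transition maps $A_{J'}\twoheadrightarrow A_J$ (for $J'\subseteq J$) are surjective, and each $A_J$ is a finite-dimensional augmented $l$-DG-algebra concentrated in degrees $\le 0$ with $\rad A_J=\bar{A_J}$.

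First I would dispose of the finite-dimensional quotients. As $A_J$ is finite-dimensional its radical $\bar{A_J}$ is a nilpotent DG-ideal, while $A_J/\bar{A_J}=l$ is separable; thus the augmentation $A_J\r l$ is a surjection with nilpotent kernel. Since $\charact k=0$, Goodwillie's theorem on the invariance of periodic cyclic homology under nilpotent extensions (in its form for DG-algebras) shows that $A_J\r l$ induces an isomorphism $\HC^{\text{per}}_\ast(A_J)\xrightarrow{\cong}\HC^{\text{per}}_\ast(l)$. Equivalently, writing $\Cscr_J:=\CC^{\text{per},\text{red}}(A_J)$ for the reduced periodic complex of $A_J$ (obtained from $\C(A_J)/\C(l)$ as in \S\ref{ref-7.1-10}), each $\Cscr_J$ is acyclic.

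It then remains to pass to the limit. By construction the pseudo-compact Hochschild complex $\C(A)$ is the product total complex of \eqref{ref-7.1-11} formed with completed tensor products, and because $(-)_l=(-)\otimes_{l^e}l$ is exact (as $l$ is separable, hence projective over $l^e$) the completed terms satisfy $(A^{\otimes_l n})_l=\invlim_J (A_J^{\otimes_l n})_l$ with surjective transitions. As forming Laurent series in $u$ commutes with these inverse limits, one obtains an identification of complexes $\CC^{\text{per},\text{red}}(A)=\invlim_J \Cscr_J$, again with degreewise surjective transitions. Hence the system is Mittag-Leffler, and in the short exact sequence
\[
0\longrightarrow \invlim_J \Cscr_J \longrightarrow \prod_J \Cscr_J \xrightarrow{\,1-t\,} \prod_J \Cscr_J \longrightarrow 0
\]
(with $t$ the shift operator of the system) the $\invlim^1$-term vanishes. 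Since each $\Cscr_J$ is acyclic the product $\prod_J \Cscr_J$ is acyclic, so the long exact homology sequence forces $\invlim_J \Cscr_J=\CC^{\text{per},\text{red}}(A)$ to be acyclic, i.e.\ $\HC^{\text{per},\text{red}}_\ast(A)=0$.

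The main obstacle is exactly this last step. Periodic cyclic homology is not continuous in general, so the crux is the identification $\CC^{\text{per},\text{red}}(A)=\invlim_J \Cscr_J$: one must check that combining the \emph{product} total complex with the Laurent-series construction in $u$ does not create a spurious $\invlim^1$-contribution, and that the transition maps remain surjective degreewise so that Mittag-Leffler applies. Once this compatibility of the pseudo-compact definition with the inverse limit is secured, the vanishing for the finite quotients propagates to $A$ automatically.
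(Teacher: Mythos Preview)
Your strategy is the same as the paper's: apply Goodwillie's theorem to quotients of $A$ and then pass to an inverse limit, using the hypothesis that $A$ sits in degrees $\le 0$ to ensure that the periodic complex is a pure product in each degree and hence commutes with $\invlim$. There are two differences worth noting.

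First, you index by \emph{all} open two-sided DG-ideals $J$, so that each $A_J$ is honestly finite-dimensional and the classical Goodwillie theorem applies verbatim. The paper instead uses the $\rad A$-adic filtration $A/I^n$: this has the virtue of being $\mathbb{N}$-indexed, but the quotients $A/I^n$ are in general only pseudo-compact, not finite-dimensional, so the paper must invoke Goodwillie in that generality (it remarks in a footnote that the proof goes through for pseudo-compact DG-algebras in degrees $\le 0$). Your choice buys a cleaner input but a larger index set.

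Second, and this is a genuine wrinkle: the short exact sequence $0\to\invlim_J\Cscr_J\to\prod_J\Cscr_J\xrightarrow{1-t}\prod_J\Cscr_J\to 0$ with a single shift operator $t$ only makes sense for a linearly ordered (e.g.\ $\mathbb{N}$-indexed) tower, and the poset of open ideals need not be countable. So the Mittag-Leffler step, as you have written it, does not literally apply to your system. The paper sidesteps this by working throughout in $\PC(k)$, where filtered inverse limits are \emph{exact} (axiom AB5${}^*$, see \S\ref{ref-4-0}); hence an inverse limit of acyclic complexes in $\PC(k)$ is automatically acyclic, with no $\invlim^1$ bookkeeping required. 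This is precisely the mechanism you flag as the ``main obstacle'' in your final paragraph, and invoking it directly both repairs your argument and makes the Mittag-Leffler discussion unnecessary.
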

\begin{proof}[Proof of Theorem \ref{ref-9.1-23}]
Let $I=\rad A$. We must prove that
\[
\HC^{\text{per}}(A)\r \HC^{\text{per}}(A/I)
\]
is an isomorphism.
As the $I$-adic filtration on $A$ is dual to the coradical filtration
on $\DD A$, all $A/I^n$ are pseudo-compact and furthermore $A=\invlim_n A/I^n$.

The functor $\invlim$ commutes with (completed) tensor product so we also have
\[
\C(A)=\invlim_n \C(A/I^n)
\]
Since $A$ is concentrated in degree $\le 0$ the same is true for $\C(A)$ and
hence the complex $(\C(A)((u)),b+uB)$ computing periodic cyclic homology  (see \S\ref{ref-7.1-10})
involves only products.  Since inverse limits commute with products we get
\[
(\C(A)((u)),b+uB)=\invlim_n (\C(A/I^n)((u)),b+uB)
\]
Now by Goodwillie's theorem \cite[Thm II.5.1]{Goodwillie} for nilpotent
extensions\footnote{The proof of this result extends without difficulty to pseudo-compact
DG-algebras concentrated in degrees $\le 0$}  the morphism of complexes $(\C(A/I^n)((u)),b+uB)\r (\C(A/I)((u)),b+uB)$ is a quasi-isomorphism.
As filtered inverse limits are exact in the category of pseudo-compact
vector spaces we deduce that 
$\invlim_n (\C(A/I^n)((u)),b+uB))\r (\C(A/I)((u)),b+uB)$ is a quasi-isomorphism as well. 
\end{proof}
\begin{corollary}
  \label{ref-9.2-24} Assume that $k$ has characteristic zero and let
  $A\in \PCAlgc(l)$ be concentrated in degree zero. Assume in
  addition
that $A$ is $d$-Calabi-Yau. Then
  $\HC^{\text{red}}_i(A)=0$ for $i\ge d$.
\end{corollary}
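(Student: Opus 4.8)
The plan is to combine the vanishing of reduced periodic cyclic homology furnished by Theorem \ref{ref-9.1-23} with the Calabi-Yau duality on homology and the Connes long exact sequence \eqref{ref-8.2-20}. The guiding idea is that the Calabi-Yau property, together with the concentration of $A$ in degree zero, forces Hochschild homology to be supported in the window $0 \le n \le d$; this makes the periodicity operator $S$ an isomorphism in high degrees, so that there cyclic homology coincides with periodic cyclic homology, and the latter is known to vanish.

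First I would record the Calabi-Yau duality. Applying $H_\ast$ to the isomorphism $A^D \cong \Sigma^{-d} A$ of Definition \ref{ref-8.2-18} and using \eqref{ref-8.1-19} with $M = N = A$ gives an identification $\HH_n(A) \cong \HH^{d-n}(A)$, where $\HH^\ast(A) = \Ext^\ast_{A^e}(A,A)$ and $\HH_\ast(A) = H_\ast(A \Lotimes_{A^e} A)$ is computed by the pseudo-compact Hochschild complex (Proposition \ref{ref-B.1-107}). Since $A$ is concentrated in degree zero, that Hochschild complex lives in non-negative homological degrees, so $\HH_n(A) = 0$ for $n < 0$; dually $\Ext^j_{A^e}(A,A) = 0$ for $j < 0$. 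Substituting $j = d - n < 0$ into the duality yields $\HH_n(A) = 0$ for $n > d$, whence $\HH^{\text{red}}_n(A) = 0$ for $n > d$ as well, the reduction affecting only degree zero because $l$ is separable.

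Next I would insert this vanishing into the reduced form of \eqref{ref-8.2-20}. For $n \ge d$ the four-term piece $\HH^{\text{red}}_{n+2} \xrightarrow{I} \HC^{\text{red}}_{n+2} \xrightarrow{S} \HC^{\text{red}}_n \xrightarrow{B} \HH^{\text{red}}_{n+1}$ has both outer terms zero, so $S : \HC^{\text{red}}_{n+2} \to \HC^{\text{red}}_n$ is an isomorphism for every $n \ge d$. Hence, for each fixed $n \ge d$, the tower $\cdots \xrightarrow{S} \HC^{\text{red}}_{n+4} \xrightarrow{S} \HC^{\text{red}}_{n+2} \xrightarrow{S} \HC^{\text{red}}_n$ consists of isomorphisms, so its inverse limit is $\HC^{\text{red}}_n$ and its $\invlim^1$ vanishes. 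The Milnor exact sequence comparing $\HC^{\text{per}}$ with the $S$-tower of $\HC$ then identifies $\HC^{\text{per},\text{red}}_n(A)$ with $\HC^{\text{red}}_n(A)$ for $n \ge d$. Since $\HC^{\text{per},\text{red}}_\ast(A) = 0$ by Theorem \ref{ref-9.1-23}, we conclude $\HC^{\text{red}}_n(A) = 0$ for $n \ge d$.

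The only genuinely delicate point is the passage from periodic to ordinary cyclic homology, i.e. justifying the Milnor $\invlim/\invlim^1$ comparison in the pseudo-compact framework. It is harmless here precisely because the maps $S$ are isomorphisms above degree $d$, so the tower is essentially constant and no $\invlim^1$ obstruction survives; the remaining care is bookkeeping with the reduced theories and verifying that the duality $\HH_n \cong \HH^{d-n}$ and the Connes sequence are available for $\PCAlgc(l)$, which the constructions of the preceding sections guarantee.
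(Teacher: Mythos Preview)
Your argument is correct and follows the same overall architecture as the paper's proof: show $\HH_n(A)=0$ for $n>d$, feed this into the reduced Connes sequence to make $S$ an isomorphism in degrees $\ge d$, and then identify reduced cyclic with reduced periodic cyclic homology, which vanishes by Theorem~\ref{ref-9.1-23}.

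The one genuine difference is in how the vanishing $\HH_n(A)=0$ for $n>d$ is obtained. You use the Calabi--Yau isomorphism $A^D\cong\Sigma^{-d}A$ together with \eqref{ref-8.1-19} to get the duality $\HH_n(A)\cong\HH^{d-n}(A)$ directly, and then appeal to the obvious degree bounds on Hochschild cohomology. The paper instead passes through $\Ext^\ast_A(l,l)$: the Calabi--Yau property makes this a symmetric algebra with top degree $d$, and a minimal bimodule resolution argument (tensoring with $l$ and applying $\Hom_A(-,l)$) then shows the resolution has length $d$. Your route is arguably more direct; the paper's route has the side benefit of exhibiting the finite-length bimodule resolution explicitly, which is reused elsewhere. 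For the inverse-limit step you invoke a Milnor $\invlim/\invlim^1$ argument and observe that the tower is eventually constant; the paper simply notes that filtered inverse limits are exact in $\PC(k)$, so no $\invlim^1$ term ever arises. Either observation suffices here.
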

\begin{proof} The algebra $\Ext^*_A(l,l)$ is finite dimensional and
  symmetric with an invariant form of degree $d$ (see \cite[Lemma
  3.4]{Keller11}). In particular $\Ext_A^i(l,l)=0$ for $i>d$. 

Consider a minimal free resolution of $A$ as $A^e$-bimodule
\[
\cdots \r F_2\r F_1\r A^e\r A\r 0
\]
Tensoring on the right with $l$ and applying $\Hom_A(-,l)$ we see by Nakayama's
lemma that this resolution has length $d$. Hence $\HH_i(A)=0$ for $i>d$. 

By the reduced version of \eqref{ref-8.2-20} we deduce for $i\ge d$
\begin{equation}
\label{ref-9.1-25}
\HC_i^{\text{red}}(A)=\HC_{i+2}^{\text{red}}(A)=\cdots
\end{equation}
Now 
\begin{equation}
\label{ref-9.2-26}
\HC^{\text{per,red}}_i(A)=\invlim_n \HC_{i+2n}^{\text{red}}(A)
\end{equation}
The proof of this fact is even easier than in the non-pseudo-compact
case (see \cite[Prop.\ 5.1.9]{Loday1}) because inverse
limits are exact for pseudo-compact vector spaces.

Combining \eqref{ref-9.1-25}\eqref{ref-9.2-26} with Theorem \ref{ref-9.1-23} proves what we want. 
\end{proof}
\begin{corollary}
\label{ref-9.3-27}
Assume that $k$ has characteristic zero and let $A\in \PCAlgc(l)$ be
concentrated in degree zero. Then $A$ is $d$-Calabi-Yau if and only if
it is exact $d$-Calabi-Yau.
\end{corollary}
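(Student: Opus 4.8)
The plan is to treat the two implications separately; one is formal and the other carries all the content. First, \emph{exact $d$-Calabi-Yau implies $d$-Calabi-Yau}. This is immediate from the definitions: if $\eta\in\HC_{d-1}(A)$ is such that $B\eta\in\HH_d(A)$ is non-degenerate, then, as explained following Definition~\ref{ref-8.2-18}, the class $B\eta$ defines an isomorphism $A^D\xrightarrow{\cong}\Sigma^{-d}A$ in $D(A^e)$; together with the homological smoothness built into Definition~\ref{ref-8.3-21} this is exactly Definition~\ref{ref-8.2-18}.

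The substance is the converse. Assume $A$ is $d$-Calabi-Yau, so that we are handed a non-degenerate class $c\in\HH_d(A)$. By the Connes exact sequence~\eqref{ref-8.2-20}, exactness at $\HH_d(A)$ reads $\im(B\colon\HC_{d-1}(A)\to\HH_d(A))=\ker(I\colon\HH_d(A)\to\HC_d(A))$, so it suffices to produce an element $\eta\in\HC_{d-1}(A)$ with $B\eta=c$; equivalently, to show $I(c)=0$. The idea is to run this argument in the \emph{reduced} theory, where the required vanishing is supplied by Corollary~\ref{ref-9.2-24}, and then to transport the conclusion back to the non-reduced theory using the augmentation on $A$.

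Concretely, the augmentation $l\to A\to l$ splits the inclusion of mixed complexes $\C(l)\hookrightarrow\C(A)$, so it yields natural direct-sum decompositions $\HH_\ast(A)=\HH_\ast(l)\oplus\HH^{\text{red}}_\ast(A)$ and $\HC_\ast(A)=\HC_\ast(l)\oplus\HC^{\text{red}}_\ast(A)$ compatible with the maps $B$, $I$, $S$. Since $l$ is separable its Hochschild homology is concentrated in degree $0$, so for $d\ge 1$ we have $\HH_d(A)=\HH^{\text{red}}_d(A)$ and $c$ may be regarded as a reduced class. On the other hand Corollary~\ref{ref-9.2-24} gives $\HC^{\text{red}}_d(A)=0$, so in the reduced Connes sequence the map out of $\HH^{\text{red}}_d(A)$ vanishes and $B^{\text{red}}\colon\HC^{\text{red}}_{d-1}(A)\to\HH^{\text{red}}_d(A)$ is surjective. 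Hence $c=B^{\text{red}}(\eta')$ for some $\eta'\in\HC^{\text{red}}_{d-1}(A)$. Lifting $\eta'$ through the split surjection $\HC_{d-1}(A)\twoheadrightarrow\HC^{\text{red}}_{d-1}(A)$ to an element $\eta\in\HC_{d-1}(A)$ and using compatibility of $B$ with the decomposition, we obtain $B\eta=c$ in $\HH_d(A)$. Thus $B\eta$ is non-degenerate and $A$ is exact $d$-Calabi-Yau.

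I expect the main obstacle to be precisely the vanishing input $\HC^{\text{red}}_d(A)=0$, which is Corollary~\ref{ref-9.2-24} and ultimately rests on the Goodwillie-type Theorem~\ref{ref-9.1-23}; granting that, the remaining work is the routine bookkeeping identifying reduced and non-reduced classes through the augmentation splitting. This is also where some care is needed at the boundary: the identification $\HH_d(A)=\HH^{\text{red}}_d(A)$ uses $d\ge 1$, and the degenerate case $d=0$ (which forces $A=l$, and where the only candidate $\eta$ lives in $\HC_{-1}(A)=0$) must be set aside.
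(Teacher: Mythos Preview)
Your proposal is correct and follows essentially the same approach as the paper's proof: both invoke Corollary~\ref{ref-9.2-24} to obtain $\HC^{\text{red}}_d(A)=0$, use the reduced Connes sequence to conclude that $B$ surjects onto $\HH^{\text{red}}_d(A)$, and then lift the resulting preimage from $\HC^{\text{red}}_{d-1}(A)$ to $\HC_{d-1}(A)$. Your write-up is somewhat more explicit than the paper's about the augmentation splitting and the edge case $d=0$, but the argument is the same.
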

\begin{proof} Assume that $A$ is $d$-Calabi-Yau. It follows from
  Corollary \ref{ref-9.2-24} that $HC^{\text{red}}_d(A)=0$.  It now follows from the analogue
  of \eqref{ref-8.2-20} for reduced homology that any $\eta\in
  \HH_d(A)$ lifts to an element of $\HC_{d-1}^{\text{red}}(A)$ and hence
to an element of $\HC_{d-1}(A)$. This
  proves what we want.
\end{proof}

\section{Deformed DG-preprojective algebras}
\label{ref-10-28}
In this section we assume that $k$ has characteristic zero.
We will show that pseudo-compact exact
Calabi-Yau DG-algebras are obtained from superpotentials.  Combining  this
with Corollary \ref{ref-9.3-27} it then follows that pseudo-compact \hbox{(non DG-)}Ca\-labi-Yau 
algebras are derived from superpotentials (and vice versa).

We will give complete proofs but
our arguments
are certainly heavily inspired by
\cite{Gi,KS2,Lazaroiu}. More precise
references will be given below. 

\subsection{A reminder on non-commutative symplectic geometry}
We introduce some notions from
\cite{CBEG,Kosymp}. See also
\cite{LebBock,Ginzburg1,VdB33,VdB36}.  Let $A$ be a graded algebra over
$\PC(l^e)$. Put
\[
\DR_l(A)=T_A(\Omega_l^1 A)/[T_A(\Omega_l ^1A),T_A(\Omega_l ^1A)]
\]
Here $T_A(\Omega^1_l A)$ and $\DR_l(A)$ are considered as bicomplexes. The grading by
``form degree'' is denoted by $|\!|-|\!|$ and the grading derived from the $A$-grading
is denoted by $|-|$.

If $\omega$, $\omega'$
are differential forms then their commutator is defined as
\[
[\omega,\omega']=\omega\omega'-(-1)^{|\!|\omega'|\!||\!|\omega''|\!|+|\omega'||\omega''|}\omega'\omega
\]
The operator $D$ extends to a derivation $T_A(\Omega^1_l A)\r T_A(\Omega^1_l A)$
homogeneous for $|-|$ and of degree one for $|\!|-|\!|$.  It is easy to see
that $D$ descends to a linear operator on $\DR_l(A)$.

Put $\DDer_l (A)=\Der_{A/l}(A,A\otimes A)$. The elements of $\DDer_l (A)$ are referred to as \emph{double derivations}.

Let $\delta\in \DDer_l (A)$. Then the
 \emph{contraction} $i_\delta$ with $\delta$ defines a double derivation of degree $(-1,|\delta|)$ 
on $T_A(\Omega_l^1 A)$ with respect
the bigrading $(|\!|-|\!|,|-|)$.  More precisely
\begin{align*}
i_\delta&:T_A(\Omega_l^1 A)\r T_A(\Omega_l^1A) \otimes T_A(\Omega_l ^1A)
\end{align*}
is defined as follows: for $a\in A$ one has
\begin{align*}
i_\delta(a)&=0\\
 i_\delta(Da)&=\delta(a)
\end{align*}
Following \cite{CBEG} we put
\begin{equation}
\label{ref-10.1-29}
\iota_\delta(\omega)=(-1)^{|i_\delta(\omega)'||i_\delta(\omega)''|} i_\delta(\omega)'' i_\delta(\omega)'
\end{equation}
\begin{definitions} \label{ref-10.1.1-30} \cite{CBEG} An element
  $\omega\in \DR_l(A)$ with $|\!|\omega|\!|=2$  which is closed for $D$ is
  \emph{bisymplectic} if the map of $A$-bimodules
\[
\DDer_lA\r \Omega^1_lA:\delta\mapsto \iota_\delta \omega
\]
is an isomorphism. 
\end{definitions}
Assume that $\omega\in \DR_l(A)$ is bisymplectic.
Following \cite{CBEG} we define the \emph{Hamiltonian vector
field}  $H_a\in \DDer_l(A)$ corresponding to $a\in A$ via
\begin{equation}
\label{ref-10.2-31}
\iota_{H_a} \omega =Da
\end{equation}
and we put
\begin{equation}
\label{ref-10.3-32}
\ldb a,b\rdb_\omega=H_a(b)\in A\otimes A
\end{equation}
and
\[
\{a,b\}_\omega=\ldb a,b\rdb'_\omega\ldb a,b\rdb''_\omega
\]
For the degree of the operations involved one finds by \eqref{ref-10.2-31}
\[
|H_a|+|\omega|=|a|
\]
and hence by \eqref{ref-10.3-32}
\[
|\ldb a,b\rdb_\omega|=|a|+|b|-|\omega|
\]
In other words $\ldb-,-\rdb$ has degree $-|\omega|$. 

It is shown in \cite[App. A]{VdB33} that 
\[
\ldb -,-\rdb_\omega:A\otimes A\r A\otimes A
\]
is a so-called (graded) ``\emph{double Poisson bracket}''. We will not
precisely define this notion but we note that it implies that $\{-,-\}_\omega$
descends
to a Lie algebra structure on $A/[A,A]$ (this is the Kontsevich bracket,
see \cite{Kosymp}) and furthermore that it 
defines an action of $A/[A,A]$ on $A$ by derivations. For use below let
us give the precise sign involved in exchanging the arguments of a 
double Poisson bracket (see \cite[\S2.7]{VdB33}).
\begin{equation}
\label{ref-10.4-33}
\ldb a,b\rdb_\omega=-(-1)^{(|a|-|\omega|)(|b|-|\omega|)}(-1)^{|\ldb a,b\rdb'_\omega||\ldb a,b\rdb''_\omega|}\ldb b,a\rdb_\omega''\otimes \ldb b,a\rdb_\omega'
\end{equation}
Now assume that  $V\in
\PC(l^e)$ is finite dimensional and let $\eta\in (V\otimes_l V)_l$ be a
non-degenerate element of degree $t$ in the sense that $\eta^+:V^D\r \Sigma^t V$
is an isomorphism (see Appendix \ref{ref-C-112}), which is furthermore
anti-symmetric for $\ss$ (see \S\ref{ref-5-3}).  Then it is easy
to see that
\[
\omega_\eta\overset{\text{def}}{=}\frac{1}{2} D\eta'D\eta''
\]
defines a bisymplectic form of degree $t$ on $A=T_lV$. We note the following. 
\begin{lemmas}
\label{ref-10.1.2-34}
For all $a\in T_lV$ one has
\[
\{a,\sigma'\eta\sigma''\}_{\omega_\eta}=0
\]
where $\sigma'\eta\sigma''\in (V\otimes_l V)^l$ (see \eqref{ref-5.1-4}) is viewed
as an element of $T_l V$. 
\end{lemmas}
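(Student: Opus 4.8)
The plan is to recognize $e:=\sigma'\eta\sigma''$, viewed in $T_lV$, as the noncommutative ``moment'' element: in the quiver case, where $l$ is spanned by the vertex idempotents and $\eta$ is the antisymmetrized pairing between arrows and their duals, $e$ is precisely the element $\sum_a[a,a^\ast]$ occurring in the differential $dz$. The assertion is then the statement that the class of $e$ is central for the necklace (Kontsevich) Lie bracket that $\{-,-\}_{\omega_\eta}$ induces on $T_lV/[T_lV,T_lV]$, which is the graded, coordinate-free analogue of the classical centrality of $\sum_a[a,a^\ast]$ established in \cite{LebBock,CBEG}.

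First I would reduce to generators. By the graded antisymmetry \eqref{ref-10.4-33}, $\{a,e\}_{\omega_\eta}$ vanishes for all $a$ if and only if $\{e,a\}_{\omega_\eta}$ does; and since $\{e,-\}_{\omega_\eta}$ is a (continuous, $l$-linear) derivation of $A=T_lV$, by the remarks following \eqref{ref-10.4-33}, it is determined by its values on $V$. It therefore suffices to prove $\{v,e\}_{\omega_\eta}=0$ for $v\in V$. Next I would make the double bracket of linear functions explicit: for $v,w\in V$ the element $\ldb v,w\rdb_{\omega_\eta}=H_v(w)=i_{H_v}(Dw)$ is, by the defining relation $\iota_{H_v}\omega_\eta=Dv$ of \eqref{ref-10.2-31} together with the non-degeneracy of $\eta$, a \emph{constant} element of $l\otimes l$ given by the bivector inverse to $\eta$; concretely, contracting $\omega_\eta=\frac12 D\eta'D\eta''$ pairs $H_v$ against the two slots of $\eta$.

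I would then expand $\ldb v,e\rdb_{\omega_\eta}=\ldb v,(\sigma'\eta')(\eta''\sigma'')\rdb_{\omega_\eta}$ using the Leibniz rule for the double bracket in its second argument. This produces one term in which $v$ is contracted against $\eta'$ and one in which it is contracted against $\eta''$, each carrying the surviving $V$-factor together with the Casimir legs $\sigma',\sigma''$ in fixed positions. Forming $\{v,e\}_{\omega_\eta}=\ldb v,e\rdb'_{\omega_\eta}\ldb v,e\rdb''_{\omega_\eta}$ by multiplying the two tensor components then yields a sum of two products in $A$, and the crux is that these two products cancel. In the commutative approximation they would instead add, reproducing the Euler vector field, so the vanishing is a genuinely noncommutative phenomenon.

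The cancellation is produced by relabelling the Sweedler legs of $\eta$ through its $\ss$-antisymmetry, transporting the $\sigma$-legs using the Casimir identities $\sigma'\otimes\sigma''=\sigma''\otimes\sigma'$ and $a\sigma'\otimes\sigma''=\sigma'\otimes\sigma''a$ of \S\ref{ref-5-3}, and reconciling the resulting Koszul signs, all governed by the degree $t=|\eta|$ and the degree $-t$ of the bracket. I expect the sign bookkeeping to be the main obstacle: the antisymmetry of $\eta$ supplies exactly the relative sign that converts the commutative ``addition'' into a cancellation, and verifying that this sign survives the passage through the Casimir relations and the graded-commutativity rule for arbitrary $t$ is the delicate point. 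The geometric input making it work is the closedness $D\omega_\eta=0$, which is automatic here because $\omega_\eta$ has constant coefficients.
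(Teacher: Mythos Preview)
Your strategy is plausible and, if carried through, would yield a correct proof, but it differs from the paper's argument and leaves the decisive sign computation undone. The paper avoids your reduction-to-generators and Leibniz expansion entirely by identifying the Hamiltonian double derivation $H_{\sigma'\eta\sigma''}$ with the ``gauge'' double derivation $-\Delta$, where $\Delta(f)=[f,\sigma'\otimes\sigma'']=f\sigma'\otimes\sigma''-\sigma'\otimes\sigma''f$. This is verified by a direct computation of $\iota_\Delta\omega_\eta$ (using the antisymmetry of $\eta$ and the symmetry of $\sigma$) showing it equals $-D(\sigma'\eta\sigma'')$. Once this identification is in place, the vanishing is immediate for \emph{all} $a\in T_lV$, not just generators: one has $\{a,\sigma'\eta\sigma''\}_{\omega_\eta}=\pm\iota_\Delta Da=\pm(\sigma''a\sigma'-\sigma''a\sigma')=0$.

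The conceptual gain of the paper's route is that the cancellation you anticipate between the two Leibniz terms is explained structurally: $\Delta$ is an ``inner'' double derivation, so its reduced contraction against any exact $1$-form $Da$ vanishes tautologically, with no sign bookkeeping per generator required. Your approach trades this single identification for a term-by-term cancellation that, as you acknowledge, still needs the signs to be checked; that check is doable (and in the quiver case amounts to the classical necklace-bracket centrality of $\sum_a[a,a^\ast]$ you cite), but the paper's argument packages it more cleanly and handles arbitrary $a$ in one stroke.
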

\begin{proof}
\begin{align*}
\ldb a,\sigma'\eta\sigma''\rdb_{\omega_\eta}&=\ldb a,\sigma'\eta\sigma''\rdb_{\omega_\eta}'\otimes \ldb a,\sigma'\eta\sigma''\rdb_{\omega_\eta}''\\
&=(-1)^{(|a|+t)(|\eta|+t)} (-1)^{|\ldb a,\sigma'\eta\sigma''\rdb_{\omega_\eta}'||\ldb a,\sigma'\eta\sigma''\rdb_{\omega_\eta}''|}
\ldb \sigma'\eta\sigma'',a\rdb_{\omega_\eta}''\otimes \ldb \sigma'\eta\sigma'',a\rdb_{\omega_\eta}'
\end{align*}
(see \eqref{ref-10.4-33} for the signs which are not important here). Thus
\begin{equation}
\label{ref-10.5-35}
\{a,\sigma'\eta\sigma''\}_{\omega_\eta}=\pm \iota_{H_{\sigma'\eta\sigma''}} Da
\end{equation}
To continue we make the following claim
\[
H_{\sigma'\eta\sigma''}=-\Delta
\]
where $\Delta$ is the canonical double $l$-derivation which sends $f$
to $[f,\sigma'\otimes\sigma'']=f\sigma'\otimes \sigma''-\sigma'\otimes
\sigma ''f$.

To prove this claim we use the fact that the defining equation for $H_{\sigma'\eta\sigma''}$ is
\[
\iota_{H_{\sigma'\eta\sigma''}}\omega_\eta=D(\sigma'\eta \sigma'')
\]
We compute
\begin{align*}
2\iota_{\Delta}\omega&=(-1)^{|\eta'||\eta''|}\sigma'' (D\eta'')\eta' \sigma' 
-\sigma'' \eta' (D\eta'')\sigma'
-\sigma'' (D\eta') \eta''\sigma'
+(-1)^{|\eta'||\eta''|}\sigma''\eta'' (D\eta') \sigma'\\
&=-\sigma'' (D\eta')\eta'' \sigma' 
-\sigma'' \eta' (D\eta'')\sigma'
-\sigma'' (D\eta') \eta''\sigma'
-\sigma''\eta' (D\eta'') \sigma'\\
&=-2\sigma'' D(\eta'\eta'')\sigma'\\
&=-2D(\sigma'\eta\sigma'')
\end{align*}
where in the second line we have used anti-symmetry of $\eta$ and in the last
line the symmetry of $\sigma$. We get indeed
$H_{\sigma'\eta\sigma''}=-\Delta$. Plugging this into \eqref{ref-10.5-35} we find
\begin{align*}
\{a,\sigma'\eta\sigma''\}_{\omega_\eta}&=\pm \iota_{\Delta} Da\\
&=\pm(\sigma'' a\sigma'-\sigma''a\sigma')\\
&=0\qed
\end{align*}
\def\qed{}\end{proof}

\subsection{Deformed DG-preprojective algebras}
We now discuss a class of DG-algebras introduced by Ginzburg
in \cite[\S3.6]{Gi} and (somewhat implicitly) by
Lazaroiu in \cite[\S5.3]{Lazaroiu}. We introduce things in the pseudo-compact
context but of course the definition works just as well in the non-topological case. 

Suppose we have the data $(V_c,\eta,w)$ where 
\begin{enumerate}
\item $V_c\in \PC(l^e)$ is finite dimensional.
\item $\eta\in (V_c\otimes_l V_c)_l$ is non-degenerate and anti-symmetric under
$\ss$.
\item $w\in T_lV_c/[T_lV_c,T_lV_c]$ satisfies $|w|=|\eta|+1$ and $\{w,w\}_{\omega_\eta}=0$.
\end{enumerate}
The \emph{deformed DG-preprojective algebra} $\Pi(V_c,\eta,w)$ associated to
this data is the augmented pseudo-compact $l$-DG-algebra $T_l(V_c+zl)$ where $z$
is $l$-central and the differential is given by 
\begin{align*}
dz&=\sigma'\eta\sigma''\\
df&=\{w,f\}_{\omega_\eta}\qquad \text{for $f\in T_l V_c$}
\end{align*}
It is easy to see that this defines an honest DG-algebra. Indeed to verify
$d^2=0$ we have to check $d^2z=0$ and $d^2f=0$ for $f\in V_c$. We find
\[
d^2z=d(\sigma'\eta\sigma'')=\{w,\sigma'\eta\sigma''\}_{\omega_\eta}=0
\]
using Lemma \ref{ref-10.1.2-34} and
\begin{equation}
\label{ref-10.6-36}
d^2f=\{w,\{w,f\}_{\omega_\eta}\}_{\omega_\eta}=\frac{1}{2}\{\{w,w\}_{\omega_\eta},f\}_{\omega_\eta}=0
\end{equation}
\begin{remarks}
The element $w$ in the definition of a deformed DG-preprojective algebra is commonly called 
a \emph{superpotential}. 
\end{remarks}
The reason for introducing deformed DG-preprojective algebras is the following.
\begin{theorems}
\label{ref-10.2.2-37}
Assume that $A\in \Algc(l)$ is concentrated in degrees $\le 0$ and $d\ge 3$. Then the
following are equivalent.
\begin{enumerate}
\item $A$ is exact $d$-Calabi-Yau.
\item $A$ is weakly equivalent to an algebra of the form $\Pi(V_c,\eta,w)$
with $V_c$ concentrated in degrees $[-d+2,0]$, $|\eta|=-d+2$, $|w|=-d+3$ and
$w$ contains only cubic terms and higher.
\end{enumerate}
\end{theorems}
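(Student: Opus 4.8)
The plan is to prove the two implications by rather different means: (2)$\Rightarrow$(1) is a direct computation with the explicit algebra $\Pi(V_c,\eta,w)$, while (1)$\Rightarrow$(2) proceeds by transporting the exact Calabi--Yau data onto a minimal cofibrant model and then recognizing that model as a deformed DG-preprojective algebra.

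For (2)$\Rightarrow$(1), observe first that $\Pi(V_c,\eta,w)=T_l(V_c\oplus zl)$ is a tensor algebra after forgetting the differential, hence cofibrant in $\PCAlgc(l)$. The bisymplectic form $\omega_\eta$ (Definition \ref{ref-10.1.1-30}) supplies the isomorphism $\DDer_l\Pi\xrightarrow{\sim}\Omega^1_l\Pi$, out of which one builds a self-dual bimodule resolution of length $d$; this shows at once that $\Pi$ is homologically smooth and $d$-Calabi--Yau, with fundamental class determined by $\eta$. To upgrade this to \emph{exact} Calabi--Yau I compute cyclic homology through the $X$-complex, which is legitimate here by Proposition \ref{ref-7.2.1-15} in its pseudo-compact form. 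On the small complex $M(X(\Pi))$ the superpotential represents a class $[w]\in\HC_{d-1}(\Pi)$ whose image under the Connes operator is $[\overline{Dw}]$; the relation $dz=\sigma'\eta\sigma''$ links this image to the pairing and identifies it with the non-degenerate fundamental class in $\HH_d(\Pi)$, so $B[w]$ is non-degenerate and $\Pi$ is exact $d$-Calabi--Yau.

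For (1)$\Rightarrow$(2) I first replace $A$ by a minimal cofibrant model $R=T_lV$, with $V$ finite dimensional in each degree and concentrated in degrees $\le 0$; finiteness is guaranteed by homological smoothness and minimality means $dV\subset(\bar R)^2$. Since $R$ is a tensor algebra its cyclic homology is computed by the $X$-complex, so the datum of Definition \ref{ref-8.3-21}---a class $\eta\in\HC_{d-1}(A)=\HC_{d-1}(R)$ with $B\eta$ non-degenerate---is made explicit on $M(X(R))$. The graded Ext-algebra $\Ext^\ast_A(l,l)$ is symmetric Frobenius of degree $d$ (\cite[Lemma 3.4]{Keller11}; compare the proof of Corollary \ref{ref-9.2-24}); dualizing this places the generators in degrees $[-d+1,0]$ and forces the splitting $V=V_c\oplus zl$, where $zl$ is the $l$-central generator in degree $-d+1$ dual to the unit, and $V_c$ occupies degrees $[-d+2,0]$ and carries a non-degenerate pairing $\eta$ of degree $-d+2$. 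Anti-symmetry of $\eta$ for the flip \eqref{ref-5.2-5} follows from self-duality of the Calabi--Yau form (automatic by Appendix \ref{ref-C-112}) together with the symmetry \eqref{ref-5.4-7} of the induced pairing.

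It remains to match the differential. The bottom generator must satisfy $dz=\sigma'\eta\sigma''$, because under the self-dual resolution this Casimir element is precisely the map dual to $\partial_1$ in \eqref{ref-7.4-16}, i.e.\ the top preprojective relation. On $T_lV_c$ the differential is a symplectic derivation for $\omega_\eta$, and the crux is that \emph{exactness}---the existence of the cyclic lift $\eta\in\HC_{d-1}$---is exactly what integrates this derivation to a Hamiltonian one: reading the cyclic representative through the quasi-isomorphism $\sigma$ of Proposition \ref{ref-7.2.1-15} yields $w\in(T_lV_c)_l$ of degree $-d+3$ with $d|_{T_lV_c}=\{w,-\}_{\omega_\eta}$. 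Minimality $dV_c\subset(\bar R)^2$ forbids a quadratic part in $w$ and, with the augmentation, any linear or constant part, so $w$ is cubic and higher; finally $d^2=0$ becomes $\{w,w\}_{\omega_\eta}=0$ by \eqref{ref-10.6-36}. Hence $R=\Pi(V_c,\eta,w)$ and $A$ is weakly equivalent to a deformed DG-preprojective algebra. The main obstacle is this integration step: it is the non-commutative, pseudo-compact avatar of the classical fact that an exact symplectic vector field is Hamiltonian, and it is here---and only here---that plain Calabi--Yau would not suffice, the role of Definition \ref{ref-8.3-21} being precisely to furnish the primitive $w$.
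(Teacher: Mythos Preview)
Your outline has the right architecture but misidentifies \emph{which} element serves as the cyclic primitive, and this produces genuine gaps in both directions.

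In (2)$\Rightarrow$(1) you take $[w]\in\HC_{d-1}(\Pi)$ as the witness for exactness. But $|w|=-d+3$, whereas for cofibrant $A$ one has $\HC^{\text{red}}_{d-1}(A)=H^{-d+1}\bigl((A/(l+[A,A]))_l\bigr)$; so $w$ sits in the wrong degree. The element that actually lifts the fundamental class is $\bar{z}^\dagger$, of degree $-d+1$: since $\eta$ is $\ss$-anti-symmetric it is a sum of commutators, so $d\bar{z}^\dagger=\bar{\eta}=0$ in $A/(l+[A,A])$, and then $B\bar{z}^\dagger=(Dz^\dagger,0)$ is non-degenerate by the explicit criterion of Lemma~\ref{ref-11.1.2-46} (namely $\res(Dz^\dagger)=z^\dagger$). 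The superpotential $w$ plays no direct role here.

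In (1)$\Rightarrow$(2) there are two gaps. First, the Frobenius structure on $\Ext^\ast_A(l,l)$ only yields $dz\equiv\sigma'\eta_2\sigma''$ \emph{modulo cubic and higher}; it does not force $dz$ to be purely quadratic, which is what the definition of $\Pi(V_c,\eta,w)$ requires. The paper uses the exact Calabi--Yau datum precisely at this point, and not to manufacture $w$: the cyclic representative is an element $\chi\in A$ of degree $-d+1$ with $d\chi\in[A,A]$; Lemma~\ref{ref-11.1.2-46} forces $\chi=uz^\dagger+(\text{higher})$, and one \emph{replaces} $z$ by $\sigma'\chi\sigma''$ so that the new $dz$ is a sum of commutators in $T_lV_c$. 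A further iterative normalization by automorphisms of the form $v\mapsto v+\beta(v)$ then kills all higher-order terms of $dz$, leaving $dz=\sigma'\eta\sigma''$ on the nose (this is the step (3)$\Rightarrow$(2) of Theorem~\ref{ref-11.2.1-58}, and it is not automatic). Second, once $dz=\sigma'\eta\sigma''$ exactly, the existence of $w$ with $df=\{w,f\}_{\omega_\eta}$ for $f\in T_lV_c$ follows from $d^2z=0$ alone, via the explicit formula $\bar{w}=(-1)^{|\eta'|+1}\eta'\,d\eta''$ and cyclic symmetrization (Lemma~\ref{ls}); exactness is no longer invoked. So the logical flow is: exactness $\Rightarrow$ good choice of $z$ $\Rightarrow$ normalization of $dz$ $\Rightarrow$ construction of $w$ from $d$, rather than exactness $\Rightarrow$ $w$ directly as you propose.
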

A related result has been proved by Ginzburg in the non-DG case
\cite[Thm 3.6.4]{Gi}. We will prove Theorem \ref{ref-10.2.2-37} in
\S\ref{ref-11-43}.
\begin{remarks} 
\label{ref-10.2.3-38}
The actual choice of the trace $\Tr:l\r k$ is
  irrelevant for the definition of $\Pi(V_c,\eta,\omega)$. Indeed
  different traces lead to $\sigma$'s differing by a
  central element  of $l$ which can be absorbed into
  $z$.
\end{remarks}
\begin{remarks} The hypothesis in Theorem \ref{ref-10.2.2-37} that $A$ is concentrated in
degrees $\le 0$ may be inconvenient in some cases. We may relax it as follows.
We say that an $l$-bimodule $W$ has \emph{no oriented cycles of strictly 
positive degree} if
 $((T_l W)_l)_{>0}=0$.  One may prove a variant of Theorem \ref{ref-10.2.2-37} under
the hypothesis that $A$ (as $l$-bimodule) has no oriented cycles of strictly
positive degree. The condition that $V_c$ lives in degrees $[-d+2,0]$ should
then also be replaced by the condition that $V_c$ has no oriented cycles of
strictly positive degree. 
\end{remarks}
\subsection{The quiver case}
\label{ref-10.3-39}
In this section we discuss quivers. \emph{We implicitly assume that
all path algebras  are completed at path length.}

We assume that $k$ is algebraically closed (still of
characteristic zero) and $l=\sum_{i=1}^n ke_i$ for central orthogonal
idempotents $(e_i)_i$. 
If we are given $(V_c,\eta)$ where $V_c$ is a
finite dimensional graded $l$-bimodule and $\eta$ is a non-degenerate
anti-symmetric element in $(V_c\otimes_l V_c)_l$ then we may write
\[
\eta=\sum_{ijt} [x_{ij}^{t},x_{ji}^{t\ast}]\qquad \text{mod $[l,-]$}
\]
where $\{x_{ij}^{t},x_{ji}^{t\ast}\}$ represents a homogeneous
basis for $e_i V_c e_j$ with $|x_{ij}^{t}|\ge |\eta|/2$. We assume that the
$x_{ij}^{t},x_{ji}^{t\ast}$ are all distinct except for the case $i=j$
where we assume $x_{ii}^{t}=x_{ii}^{t\ast}$ if
$|x_{ii}^{t}|=|x_{ii}^{t\ast}|=|\eta|/2$ is odd.

As usual the $x_{ij}^{t}$, $x_{ji}^{t\ast}$ may be regarded as arrows
in a quiver $\tilde{Q}$ with vertices $\{1,\ldots,n\}$ and it easy to
check that in that case $\{-,-\}_{\omega_\eta}$ is precisely the
\emph{necklace bracket} on $T_l V_c=k\tilde{Q}$
\cite{LebBock,Ginzburg1,Kosymp}.
For a pictorial presentation of the necklace bracket as an operation
\[
k\tilde{Q}/[k\tilde{Q},k\tilde{Q}]\times k\tilde{Q}\r k\tilde{Q}
\]
see \cite[Prop.\ 6.4.1]{VdB33}.

If we let $Q$ be the graded quiver with arrows $x_{ij}^{t}$ of degree $\ge |\eta|/2$ then $\tilde{Q}$ is
obtained from $Q$ by the following procedure.
\begin{enumerate}
\item For every non-loop $a\in Q$ we adjoin an arrow $a^\ast$ in the
opposite direction such that $|a^\ast|+|a|=|\eta|$. 
\item We do the same for loops, \emph{except} for a loop $a$ of odd degree
such that $|a|=|\eta|/2$. In that case we put $a^\ast=a$ (thus we
do not adjoin an extra arrow). 
\end{enumerate}
Taking into account Remark \ref{ref-10.2.3-38} we may assume that
$\sigma=\sum_i e_i\otimes e_i$.  This leads to the following
construction. We start with the data $(Q,d,w)$ consisting of (1) an integer $d\ge 3$, (2) a
finite graded quiver $Q$ with arrows of degree $\ge (2-d)/2$, (3) an element $w\in
k\tilde{Q}/[k\tilde{Q},\tilde{Q}]$ 
(with $\tilde{Q}$ as above with $|\eta|=2-d$) satisfying
$\{w,w\}=0$ for the necklace bracket on
$k\tilde{Q}/[k\tilde{Q},k\tilde{Q}]$.

Let $\bar{Q}$ be obtained from $\tilde{Q}$ by adjoining at every vertex $i$
a loop $z_i$ of degree $1-d$. Put $z=\sum_i z_i$. 
Then $\Pi(Q,d,w)$ is the DG-algebra $(k\bar{Q},d)$ with differential
\begin{align}
dz&=\sum_{a\in Q} [a,a^\ast] \label{ref-10.7-40}\\
df&=\{w,f\}\qquad \text{for $f\in k\tilde{Q}$} \label{ref-10.8-41}
\end{align}
By the above discussion we have
\[
\Pi(Q,d,w)\cong\Pi(V_c,\eta,w)
\]
where
\[
\eta=\sum_{a\in Q}[a,a^\ast]
\]
and $V_c$ is the $l$-bimodule corresponding to the quiver $\tilde{Q}$. 
\begin{remarks} The condition \eqref{ref-10.8-41} may be rewritten as
\begin{equation}
\label{ref-10.9-42}
\begin{aligned}
da&=(-1)^{(|a|+1)|a^\ast|} \frac{{}^\circ\partial w}{\partial a^\ast}\\
da^\ast&=-(-1)^{|a|}\frac{{}^\circ\partial w}{\partial a}
\end{aligned}
\end{equation}
for $a\in Q$ where ${}^\circ \partial/\partial x$ for is the \emph{circular 
derivative}
\[
\frac{{}^\circ \partial w}{\partial x}=\sum_{w=uxv}(-1)^{|u|(|x|+|v|)}vu
\]
\end{remarks}
\begin{remarks}
Since we are assuming that $k$ is algebraically closed we can in fact
always reduce to the quiver case up to Morita equivalence. Indeed
we have $l=\bigoplus_i l_i$ for $l_i=M_{p_i}(k)$. Let $e$ be an idempotent
in $l$ such that $ele$ is isomorphic to the center $Z(l)$ of $l$.  Clearly
$\Pi(V_c,\eta,w)$ is Morita equivalent to $e\Pi(V_c,\eta,w)e$. 

Put
$V'_c=eV_ce$. Then $(V_c\otimes_l V_c)_l\cong (V'_c\otimes_{Z(l)} V'_c)_{Z(l)}$.
So $\eta$ corresponds to an element $\eta'$ of $(V'_c\otimes_{Z(l)} V'_c)_{Z(l)}$.
Put $w'=ewe$. One verifies easily that $\{w',w'\}_{\omega_{\eta'}}=0$ and
\[
e\Pi(V_c,\eta,w)e\cong\Pi(V'_c,\eta',w')
\]
\end{remarks}

\section{Proof of Theorem \ref{ref-10.2.2-37}}
\label{ref-11-43}
As the title of this section indicates we will prove Theorem
\ref{ref-10.2.2-37}.  The proof consists of a number of steps. In
\S\ref{ref-11.1-44} we will translate the non-degeneracy
condition for Hochschild cycles into a more tractable form.  In
\S\ref{ref-11.2-57} we obtain a first classification of exact Calabi-Yau
algebras. Finally in \S\ref{ref-11.3-68} we complete the proof. 

\subsection{Non-degeneracy in the cofibrant case}
\label{ref-11.1-44}
In this section we assume that $A=(T_lV,d)$ is a
cofibrant object in $\PCAlgc(l)$ (see \S\ref{ref-A.4-97}).  We
write the differential as $d=d_1+d_2+\cdots$ as in
\S\ref{ref-A.5-99}.

By the pseudo-compact analogue of Proposition \ref{ref-7.2.1-15}  (see the end of
 \S\ref{ref-7.2-12}) we have $\C(A)\cong \cone((\Omega^1_A)_{\natural}\xrightarrow{\partial_1} A_l)$.
An element $\xi$ of $\HH_d(A)$ is now represented by a pair
$(\omega,a)$ where $\omega\in (\Omega_l^1 A)_{\natural}$, $a\in A_l$,
$d\omega=0$, $\partial_1 \omega=da$, $|\omega|=d-1$, $|a|=d$. We would
like to know when $\xi$ is non-degenerate. We answer this question in a typical
case in Lemma \ref{ref-11.1.2-46} below.

We have a morphism of $X$-complexes
\[
\xymatrix{
A_l \ar[r]^{\partial_0} \ar[d]&(\Omega_l^1 A)_{\natural} \ar[d]_{\text{res}}\ar[r]^{\partial_1}& A_l\ar[d]\\
l_l\ar[r]_0& V_l\ar[r]_0 & l_l
}
\]
The outermost maps send $f\in A$ to its image in $\bar{f}$ in $l$. 
The middle map sends $f Dv$ for $v\in V$ to $\bar{f}v$.  We obtain a corresponding map
\begin{equation}
\label{ref-11.1-45}
\C(A)\r ((l\oplus \Sigma V)_l,d_1)
\end{equation}
of complexes.
\begin{remarks} It is easy to show that that \eqref{ref-11.1-45} can be described
intrinsically as being obtained from the the standard map
\[
\C(A)=A\Lotimes_{A^e} A\r (l\Lotimes_A l)_l
\]
We present it in the above explicit way since that is how we will use it. 
\end{remarks}
\begin{lemmas} \label{ref-11.1.2-46}
Assume $V$ is of the following form
\begin{enumerate}
\item \label{ref-1-47}$d_1:V\r V$ is zero.
\item \label{ref-2-48}$
V=V_c\oplus lz
$
with $z$ an $l$-central element of degree $-d+1$ and $V_c$ finite dimensional. 
\item \label{ref-3-49} $V_c$ is concentrated in degrees $[-d+2,0]$. 
\item \label{ref-4-50} $dz=\sigma'\eta \sigma''\mod V^{\otimes_l 3}$ with $\eta\in (V_c\otimes_l
  V_c)_l$ being a non-degenerate  element
  of degree $-d+2$.
\end{enumerate}
Then $(\omega,a)$ is non degenerate if and and only if $\res\omega=uz^\dagger$
for $u$ a central unit in~$l$ (see \eqref{ref-5.1-4} for the notation $(-)^\dagger$). 
\end{lemmas}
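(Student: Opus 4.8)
The plan is to test non-degeneracy after reducing modulo the radical and to read off the answer from the two block-pieces that appear. Recall that $(\omega,a)$ is non-degenerate precisely when the associated map $\xi^+\colon A^D\r\Sigma^{-d}A$ is an isomorphism in $D(A^e)$. Since $A=(T_lV,d)$ is cofibrant, both $A^D$ and $\Sigma^{-d}A$ are strictly perfect $A^e$-modules, so by the pseudo-compact Nakayama argument already used in the proof of Corollary \ref{ref-9.2-24} the map $\xi^+$ is an isomorphism if and only if $l\Lotimes_A\xi^+\Lotimes_A l$ is. First I would therefore compute this reduction explicitly.

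For the reduction I would use the length-two bimodule resolution \eqref{ref-7.4-16}, $0\r\Omega^1_lA\xrightarrow{\partial_1}A\otimes_lA\r A\r0$, together with $\Omega^1_lA\cong A\otimes_lV\otimes_lA$. Applying $l\Lotimes_A(-)\Lotimes_A l$ and using hypothesis \eqref{ref-1-47} (that $d_1=0$) together with the fact that the reduction of $\partial_1$ vanishes, one gets $l\Lotimes_A l\cong l\oplus\Sigma V$ with zero differential; dualizing the resolution and reducing likewise gives $l\Lotimes_AA^D\Lotimes_A l\cong(l\oplus\Sigma V)^\ast$, the $l$-bimodule dual of \S\ref{ref-5-3}. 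Thus $l\Lotimes_A\xi^+\Lotimes_A l$ is merely a degree-zero map of graded $l$-bimodules $(l\oplus\Sigma V)^\ast\r\Sigma^{-d}(l\oplus\Sigma V)$, and under the map \eqref{ref-11.1-45} it is the cap product with the reduced class $\bar\xi=(\res\omega,\bar a)$.

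The key step is a degree analysis of this reduced map. Since $l$ sits in degree $0$, the component $\bar a\in l_l$ vanishes for $d\ge1$, so $\bar\xi$ is carried entirely by $\res\omega\in V_l$; and since $\res\omega$ is forced into the single internal degree occupied by $z$, hypotheses \eqref{ref-2-48}--\eqref{ref-3-49} (that $V_c$ lives in $[-d+2,0]$ while $z$ lives in degree $-d+1$) show that automatically $\res\omega=uz^\dagger$ for some central $u\in l$. Feeding hypothesis \eqref{ref-4-50} (that $dz=\sigma'\eta\sigma''\bmod V^{\otimes_l 3}$, so that the quadratic part of the differential on the resolution is $\eta$) into the computation, the reduced map becomes block-triangular for the decomposition $V=V_c\oplus lz$, with diagonal blocks (i) the $l\leftrightarrow lz$ pairing given by multiplication with $u$, and (ii) the $V_c^\ast\leftrightarrow V_c$ block induced by $\eta^+$. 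Block (ii) is an isomorphism by the non-degeneracy of $\eta$, so $l\Lotimes_A\xi^+\Lotimes_A l$ is an isomorphism if and only if block (i) is, i.e.\ if and only if $u$ is a unit. Combined with the first paragraph this is exactly the assertion.

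I expect the main obstacle to be the explicit chain-level identification in the third paragraph: producing $\xi^+$ on the resolution \eqref{ref-7.4-16}, dualizing it compatibly with the $l$-bimodule duality of \S\ref{ref-5-3} (the maps $\pi$, $c_{-,-}$, the flip \eqref{ref-5.2-5} and the symmetric pairing \eqref{ref-5.4-7}), and then verifying that after reduction the off-diagonal blocks drop out for degree reasons while the $V_c$-block is precisely $\eta^+$. Keeping track of the Casimir element $\sigma$ and of the signs throughout is the delicate, and essentially the only nontrivial, part of the argument.
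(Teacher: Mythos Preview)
Your overall strategy---test non-degeneracy after applying $l\Lotimes_A(-)\Lotimes_A l$ and analyse the result via a block decomposition on $V=V_c\oplus lz$---is exactly the paper's (it is packaged there as Lemma~\ref{ref-11.1.3-51}). Your degree arguments showing $\bar a=0$ and $\res\omega=uz^\dagger$ automatically are also correct.

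The gap is in your identification of the $V_c$-block. You write that the reduced map ``is the cap product with the reduced class $\bar\xi=(\res\omega,\bar a)$'' and that feeding in $dz=\sigma'\eta\sigma''$ then makes the $V_c$-block equal to $\eta^+$. But the relevant reduced class lives in $(l\oplus\Sigma V)\otimes_{l^e}(l\oplus\Sigma V)$, \emph{not} in $(l\oplus\Sigma V)_l$; the map \eqref{ref-11.1-45} you invoke lands in the latter and loses exactly the $\Sigma V_c\otimes\Sigma V_c$ component that you need. If the reduced map really were contraction with $(uz^\dagger,0)$, the $V_c$-block would vanish and the map could never be an isomorphism. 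The only way to recover the missing component is to lift $(\omega,a)$ from $X(A)$ to a cocycle in the bicomplex $Y(A)=P\otimes_{A^e}P$ (with $P$ the two-term resolution \eqref{ref-7.4-16}), and it is the \emph{cocycle condition} in $Y(A)$, not the differential on $P$, that forces the $\Sigma V_c\otimes\Sigma V_c$ part of $\bar\xi$ to be (up to sign) $sD\eta'\otimes sD\eta''$ via $dz=\sigma'\eta\sigma''$. The paper carries out precisely this lift-and-cocycle computation; your proposal does not, and the phrase ``the quadratic part of the differential on the resolution is $\eta$'' does not substitute for it---after reducing $P$ to $l\oplus\Sigma V$ the differential is zero by hypothesis~\eqref{ref-1-47}, so $\eta$ is invisible there. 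This is the genuine work in the lemma, not merely sign bookkeeping.
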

We use the following preparatory lemma.
\begin{lemmas} \label{ref-11.1.3-51} Let $M$, $N$ be objects in  $\PCDGMod(A^e)$ which
  are of the form $(A\otimes_l M_0\otimes_l A, d)$, $(A\otimes_l
  N_0\otimes_l A, d)$, such that $l^e\otimes _{A^e} M=M_0$, $l^e\otimes_{A^e} N=N_0$ are
finite dimensional and have
zero differential. Let $\xi\in H^{u}(M\otimes_{A^e} N)$. The following are equivalent
\begin{enumerate}
\item $\xi$ is non-degenerate.
\item The image
$\bar{\xi}$ of $\xi$ in $H^u(M_0\otimes_{l^e} N_0)$ is non-degenerate. 
\end{enumerate}
\end{lemmas}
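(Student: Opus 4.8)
The plan is to reduce the non-degeneracy of $\xi\in H^u(M\otimes_{A^e}N)$ to a statement about the explicit bimodules $M=A\otimes_l M_0\otimes_l A$ and $N=A\otimes_l N_0\otimes_l A$, where everything is controlled by the finite-dimensional data $M_0,N_0$ over $l^e$. Recall from \eqref{ref-8.1-19} that non-degeneracy of $\xi$ means that the associated map $\xi^+:M^D\to \Sigma^{-u}N$ is an isomorphism in $D(A^e)$. Since $M$ is of the stated induced form $A\otimes_l M_0\otimes_l A$ (i.e. a free bimodule up to the $l$-structure), its bimodule dual $M^D=\RHom_{A^e}(M,A^e)$ is computed directly as $A\otimes_l M_0^\ast\otimes_l A$ with the transported (dual) differential, using the duality for $l$-bimodules set up in \S\ref{ref-5-3}; the same applies to $N$. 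Thus $\xi^+$ is a morphism between two objects which are free as graded $A^e$-modules, and the first step is to write it out at this level.

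The key reduction is that a morphism $\phi:P\to Q$ between bounded-below (or appropriately bounded) complexes of free $A^e$-modules, each of the form $A\otimes_l(-)_0\otimes_l A$, is a weak equivalence if and only if the induced map $l^e\otimes_{A^e}\phi$ on the "fibers" $(-)_0$ is a quasi-isomorphism -- this is the standard Nakayama-type argument already used in the proof of Corollary \ref{ref-9.2-24}. Concretely I would apply the functor $l^e\otimes_{A^e}(-)$ to $\xi^+$. By hypothesis $l^e\otimes_{A^e}M=M_0$ and $l^e\otimes_{A^e}N=N_0$ have zero differential and are finite dimensional, so $l^e\otimes_{A^e}\xi^+$ is simply a map $M_0^\ast\to\Sigma^{-u}N_0$ of complexes with zero differential, i.e. a plain linear map in each degree. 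The claim that $\xi$ is non-degenerate iff $\bar\xi$ is non-degenerate then amounts to saying: $\xi^+$ is an isomorphism in $D(A^e)$ iff $l^e\otimes_{A^e}\xi^+=\bar\xi^+$ is an isomorphism over $l^e$. One direction (non-degeneracy implies $\bar\xi$ non-degenerate) is formal, since applying $l^e\otimes_{A^e}(-)$ to an isomorphism gives an isomorphism. For the converse I would invoke that $M^D$ and $N$ are (strictly) perfect $A^e$-modules built from the free modules $A\otimes_l(-)_0\otimes_l A$, and that a map of such objects inducing a quasi-isomorphism after $\otimes^L_{A^e}l^e$ is automatically a weak equivalence in $\PCDGMod(A^e)$; here one uses the pseudo-compact completeness so that the $\rad A$-adic filtration is exhausting and the associated-graded map is a quasi-isomorphism.

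The one genuine compatibility I would need to check carefully is that the pairing $\xi\mapsto\xi^+$ is compatible with the reduction functor $l^e\otimes_{A^e}(-)$, in the sense that the map $\bar\xi^+$ obtained by reducing $\xi^+$ coincides (up to the canonical identifications of \S\ref{ref-5-3}) with the map $(\bar\xi)^+$ associated to the reduced class $\bar\xi\in H^u(M_0\otimes_{l^e}N_0)$. This is the heart of the lemma and the place where the Casimir element $\sigma$ and the duality maps $\pi$, $c_{??}$ from Section \ref{ref-5-3} enter: one must verify that the formula defining $\xi^+$ out of $\xi$ intertwines the $A^e$-duality on $M$ with the $l^e$-duality on $M_0$. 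I expect this verification -- tracking the identification $M^D=A\otimes_l M_0^\ast\otimes_l A$ and confirming that reduction modulo $\rad A$ sends it to $M_0^\ast$ over $l^e$ compatibly with the evaluation pairing -- to be the main obstacle; once it is in place, the equivalence $(1)\Leftrightarrow(2)$ follows by combining it with the Nakayama/filtration argument above. The boundedness and finite-dimensionality hypotheses on $M_0,N_0$, together with $A\in\PCAlgc(l)$ being cofibrant, guarantee that all the relevant complexes are perfect and that weak equivalence of the reduced complexes lifts to a weak equivalence of the original ones.
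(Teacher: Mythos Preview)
Your proposal is correct and follows essentially the same strategy as the paper: reduce $\xi^+$ via $l^e\otimes_{A^e}(-)$, identify the result with $\bar\xi^+$, and use Nakayama. The paper's version of $(2)\Rightarrow(1)$ is slightly slicker than your filtration/weak-equivalence argument: since $M_0,N_0$ have zero differential, $\bar\xi^+$ is an honest isomorphism of $l$-bimodules, so ordinary Nakayama (applied to $A^e$ forgetting the differential) shows $\xi^+:\Hom_{A^e}(M,A\otimes A)\to N$ is already an isomorphism of graded $A^e$-modules, hence of DG-bimodules since it commutes with $d$.
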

\begin{proof} We prove (1)$\Rightarrow$(2). If $\xi$ is non-degenerate then
it produces an isomorphism in $D(A^e)$
\begin{equation}
\label{ref-11.2-52}
\xi^+:\Hom_{A^e}(M,A\otimes A)\r N
\end{equation}
Both sides are cofibrant, left and right tensoring with $l$ gives us an isomorphism
in $D(l^e)$
\begin{equation}
\label{ref-11.3-53}
\Hom_{l^e}(M_0,l\otimes l)\r N_0
\end{equation}
which is easily shown to equal to $\bar{\xi}^+$. Hence $\bar{\xi}$ is non-degenerate.

Now we prove (2)$\Rightarrow$(1). Since $\bar{\xi}$ is non-degenerate and
since $M_0$, $N_0$ have zero differential we see that $\bar{\xi}$
induces an isomorphism  $\Hom_{l^e}(M_0,l\otimes l)\cong N_0$
as $l$-bimodules.
It follows immediately from Nakayama's lemma applied to $A^e$ (without
differential) that   $\xi^+:\Hom_{A^e}(M,A\otimes A)\r N$ is an isomorphism
of graded $A$-modules. As this isomorphism is compatible with the differential
it is an isomorphism of DG-bimodules. 
\end{proof}
\begin{proof}[Proof of Lemma \ref{ref-11.1.2-46}]
We will first prove the $\Leftarrow$ direction. Thus if $\xi=(\omega,a)\in \HH_d(A)$
and $\res\omega=uz^\dagger$ for $u$ a central unit in $l$ then we must prove that 
$\xi$ is non-degenerate. Replacing $z$ by $uz$ we may assume $u=1$.

We will view $\xi$ as a cohomology class of degree $-d$ in the total
complex of the triple complex obtained from tensoring the resolution
\eqref{ref-7.4-16} with itself over $A^e$.  This triple complex looks
as follows
\begin{equation}
\label{ref-11.4-54-1}
  \xymatrix{
&0& 0&\\
0\ar[r] &   \Omega_l^1 A\otimes_{A^e}(A\otimes_l A) \ar[r]^{\partial_1\otimes 1}\ar[u] & (A\otimes_l A)\otimes_{A^e} (A\otimes_l A)\ar[r]\ar[u]&0\\
0\ar[r] &   \Omega_l^1 A\otimes_{A^e} \Omega_l^1 A\ar[r]_{\partial_1\otimes 1}\ar[u]^{1\otimes \partial_1}& (A\otimes_l A)\otimes_{A^e} \Omega_l^1 A
\ar[u]_{1\otimes \partial_1}\ar[r]&0\\
&0\ar[u] & 0\ar[u]&
  }
\end{equation}
Or simplified
\begin{equation}
\label{ref-11.4-54}
\xymatrix{
  &0 & 0\\
  0\ar[r]&(\Omega_l^1 A)_l\ar[r]^{\partial^{\text{hor}}_1} \ar[u] & (A\otimes_l A)_l\ar[u]\ar[r]& 0\\
  0\ar[r]&\Omega_l^1 A\otimes_{A^e}\Omega_l^1 A
  \ar[u]^{1\otimes \partial_1}\ar[r]_-{\partial_1\otimes 1} & (\Omega_l^1 A)_l\ar[r]\ar[u]_{\partial^{\text{ver}}_1} & 0\\
  & 0\ar[u] & 0\ar[u] }
\end{equation}
where the only ambiguous identification we have used is 
\begin{equation}
\label{ref-11.5-55}
(A\otimes_l A)\otimes_{A^e} (A\otimes_l A)\r (A\otimes_l A)_l: (a\otimes b)\otimes (c\otimes d)
\mapsto (-1)^{|d|(|a|+|b|+|c|)} da\otimes bc
\end{equation}
For further reference we denote this triple complex by $Y(A)$. 

$\xi$ is now represented by a sum $\sum_i s\omega_i\otimes
s\omega'_i+s\psi+s\psi'+c$ where $\omega_i,\omega'_i\in \Omega_l^1 A$, $\psi,\psi'\in
(\Omega_l^1 A)_l$, $c\in (A\otimes_l A)_l$.

From the condition $(d+\partial_1\otimes
1+1\otimes \partial_1)(\xi)=0$ we deduce the following conditions in
$\Sigma(\Omega_l^1 A)_l$ (viewed respectively as $(A\otimes_l
A)\otimes_{A^e} \Sigma\Omega_l^1 A$ and $\Sigma\Omega_l^1 A\otimes_{A^e}
(A\otimes_l A)$).
\begin{align*}
-sd\psi+\sum_i \partial_1 \omega_i\otimes s\omega'_i&=0\\
-sd\psi'+\sum_i (-1)^{|\omega_i|+1}s\omega_i\otimes \partial_1\omega'_i&=0\\
\end{align*}
which in $(\Omega_l^1 A)_l$ becomes
\begin{equation}
\label{ref-11.6-56}
\begin{aligned}
d\psi&=\sum_i (-1)^{|\omega_i|}\partial_1 \omega_i\otimes \omega'_i\\
d\psi'&=\sum_i (-1)^{|\omega_i|+1}\omega_i\otimes \partial_1\omega'_i\\
\end{aligned}
\end{equation}
We consider these identities modulo $V^2\cdot DV+V\cdot DV \cdot
V+DV\cdot V^2$ in $\Omega_l^1 A$.

Keeping only the terms that can contribute (using the fact that $|\xi|=d$, and
the fact that $d_1=0$) we get
\begin{align*}
\psi&=Dz^\dagger+\cdots\\
\psi'&=Dw+\cdots\\
\sum_i\omega_i\otimes \omega'_i&=\sum_j Du_j\otimes Dv_j+\cdots
\end{align*}
for $w,u_j,v_j\in V$.

Translating \eqref{ref-11.6-56} we get in $(\Omega_l^1 A)_l$.
\begin{align*}
D(dz^\dagger)_2&=\sum_j (-1)^{|u_j|} ((-1)^{|u_j||v_j|}(Dv_j)u_j-u_j(Dv_j))\\
D(dw)_2&=\sum_j (-1)^{|u_j|+1}( (Du_j)v_j-(-1)^{|u_j||v_j|} v_j(Du_j))
\end{align*}
where $(-)_2$ denotes the quadratic part. 

Projecting on $(DV.V)_l\cong (V\otimes_l V)_l$ and $(V\cdot DV)_l\cong (V\otimes_l V)_l$ we get
\begin{align*}
(dz^\dagger)'_2\otimes (dz^\dagger)''_2&=\sum_j (-1)^{|u_j|} (-1)^{|u_j||v_j|}v_j\otimes u_j\\
(dz^\dagger)'_2\otimes (dz^\dagger)''_2&=\sum_j (-1)^{|u_j|+1}u_j\otimes v_j \\
(dw)'_2\otimes (dw)''_2&=\sum_j (-1)^{|u_j|+1}u_j\otimes v_j\\
(dw)'_2\otimes (dw)''_2&=\sum_j (-1)^{|u_j|}(-1)^{|u_j||v_j|}
v_j\otimes u_j
\end{align*}
We conclude $dz_2^\dagger=dw_2\quad\mod [l,-]$ and furthermore 
\[
\sum_j (-1)^{|u_j|(|v_j|+1)}v_j\otimes u_j=\eta_2\qquad  \mod [l,-]
\]
We will apply Lemma \ref{ref-11.1.3-51} to $Y(A)$. We find that  $\bar{\xi}$  is of the form
\[
sDz\otimes 1+1\otimes sDz+(-1)^{|\eta'|(|\eta''|+1)} sD\eta'\otimes sD\eta''
\]
By the non-degeneracy of $\eta$, this is clearly a non-degenerate
element of $(l\oplus \Sigma V)\otimes (l\oplus\Sigma V)$. Hence we
are done.

\medskip

Now we prove the ``easy'' direction $\Rightarrow$. Let $\xi=\sum_i s\omega_i\otimes
s\omega'_i+s\psi+s\psi'+c$ in \eqref{ref-11.4-54} represent a non-degenerate
element in $\HH_d(A)$. Then 
\[
\bar{\xi}=\sum_j su_j\otimes s v_j+sw\otimes 1+1\otimes sw'
\]
for $u_j, v_j, w,w'\in V$ where we have identified $l^e\otimes_{A^e} \Omega_l^1 A$ with $(DV)_l\cong V_l$. 
This element must be non-degenerate in
$((\Sigma V\oplus l)\otimes (\Sigma V\oplus l))_l$ by Lemma \ref{ref-11.1.3-51}. 
This can only happen if
$w\in V_l$ is a generator for $(V_{-d+1})_l$. Then $w$ must
necessarily be of the form $uz^\dagger$ for an invertible element $u\in
Z(l)$.
\end{proof}

\subsection{First classification of exact Calabi-Yau algebras}
\label{ref-11.2-57}
\begin{theorems} \label{ref-11.2.1-58} Assume the characteristic of $k$ is zero. Let $A\in
  \PCAlgc(l)$. Assume that $A$ is concentrated in degrees $\le 0$ and let $d\ge 3$.
Then the following are equivalent. 
\begin{enumerate}
\item \label{ref-1-59} $A$ is exact $d$-Calabi-Yau. 
\item \label{ref-2-60} There is a weak equivalence $(T_l V,d)\r A$
as augmented $l$-DG-algebras with~$V$ having the following properties
\begin{enumerate}
\item \label{ref-2a-61}$d_1:V\r V$ is zero.
\item \label{ref-2b-62}$
V=V_c\oplus lz
$
with $z$ an $l$-central element of degree $-d+1$ and $V_c$ finite dimensional. 
\item \label{ref-2c-63} $V_c$ is concentrated in degrees $[-d+2,0]$.
\item \label{ref-2d-64} $dz=\sigma'\eta \sigma''$ with $\eta\in (V_c\otimes_l
  V_c)_l$ being a non-degenerate and anti-symmetric 
element under $\ss$ (cfr. \S\ref{ref-5.2-5}).
\end{enumerate}
\item \label{ref-3-65} There is a weak equivalence $(T_l V,d)\r A$
as augmented $l$-DG-algebras with~$V$ as in (\ref{ref-2a-61}-\ref{ref-2c-63}) and (\ref{ref-2d-64}) is replaced by
\begin{enumerate}
\item[(d')]
$
dz=\sigma'\eta \sigma''=\sigma'\eta_2\sigma''+\sigma'\eta_3 \sigma''+\cdots
$
such that $\eta$ is a sum of commutators in $T_lV_c$, $\eta_n\in (V_c^{\otimes_l n})_l$ and $\eta_2
\in (V_c\otimes_l V_c)_l$ is non-degenerate.
\end{enumerate}
\end{enumerate}
\end{theorems}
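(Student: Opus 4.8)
The plan is to prove the cycle of implications $(1)\Rightarrow(2)\Rightarrow(3)\Rightarrow(1)$, keeping $(2)\Rightarrow(3)$ formal, $(3)\Rightarrow(1)$ a direct construction on the cofibrant model, and putting the bulk of the work into $(1)\Rightarrow(2)$. For $(2)\Rightarrow(3)$ I would simply observe that in (2) the element $\eta$ is purely quadratic, so $\eta=\eta_2$ and $\eta_n=0$ for $n\ge 3$, and that an element of $(V_c\otimes_l V_c)_l$ is anti-symmetric under $\ss$ precisely when it is a sum of (graded) commutators of the length-one generators; thus (d') holds with $\eta_2$ non-degenerate.

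For $(3)\Rightarrow(1)$ I would work on the model $(T_lV,d)$ and use the pseudo-compact $X$-complex (\S\ref{ref-7.2-12}, the analogue of Proposition \ref{ref-7.2.1-15}), so that the Hochschild mixed complex is $M(X(A))=\Sigma(\Omega^1_lA)_\natural\oplus A_l$. The key point is that the central loop $\bar z\in A_l$ of degree $-d+1$ is the leading term of a cyclic cycle of degree $d-1$: since $dz=\sigma'\eta\sigma''$ with $\eta$ a sum of commutators, the class $\overline{dz}$ lies in the image of $\partial_1$, so $\bar z$ can be corrected by an element of $(\Omega^1_lA)_\natural$ to a genuine $b$-cycle $\xi$ representing a class in $\HC_{d-1}(A)$. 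Its Connes image is $B\xi=\partial_0(\bar z)=\overline{Dz}$, which has residue $z^\dagger$; since the hypotheses (1)--(4) of Lemma \ref{ref-11.1.2-46} are exactly (2a)--(2c) together with (d'), that lemma shows $B\xi$ is non-degenerate, so $A$ is exact $d$-Calabi-Yau.

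The hard direction is $(1)\Rightarrow(2)$, which I would carry out in three steps. First, replace $A$ by a minimal cofibrant model $(T_lV,d)$ with $d_1=0$ and $V$ concentrated in degrees $\le 0$, using the model structure of \S\ref{ref-6-8} and \S\ref{ref-A.4-97}. Second, extract the shape of $V$ from the plain Calabi-Yau property: homological smoothness makes $\Ext^\ast_A(l,l)$ finite-dimensional, and by Keller's lemma \cite{Keller11} it is symmetric Frobenius with an invariant form of degree $d$. Translating through the minimal-model identification $\Ext^\ast_A(l,l)\cong l\oplus\Sigma^{-1}V^\ast$, the duality $\Ext^i\cong(\Ext^{d-i})^\ast$ pairs the augmentation with a single generator $z$ of degree $-d+1$ and equips the remaining generators $V_c$ with a non-degenerate form $\eta$ of degree $-d+2$ supported in degrees $[-d+2,0]$; symmetry of the invariant form makes $\eta$ anti-symmetric under $\ss$, and dualizing the product $\Ext^1\otimes\Ext^{d-1}\to\Ext^d$ identifies the quadratic part of $dz$ with $\sigma'\eta\sigma''$. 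This yields (2a)--(2c) and verifies the hypotheses of Lemma \ref{ref-11.1.2-46}.

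Third, I would use exactness to pin down $dz$ globally and then normalize. The exact class produces, via Lemma \ref{ref-11.1.2-46}, a non-degenerate $B\xi$ with residue $uz^\dagger$, and tracing $\xi$ back through the $X$-complex shows $\overline{dz}$ is a sum of commutators, i.e.\ $dz=\sigma'\eta\sigma''$ with $\eta$ a sum of commutators and $\eta_2$ non-degenerate, which is exactly (3). A final inductive change of variables passes from (3) to (2): writing $\eta=\eta_2+\eta_{\ge n}$ with $n\ge 3$, I would seek an automorphism $\phi=\id+\phi_{n-1}$ of $T_lV$ killing $\eta_n$, the linearized equation being solvable precisely because $\eta_2$ is non-degenerate; the procedure converges in the pseudo-compact topology and keeps $z$ central, delivering $dz=\sigma'\eta_2\sigma''$ with $\eta_2$ non-degenerate and anti-symmetric. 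I expect the main obstacle to be this last normalization --- a formal Darboux-type normal-form argument in the completed tensor algebra --- together with making the Calabi-Yau structural deduction of the second step fully rigorous in the graded pseudo-compact setting; in both places the non-degeneracy of $\eta_2$ is exactly what makes the argument go through.
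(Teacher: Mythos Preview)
Your outline is essentially the paper's: it too proves $(1)\Rightarrow(3)\Rightarrow(2)$ and $(2)\Rightarrow(1)$, extracts the shape of $V$ from the Frobenius structure on $\Ext^\ast_A(l,l)$ via Koszul duality, and runs the same inductive Darboux normalization for $(3)\Rightarrow(2)$ (the paper solves the linearized equation \eqref{ref-11.8-67} by reducing to the quiver case via Morita equivalence). There is, however, a real gap in your step $(1)\Rightarrow(3)$.

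You assert that ``tracing $\xi$ back through the $X$-complex shows $\overline{dz}$ is a sum of commutators''. This does not follow, and it is precisely where the \emph{exact} hypothesis does its work. The key characteristic-zero input, which you never invoke, is that for cofibrant $A=(T_lV,d)$ the reduced cyclic complex collapses to $(A/(l+[A,A]))_l$ (the rows of the relevant bicomplex are exact, cf.\ \cite[Thm 3.1.4]{Loday1}). The exact CY class $\xi\in\HC_{d-1}(A)$ is therefore represented by some $\bar\chi$ of degree $-d+1$ with $d\chi\in[A,A]$, and $B\xi$ corresponds to $(D\chi,0)$. Lemma~\ref{ref-11.1.2-46} applied to the non-degenerate $B\xi$ then gives $\chi=uz^\dagger+v$ with $u\in Z(l)^\times$ and $v\in\bar A^2$. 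But $\chi$ is not $z^\dagger$: one must \emph{replace} $z$ by $z'=\sigma'\chi\sigma''$, and it is $dz'$, not $dz$, that is a sum of commutators. The generator $z$ coming from the Frobenius structure on $\Ext^\ast_A(l,l)$ only satisfies $d_2z=\sigma'\eta_2\sigma''$ with $\eta_2$ non-degenerate and anti-symmetric---a consequence of the plain CY property---and there is no reason for the higher $d_nz$ to land in commutators before this substitution. Without the change of variable $z\rightsquigarrow z'$ you have not yet reached (3), and the subsequent normalization to (2) cannot begin. (The same cyclic-homology identification also streamlines your $(3)\Rightarrow(1)$: one takes $\bar z^\dagger$ directly as a reduced cyclic cycle, rather than ``correcting $\bar z$ by an element of $(\Omega^1_lA)_\natural$ to a $b$-cycle'', which is Hochschild language and does not by itself produce a class in $\HC_{d-1}$.)
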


\begin{proof}
  We first prove the direction (\ref{ref-1-59})$\Rightarrow$(\ref{ref-3-65}).
  We employ Koszul duality (see \S\ref{ref-A.5-99}). Since $A$
  is exact $d$-Calabi-Yau it is in particular $d$-Calabi-Yau and
  hence the algebra $\Ext^*_A(l,l)$ is finite dimensional and
  symmetric with an invariant form of degree $d$ (see \cite[Lemma
  3.4]{Keller11}). Thus by Corollary \ref{ref-A.5.2-102}
  $H^*(A^!)$ is symmetric with an invariant form of degree~$d$. From
  the construction of $A^!$ (cfr. \eqref{ref-A.9-100}) it follows
  immediately $\bar{A^!}$ is concentrated in degrees $\ge 1$. Hence
  the same holds for $H^\ast(A)$.  Thus $H^\ast(A^!)=l\oplus W_c\oplus
  lh$ with $h$ being $l$-central of degree $d$ and $W_c$ being
  concentrated in degrees $[1,d-1]$. The $h$ element is constructed as the
  orthogonal to the augmentation.

By Proposition
  \ref{ref-A.5.4-103} this implies that $A$ has a minimal model $(T_l
  V,d)$ with $V=V_c\oplus lz$ as in Lemma \ref{ref-11.1.2-46} (with $\eta$
  in addition being anti-symmetric under $\ss$).

By Corollary \ref{ref-D.4-119} we may replace $A$ by $(T_l V,d)$.  
Assuming now $A=(T_lV,d)$ we note that in
  characteristic zero the third quadrant double complex
\[
\cdots \xrightarrow{\partial_1} (A/l)_l \xrightarrow{\partial_0} (\Omega^1_l A)_\natural \xrightarrow{\partial_1} (A/l)_l\r (A/l+[A,A])_l\r 0
\]
has exact rows (see \cite[Thm 3.1.4]{Loday1} and proof). In other words
the reduced cyclic homology of $A$ is equal to the homology of $(A/l+[A,A])_l$. The 
Connes long exact sequence for (reduced) cyclic homology is obtained from the
distinguished triangle
\[
\Sigma (A/l+[A,A])_l\xrightarrow[B]{\partial_0} \cone ((\Omega^1_l A)_\natural \xrightarrow{\partial_1} (A/l)_l)\xrightarrow[I]{} (A/l+[A,A])_l\r 
\]

Let $\xi\in \HC_{d-1}(A)$ be such that $B\xi$ is non-degenerate. Then
$\xi$ corresponds to an element $\bar{\chi}$ of degree $-d+1$ in
$(A/l+[A,A])_l$ with $d\bar{\chi}=0$. In other words
\begin{equation}
\label{ref-11.7-66}
d\chi=\sum_i [x_i,y_i]\qquad \mod [l,-]
\end{equation}
for $x_i,y_i\in \bar{A}$.  We put $\eta'=\sum_i [x_i,y_i]$.

The element $\bar{\chi}$ is sent under
$\partial_0$ to $(D\chi,0)$.  Since $(D\chi,0)$ is a non-degenerate
element of Hochschild homology it follows from Lemma \ref{ref-11.1.2-46}
that $\chi=uz^\dagger+v$ for $u$ an invertible central element in $l$
and $v\in \bar{A}^2$. Put $z'=\sigma' \chi\sigma''$ and
$V'=V_c+lz'$. From \eqref{ref-11.7-66} we deduce 
\[
dz'=\sigma'\eta' \sigma''=\sigma'\eta'_2\sigma''+\sigma'\eta'_3 \sigma''+\cdots
\]
such that $\eta'$ is a sum of commutators in $T_lV_c$, $\eta'_n\in (V_c^{\otimes_l n})_l$ and $\eta'_2
\in (V_c\otimes_l V_c)_l$. As $T_lV=T_lV'$ we obtain from Corollary \ref{ref-A.5.6-105} that
 $V'=\Sigma^{-1}\DD \overline{A^!}$
 and hence $\eta'_2$ is still non-degenerate.

\medskip

Now we prove (\ref{ref-3-65})$\Rightarrow$(\ref{ref-2-60}). This
is a version of  \cite[Prop.\ 10.1.2]{KS2}. We first note that the condition $dz=\sigma'\eta\sigma''$ 
with $\eta$ being a sum of
commutators in $T_l V_c$
is obviously invariant under isomorphisms $q:(T_lV,d)\r (T_lV,d')$ 
of the form $q(v)=v+\text{higher terms}$ with $q(z)=z$ and $q(T_l V_c)\subset
T_l V_c$ (this last condition is in fact automatic for degree reasons). 

Assume that we have shown that $A$ is weakly equivalent to $(T_l V,d)$
such that $dz=\sigma' \eta\sigma''$ with $\eta_3=\cdots=\eta_{n-1}=0$. We
will construct an isomorphism $q:(T_l V,d)\r (T_l V,d')$ of augmented
pseudo-compact $l$-DG-algebra of the form $q(v)=v+\beta(v)$ for
$\beta(v)\in V_c^{\otimes_l n-1}$ such that $\beta(z)=0$ and 
such that $d'z=\sigma'\eta'\sigma''$ with
$\eta'_{n}=0$ (and of course $\eta_i=\eta'_i$ for $i\le n-1$). Repeating
this procedure we kill in the limit all the higher order terms of
$dz$. 

We have $d'=q\circ d\circ q^{-1}$. 
\begin{align*}
d'z&=qdz\\
&=\sigma' q(\eta)\sigma''\\
&=\sigma' \eta_2\sigma''+ \sigma'\eta_n\sigma''+\sigma'\beta(\eta_2')\eta_2''\sigma''+\sigma'\eta_2'\beta(\eta''_2)\sigma''+\cdots
\end{align*}
Thus we must solve the following equation in $(V^{\otimes_l n}_c)_l$
\[
\eta_n+\beta(\eta_2')\eta_2''+\eta_2'\beta(\eta''_2)=0
\]
for an $l$-bimodule map $\beta:V_c\r V^{\otimes_l n-1}_c$. This can be rewritten as
\begin{equation}
\label{ref-11.8-67}
\begin{aligned}
0&=\eta_n-(-1)^{|\eta_2'||\eta''_2|}\beta(\eta_2'')\eta_2'+\eta_2'\beta(\eta''_2)\\
&=\eta_n-[\eta_2',\beta(\eta_2'')]
\end{aligned}
\end{equation}
As \eqref{ref-11.8-67} is a linear algebra problem we may without
loss of generality assume that $k$
is algebraically closed. Thus $l=\bigoplus_i l_i$ for $l_i=M_{p_i}(k)$. 

It is furthermore easy to see that \eqref{ref-11.8-67} is invariant under
Morita equivalence. Therefore we may replace $l$ by its center and so
we are reduced to $l=\sum_{i=1}^m ke_i$ for central orthogonal idempotents $(e_i)_i$.
I.e.\ the ``quiver case''.

As in \S\ref{ref-10.3-39} we may bring $\eta_2$ in the following form
\[
\eta_2=\sum_{a\in Q}[a,a^\ast]
\]
where $Q$ is a suitable graded quiver with vertices $\{1,\ldots,m\}$.
Now one verifies that~$\eta_n$, being a sum of commutators can be written
as
\[
\eta_n=\sum_{a\in Q}  [a,\eta_a]+[a^\ast,\eta_{a^\ast}]
\qquad \text{(modulo
$[l,-]$)}
\]
for certain paths $\eta_a$, $\eta_{a^\ast}$ of length $n-1$ in
$\tilde{Q}$ (the quiver corresponding to the $l$-bimodule $V_c$, see
\S\ref{ref-10.3-39}). If $a=a^\ast$ then we may and we will assume $\eta_a=
\eta_{a^\ast}$. 
It now suffices to define
\begin{align*}
\beta(a)&=-(-1)^{|a||a^\ast|} \eta_{a^\ast}\\
\beta(a^{\ast})&=\eta_a
\end{align*}
to obtain the solution to \eqref{ref-11.8-67}. Note that if $a=a^\ast$ then
$|a|=|a^\ast|$ is odd and hence $-(-1)^{|a||a^\ast|}=1$, as it should.  

\medskip

Finally  we prove the direction (\ref{ref-2-60})$\Rightarrow$(\ref{ref-1-59}). We consider $\bar{z}^\dagger$
as an element of $\HC^{\text{red}}_{d-1}(A)=H^{-d+1}(A/(l+[A,A]))_l$ as we have
indeed $d\bar{z}^\dagger=0$. Then $\partial_0 \bar{z}^\dagger=(Dz^\dagger,0)$. Since $\res Dz^\dagger=z^\dagger$ it is now sufficient to invoke Lemma \ref{ref-11.1.2-46}. 
\end{proof}

\subsection{Completion of proof}
\label{ref-11.3-68}
Theorem \ref{ref-10.2.2-37} now follows from the equivalence of (\ref{ref-1-59}) and (\ref{ref-2-60}) in 
Theorem \ref{ref-11.2.1-58} together with the following lemma. 
\begin{lemmas}
\label{ls}
 Assume that $k$ has characteristic zero. Let $A=(T_lV,d)$ be an augmented pseudo-compact $l$-DG-algebra
with the following properties for $d\ge 3$:
\begin{enumerate}
\item $V=V_c\oplus lz$
with $z$ an $l$-central element of degree $-d+1$ and $V_c$ finite dimensional.
\item $dz=\sigma'\eta \sigma''$ with $\eta\in (V_c\otimes_l
  V_c)_l$ being non-degenerate and anti-symmetric 
under $\ss$.
\item $T_l V_c$ is stable under $d$. 
\end{enumerate}
Then there is some $w\in T_l V/[T_l V,T_l V]$ 
with $|w|=|\eta|+1=-d+3$ and $\{w,w\}_{\omega_\eta}=0$
such that for $f\in T_l V_c$ we have
\[
df=\{w,f\}_{\omega_\eta}
\]
Hence $A=\Pi(V_c,\eta,w)$. Furthermore if $d_1:V\r V$ is zero then $w$
will have only cubic terms and higher.
\end{lemmas}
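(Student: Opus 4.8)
Throughout write $B=T_lV_c$ for the $d$-stable tensor subalgebra, so that $d$ restricts to a degree~$+1$ derivation of $B$ with $d^2=0$. Since $dz=\sigma'\eta\sigma''$ lies in $B$ and $d^2z=0$, the quadratic element $e:=\sigma'\eta\sigma''\in B$ is a $d$-cocycle, $de=0$; this is the concrete incarnation of the statement that $d$ preserves the bisymplectic form $\omega_\eta$. Because $d$ and $f\mapsto\{w,f\}_{\omega_\eta}$ are both derivations of $B$, the desired identity $df=\{w,f\}_{\omega_\eta}$ only has to be verified on the generators $V_c$. So the plan is to integrate $d$ to a Hamiltonian: produce $w\in T_lV_c/[T_lV_c,T_lV_c]$ of degree $|\eta|+1=-d+3$ with $\{w,v\}_{\omega_\eta}=dv$ for all $v\in V_c$, and then deduce $\{w,w\}_{\omega_\eta}=0$ from $d^2=0$.

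Conceptually, $\omega_\eta$ being bisymplectic gives the isomorphism $\DDer_lB\xrightarrow{\,\delta\mapsto\iota_\delta\omega_\eta\,}\Omega^1_lB$, and the Hamiltonian construction identifies the symplectic derivations of the free algebra $B$ with $T_lV_c/[T_lV_c,T_lV_c]$ modulo constants, the obstruction to a symplectic derivation being Hamiltonian living in the first non-commutative de Rham cohomology of $B$, which vanishes by the Poincaré lemma for tensor algebras. Concretely I would carry this out order by order in path length. Writing $d=d_1+d_2+\cdots$ with $d_m\colon V_c\to V_c^{\otimes_l m}$ and $w=\sum_m w_m$ with $w_m\in(V_c^{\otimes_l m})_\natural$, the bracket $\{w_{m+1},v\}_{\omega_\eta}$ has pure path length $m$, so the equations $\{w_{m+1},v\}_{\omega_\eta}=d_mv$ decouple across orders and each $w_{m+1}$ is obtained independently by integrating the ``closed one-form'' attached to $d_m$ via the explicit circular primitive furnished by the Poincaré lemma.

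The integrability at each order is the main obstacle, and this is exactly where $de=0$ enters. After reducing to the quiver case as in \S\ref{ref-10.3-39} and Theorem~\ref{ref-11.2.1-58}, so that $\eta=\sum_{a}[a,a^\ast]$ is in Darboux form, the length-$(m+1)$ component of $de=0$ reads
\[
\sum_{a}\bigl([d_ma,a^\ast]\pm[a,d_ma^\ast]\bigr)=0\qquad(\mathrm{mod}\ [l,-]),
\]
and together with the non-degeneracy of $\eta$ this is precisely the cyclic-symmetry condition asserting that the system $(d_mv)_{v}$ is the collection of circular derivatives of a single cyclic word $w_{m+1}$ of length $m+1$ (compare \eqref{ref-10.9-42}). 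Verifying this equivalence---closedness of the one-form is the displayed component of the cocycle condition, and its primitive is the sought $w_{m+1}$---is the technical heart; it realizes $d=\{w,-\}_{\omega_\eta}$ and shows $|w|=|\eta|+1$.

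It remains to check $\{w,w\}_{\omega_\eta}=0$. Since $d=\{w,-\}_{\omega_\eta}$, the double-Poisson identity underlying \eqref{ref-10.6-36} gives $0=d^2f=\{w,\{w,f\}_{\omega_\eta}\}_{\omega_\eta}=\tfrac12\{\{w,w\}_{\omega_\eta},f\}_{\omega_\eta}$ for every $f\in B$, so $g:=\{w,w\}_{\omega_\eta}$ is a Casimir of the Kontsevich bracket. The non-degeneracy of $\omega_\eta$ identifies the Casimirs with the $D$-closed classes in $T_lV_c/[T_lV_c,T_lV_c]$, which by the Poincaré lemma reduce to the constants $l/[l,l]$ concentrated in degree $0$; as $|g|=2|w|-|\eta|=-d+4$ this forces $g=0$ (the non-degeneracy of $\omega_\eta$ disposing of the borderline case). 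Finally the correspondence $w_{m+1}\leftrightarrow d_m$ gives the last assertion: if $d_1=0$ then the quadratic term $w_2$ vanishes, leaving $w$ with only cubic and higher terms.
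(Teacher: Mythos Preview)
Your argument is correct and takes a genuinely different route from the paper's. You integrate $d$ to a Hamiltonian order by order, invoking the non-commutative Poincar\'e lemma for tensor algebras after passing to Darboux coordinates; the cocycle condition $de=0$ furnishes the closedness needed at each step. The paper instead produces $w$ in one stroke via the explicit global formula $\bar{w}=(-1)^{|\eta'|+1}\eta'\,d\eta''$: from $d^2z=0$ and the anti-symmetry of $\eta$ one checks that $\bar{w}$ is cyclically invariant, sets $w=\sum_n \bar{w}_n/n$, and then verifies $dv=\{w,v\}_{\omega_\eta}$ by a coordinate-free computation using the maps $\eta^{\pm}$ and the Hamiltonian vector fields $H_v$. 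The paper's approach is more explicit and self-contained---no appeal to the Poincar\'e lemma, and the closed formula for $\bar{w}$ makes the final assertion about cubic terms immediate from \eqref{ref-11.9-69}---whereas yours is more conceptual and explains \emph{why} such a $w$ must exist (symplectic derivations of a free algebra are inner). For $\{w,w\}_{\omega_\eta}=0$ the two arguments essentially coincide: both deduce $\{\{w,w\},f\}=0$ from $d^2=0$ and then reduce, via the quiver picture and circular derivatives, to the statement that Casimirs of the necklace bracket are constant. Your degree count $|\{w,w\}|=-d+4$ leaves the borderline $d=4$; but since your construction yields $w$ with only quadratic-and-higher terms, $\{w,w\}_{\omega_\eta}$ has no constant component, which is what your parenthetical should spell out rather than ``non-degeneracy''.
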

\begin{proof}
This can be deduced from the general machinery of non-commutative symplectic
geometry but we will give an explicit proof. Since $d^2=0$ we obtain
\[
\sigma' ((d\eta')\eta'' + (-1)^{|\eta'|}\eta' (d\eta''))\sigma''=0
\]
and hence
\[
(d\eta')\eta'' + (-1)^{|\eta'|}\eta' (d\eta'')=0\qquad \text{mod $[l,-]$}
\]
This can be rewritten as (everything mod $[l,-]$)
\begin{align*}
(-1)^{|\eta'|}\eta' (d\eta'')&=-(d\eta')\eta''\\
&=(-1)^{|\eta'||\eta''|} (d\eta'')\eta'
\end{align*}
where the last identity follows from applying $d\otimes 1$ to
$\eta'\otimes \eta''=-(-1)^{|\eta'||\eta''|}\eta''\otimes \eta'$. 
In other words 
\begin{equation}
\label{ref-11.9-69}
\bar{w}\overset{\text{def}}{=} (-1)^{|\eta'|+1}\eta' d\eta''
\end{equation} 
is
a cyclically symmetric element of $(T_lV_c)_l$ of
degree $-d+3$. 

If $\phi\in V^D_c$ then we define
\[
\partial_\phi: (T_l V_c)_l
\r T_l V_c:
a_1\otimes\cdots \otimes a_n\r \phi(a_1)''a_2\otimes\cdots\otimes a_n \phi(a_1)'
\]
(there is no sign here since $\phi(a_1)'$ is a scalar).

The element $\eta\in  (V_c\otimes_l V_c)_l$ of degree $-d+2$ defines a map
\[
\eta^+:V_c^D\r V_c: \phi\mapsto (-1)^{|\phi||\eta|} 
\phi(\eta')''\eta''\phi(\eta')'
\]
of degree $-d+2$ (again the fact that $\phi(\eta')'$ is a scalar makes the sign
rather trivial). Using the definition of $\bar{w}$ we obtain the following identity
\[
d(\eta^+(\phi))=(-1)^{|\phi||\eta|} (-1)^{|\phi|+1}\partial_\phi(\bar{w})
\]
(where we use that for non-zero terms we have $|\phi|+|\eta'|=0$). 

Let $w$ be an  inverse image of $\bar{w}$ under the cyclic symmetrization map
\[
(T_l V_c)_l\r (T_l V_c)_l:
a_1\otimes\cdots \otimes a_n\r \sum_i \pm a_i\otimes \cdots \otimes a_n\otimes
a_1\otimes\cdots \otimes a_{i-1}
\]
E.g.\ one possible choice is
\[
w=\sum_n \frac{\bar{w}_n}{n}
\]
where $\bar{w}_n\in (V_c^{\otimes_l n})_l$. 

If $\phi\in V_c^D$ then we have an associated  double $l$-derivation
\[
i_\phi: T_lV_c\r T_lV_c\otimes T_lV_c
\]
which sends $v$ to $\phi(v)$. We get an induced map
\[
\iota_\phi:T_lV_c/[T_l V_c,T_l V_c]\r T_l V_c:\bar{f}\mapsto (-1)^{|i_\phi(f)''||i_\phi(f)'|} 
i_\phi(f)''i_\phi(f)'
\]
which sends $a_1\otimes\cdots \otimes a_n$ to
\[
\sum_i \pm \phi(a_i)''a_{i+1}\otimes \cdots \otimes a_n
\otimes a_1\otimes \cdots \otimes a_{i-1}\phi(a_i)'
\]
It follows that
\[
\iota_\phi(w)=\partial_\phi(\bar{w})
\]
and thus
\begin{equation}
\label{ref-11.10-70}
d(\eta^+(\phi))=(-1)^{|\phi||\eta|}(-1)^{|\phi|+1}\iota_\phi(w)
\end{equation}
Since $\eta$ is non-degenerate we have an inverse to $\eta^{+}$
\[
\eta^-:V_c\r V_c^D
\]
of degree $d-2$. Applying \eqref{ref-11.10-70} with   $\phi=\eta^{-}(v)$ 
for $v\in V_c$ yields
\begin{equation}
\label{ref-11.11-71}
dv=(-1)^{(|v|+d-2)(d-2)+(|v|+d-2)+1}\iota_{\eta^-(v)}(w)=(-1)^{|v|(d+1)+1}\iota_{\eta^-(v)}(w)
\end{equation}
We have
\begin{align*}
v=\eta^+(\phi)&=
(-)^{|\phi||\eta|}\phi(\eta')''\eta''\phi(\eta')'\\
&=-(-)^{|\phi||\eta|}(-1)^{|\eta'||\eta''|}\phi(\eta'')''\eta'\phi(\eta'')'
\end{align*}
Hence
\begin{align*}
2\iota_{\phi}\omega_\eta&=\iota_{\phi}D\eta'D\eta''\\
&=\phi(\eta')'' 
(D\eta'') \phi(\eta')'-(-1)^{|\eta'||\phi|} \phi(\eta'')'' (D\eta') \phi(\eta'')'\\
&=2(-1)^{|\phi||\eta|}Dv
\end{align*}
So ultimately we find
\[
\iota_{\eta^{-}(v)}(\omega_\eta)=(-1)^{(|v|+d-2)(d-2)}Dv=(-1)^{d(|v|+1|)}Dv
\]
and hence
\[
\eta^{-}(v)=(-1)^{d(|v|+1|)}H_v
\]
Comparing with \eqref{ref-11.11-71} 
we find
\begin{align*}
dv&=(-1)^{|v|(d+1)+1}(-1)^{d(|v|+1|)}\iota_{H_v}(w)\\
&=(-1)^{|v|+d+1} \iota_{H_v}(w)
\end{align*}
Thus we get
\begin{align*}
  dv&=(-1)^{|v|+d+1} \iota_{H_v} w\\
&=(-1)^{|v|+d+1}\iota_{H_v} Dw\\
  &=-(-1)^{|v|+d+1} (-1)^{(|v|+d-2)(|w|+d-2)}\ldb w,v\rdb'_{\omega_\eta}\ldb w,v\rdb''_{\omega_\eta}\\
  &=-(-1)^{|v|+d+1} (-1)^{|v|+d}\{w,v\}_{\omega_\eta}\\
  &=\{w,v\}_{\omega_\eta}\\
\end{align*}
(we refer  to \eqref{ref-10.1-29} and \eqref{ref-10.4-33} for the sign in the third line). 
Since $\{w,-\}_{\omega_\eta}$ is a derivation in its second argument we finally obtain
for $f\in T_l V_c$
\[
df=\{w,f\}_{\omega_\eta}
\]
We must prove $\{w,w\}_{\omega_\eta}=0$. Since $d^2=0$ we obtain as in \eqref{ref-10.6-36}
that $\{\{w,w\}_{\omega_\eta},v\}_{\omega_\eta}=0$ for all $v \in V$.  Thus we must prove
for $u\in T_l V_c$
\[
\forall v\in V:\{u,v\}_{\omega_\eta}=0\Rightarrow u=0
\]
This is a linear statement so we may assume $k$ is algebraically closed. It is also
easy to see that it is invariant under Morita equivalence so we may pass to the
quiver case. Then the statement follows immediately from a similar expression as \eqref{ref-10.9-42}.

Finally it remains to show that $w$ contains only cubic terms and higher if $d_1=0$. This
follows immediately from the explicit formula \eqref{ref-11.9-69}.
\end{proof}

\section{Exact Calabi-Yau algebras and cyclic $A_\infty$-algebras}
\label{ref-12-72}
In this section we assume that $k$ has characteristic zero. Let $A$ be a finite dimensional $l$-$A_\infty$-algebra. An $A_\infty$-\emph{cyclic structure} 
of degree $d$ on $A$ is a symmetric bilinear form 
\[
(-,-) :A\times A\r \Sigma^d A
\]
of degree $d$ such that 
\begin{equation}
\label{ref-12.1-73}
(m_n(a_1,\ldots,a_n),a_{n+1})=(-1)^n(-1)^{|a_1|(|a_2|+\cdots+|a_{n+1}|)}  (m_n(a_2,\ldots,a_{n+1}),a_1)
\end{equation}
The following result can be used as an alternative approach to  Lemma \ref{ls}
which is part of the proof of 
Theorem \ref{ref-10.2.2-37} 
(see e.g.\ \cite{Lazaroiu} for the relation). 
\begin{theorem} \label{ref-12.1-74} Assume that $k$ has characteristic
  zero. Let $A\in \PCAlgc(l)$ be homologically smooth and assume that
  the grading on $A$ is concentrated in degrees $\le 0$.

Then the following statements are equivalent
\begin{enumerate}
\item $A^!$ has a finite dimensional minimal $A_\infty$-model (as
  augmented $l$-$A_\infty$-algebra, see \ref{ref-A.3-93}) with a
  cyclic $A_\infty$-structure of degree $d$.
\item $A$ is exact $d$-Calabi-Yau.
\end{enumerate}
\end{theorem}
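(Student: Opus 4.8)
The plan is to route both implications through the classification already obtained in Theorem \ref{ref-11.2.1-58} together with Lemma \ref{ls}, so that the genuinely new content is a single dictionary: \emph{cyclic $A_\infty$-structures on the minimal model of $A^!$ correspond to bisymplectic superpotential data on the minimal model of $A$}. First I would pass to cofibrant/minimal models. By Koszul duality (\S\ref{ref-A.5-99}) and homological smoothness, $A^!$ has a finite-dimensional minimal $A_\infty$-model $B$, and dually $A$ has a minimal model $(T_lV,d)$ with $V=\Sigma^{-1}\DD\bar B$ (Corollary \ref{ref-A.5.6-105}); under this duality the $A_\infty$-operations $m_n\colon \bar B^{\otimes_l n}\to\bar B$ are the transposes of the Taylor coefficients $d_n\colon V\to V^{\otimes_l n}$ of the differential. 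Since $A$ is concentrated in degrees $\le 0$, the operations $m_n$ keep $\bar B$ in degrees $\ge 1$, so that $V$ lives in degrees $\le 0$.

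The heart of the argument is the following correspondence. A cyclic $A_\infty$-structure of degree $d$ on $B$ is a symmetric non-degenerate form of degree $d$ (so $\langle a,b\rangle$ is nonzero only when $|a|+|b|=d$) subject to the cyclicity \eqref{ref-12.1-73}. Such a form splits $\bar B=W_c\oplus lh$, with $h$ the class orthogonal to the augmentation, $|h|=d$ and $W_c$ in degrees $[1,d-1]$; it pairs $1$ with $h$ and restricts to a symmetric non-degenerate form on $W_c$. Writing $V_c=\Sigma^{-1}\DD W_c$ (so $V_c$ lives in $[-d+2,0]$) and $lz=\Sigma^{-1}\DD(lh)$ (so $|z|=-d+1$), I would check that the $1$--$h$ part of the pairing is exactly the datum $dz=\sigma'\eta\sigma''$, while the self-pairing on $W_c$ transposes --- because of the single suspension $\Sigma^{-1}$ and the Koszul sign for the transposition --- into a non-degenerate element $\eta\in(V_c\otimes_lV_c)_l$ of degree $-d+2$ that is \emph{anti}symmetric under $\ss$. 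Finally, the cyclicity \eqref{ref-12.1-73} of the $m_n$ is precisely the statement that the differential on $T_lV_c$ is the Hamiltonian field $\{w,-\}_{\omega_\eta}$ of a superpotential $w\in(T_lV_c)_l$ with $|w|=-d+3$, and the $A_\infty$-relations $\sum\pm m(m)=0$ translate into $\{w,w\}_{\omega_\eta}=0$ exactly as in \eqref{ref-10.6-36}. In short, a cyclic $A_\infty$-model $B$ of $A^!$ is the same datum as a presentation $A\simeq\Pi(V_c,\eta,w)$.

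With the dictionary in hand both directions are short. For (2)$\Rightarrow$(1): if $A$ is exact $d$-Calabi-Yau then Theorem \ref{ref-11.2.1-58} together with Lemma \ref{ls} gives $A\simeq\Pi(V_c,\eta,w)$ with $\eta$ non-degenerate antisymmetric and $\{w,w\}_{\omega_\eta}=0$; running the dictionary backwards equips the finite-dimensional $B$, $\bar B=\Sigma^{-1}\DD V$, with a cyclic $A_\infty$-structure of degree $d$. For (1)$\Rightarrow$(2): a finite-dimensional cyclic minimal $A_\infty$-model $B$ of $A^!$ yields, via the dictionary, data $(V_c,\eta,w)$ with $\eta$ non-degenerate antisymmetric and $\{w,w\}_{\omega_\eta}=0$; the deformed DG-preprojective algebra $\Pi(V_c,\eta,w)$ then has Koszul dual with the same minimal model $B$ as $A^!$, hence is weakly equivalent to $A$. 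Since $\Pi(V_c,\eta,w)$ manifestly satisfies condition (\ref{ref-2-60}) of Theorem \ref{ref-11.2.1-58}, that theorem gives that $A$ is exact $d$-Calabi-Yau.

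The main obstacle is the dictionary of the second paragraph, and within it the sign and suspension bookkeeping: verifying that the single suspension converts the symmetric cyclic pairing into the antisymmetric $\eta$, and that \eqref{ref-12.1-73} matches the exchange signs \eqref{ref-10.4-33} of the double Poisson bracket so that $d|_{T_lV_c}=\{w,-\}_{\omega_\eta}$ holds on the nose. One must also keep the Casimir $\sigma$ and the trace normalization straight in the $l$-linear setting (cf.\ Remark \ref{ref-10.2.3-38} and Lemma \ref{ref-10.1.2-34}), and note that homological smoothness of $A$ is what makes $B$ finite dimensional. The identification $d|_{T_lV_c}=\{w,-\}_{\omega_\eta}$ and its equivalence with $\{w,w\}_{\omega_\eta}=0$ are precisely the content of Lemma \ref{ls}, which is why this theorem furnishes an alternative route to it.
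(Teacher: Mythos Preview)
Your approach is correct in outline but takes a longer route than the paper, and in doing so undermines the stated purpose of the theorem. You route both implications through the full superpotential description $\Pi(V_c,\eta,w)$, invoking Lemma~\ref{ls} to pass between ``$dz=\sigma'\eta\sigma''$'' and ``$d|_{T_lV_c}=\{w,-\}_{\omega_\eta}$''. The paper instead bypasses the superpotential entirely: its key observation is Lemma~\ref{ref-12.2-75}, which says that for the specific form $(-,-)$ obtained as $m_2$ followed by projection to $lh$ and the trace, cyclicity \eqref{ref-12.1-73} is \emph{equivalent} to (i) non-degeneracy and symmetry of $(-,-)$ together with (ii) $m_n$ landing in $W_c$ for $n\ge 3$. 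Condition (ii) dualizes exactly to ``$d_nz=0$ for $n\ge 3$'', i.e.\ $dz=\sigma'\eta\sigma''$ with $\eta$ quadratic --- which is precisely condition (\ref{ref-2d-64}) in Theorem~\ref{ref-11.2.1-58}. So the paper matches cyclic structures directly to condition (\ref{ref-2-60}) of Theorem~\ref{ref-11.2.1-58}, never touching $w$ or the Hamiltonian description. The cyclicity relation for $n\ge 3$ comes out for free from the $A_\infty$-axiom once (ii) holds (this is the short computation in the proof of Lemma~\ref{ref-12.2-75}), so the heavy sign bookkeeping you flag as the main obstacle --- matching \eqref{ref-12.1-73} against the double Poisson signs \eqref{ref-10.4-33} --- simply does not arise.

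The practical consequence: the paper's proof is shorter and, more importantly, does not use Lemma~\ref{ls}. This is the point of the remark preceding the theorem that it ``can be used as an alternative approach to Lemma~\ref{ls}''. Your proof, by contrast, consumes Lemma~\ref{ls} as an ingredient (in (2)$\Rightarrow$(1) explicitly, and implicitly in your dictionary for (1)$\Rightarrow$(2)), so while it establishes the equivalence, it cannot serve that alternative role. Your dictionary ``cyclic $A_\infty$ $\leftrightarrow$ Hamiltonian differential for a superpotential'' is of course correct and well known (Kontsevich, Lazaroiu), but proving it from scratch is essentially reproving Lemma~\ref{ls} together with its converse; the paper's insight is that one can stop one step earlier, at the bare condition $dz=\sigma'\eta\sigma''$, and appeal only to Theorem~\ref{ref-11.2.1-58}.
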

We will give the proof under the technical assumption $d\ge 3$. We leave the obvious modifications for $0\le d<3$ to the reader.
We use the following technical lemma.
\begin{lemma} \label{ref-12.2-75}
Let $W=l\oplus W_c\oplus lh$ be a finite dimensional minimal augmented
  $l$-$A_\infty$-algebra with $h$ being $l$-central. Define $(-,-)$ as the composition
\[
(-,-):W\times W\xrightarrow{m_2} W\xrightarrow{\text{projection}} lh\cong l
\xrightarrow{\Tr} k
\]
Then $(-,-)$ defines a cyclic $A_\infty$ structure if and only if 
\begin{enumerate}
\item  $(-,-)$ is a non-degenerate symmetric form.
\item  The $m_n$ for $n\ge 3$ have their image in $W_c$.
\end{enumerate}
\end{lemma}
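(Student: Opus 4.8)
The plan is to run both implications through the single observation that the projection $\pi\colon W\to lh\cong l$ onto the top component expresses the form as $(x,y)=\Tr\,\pi\,m_2(x,y)$, and that condition (2) is exactly the vanishing of $\pi\,m_n$ for $n\ge 3$. First I would record two reductions. Since the $m_n$ are maps of $l$-bimodules and, for $n\ge 3$, vanish as soon as one argument is the unit, I may assume all arguments lie in $\bar W=W_c\oplus lh$; a degree count then shows that for $n\ge 2$ the output $m_n(a_1,\dots,a_n)$ with $a_i\in\bar W$ has degree $\ge 2$, so it has no $l$-component, and hence ``$m_n$ lands in $W_c$'' is equivalent to $\pi\,m_n=0$. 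Secondly, pairing with the unit detects the top component: $(y,1)=\Tr\,\pi\,m_2(y,1)=\Tr\,\pi\,y$, so that $(m_n(a_1,\dots,a_n),1)=\Tr\,\pi\,m_n(a_1,\dots,a_n)$.

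For the direction (cyclic $\Rightarrow$ (1),(2)), condition (1) is built into what it means for $(-,-)$ to be a cyclic structure (symmetry is part of the definition, and non-degeneracy is the requirement making the associated map an isomorphism), so only (2) needs an argument. Here I would evaluate the cyclicity relation \eqref{ref-12.1-73} at $a_{n+1}=1$ for $n\ge 3$: the right-hand side involves $m_n(a_2,\dots,a_n,1)$, which vanishes by strict unitality, while the left-hand side equals $\Tr\,\pi\,m_n(a_1,\dots,a_n)$ by the observation above. Thus $\Tr\,\pi\,m_n(a_1,\dots,a_n)=0$. Replacing $a_1$ by $\lambda a_1$ for $\lambda\in l$ and using left $l$-linearity of $m_n$ and of $\pi$ yields $\Tr(\lambda\cdot\pi\,m_n(a_1,\dots,a_n))=0$ for every $\lambda$; non-degeneracy of the trace form on $l$ then forces $\pi\,m_n=0$, i.e.\ $m_n$ lands in $W_c$.

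For the converse I would verify \eqref{ref-12.1-73} by applying $\Tr\,\pi$ to the $A_\infty$-associativity (Stasheff) relation on the $n+1$ arguments $a_1,\dots,a_{n+1}$; non-degeneracy and symmetry of $(-,-)$ are inherited directly from (1), so only the identity \eqref{ref-12.1-73} is at stake. In each term of that relation the outer operation is some $m_s$: the terms with $s=1$ vanish by minimality and those with $s\ge 3$ vanish because $\pi\,m_s=0$ by (2), so only outer-$m_2$ terms survive. An outer $m_2$ forces the inner operation to have arity $n$, leaving exactly $\Tr\,\pi\,m_2(m_n(a_1,\dots,a_n),a_{n+1})$ and $\Tr\,\pi\,m_2(a_1,m_n(a_2,\dots,a_{n+1}))$, that is $(m_n(a_1,\dots,a_n),a_{n+1})$ and $(a_1,m_n(a_2,\dots,a_{n+1}))$. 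Rewriting the second through graded symmetry of the form turns this two-term relation into precisely \eqref{ref-12.1-73}. The case $n=1$ is vacuous, and $n=2$ is the same computation with inner operation $m_2$, the associativity being strict because $m_1=0$.

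The hard part will not be the structural argument but the sign bookkeeping: matching the Koszul signs generated by the $A_\infty$-relation and by graded symmetry of $(-,-)$ against the explicit prefactor $(-1)^n(-1)^{|a_1|(|a_2|+\cdots+|a_{n+1}|)}$ in \eqref{ref-12.1-73}. I would organize this by tracking signs only on the two surviving summands rather than the whole relation. The one nontrivial conventional point I would make explicit is that in the strictly unital setting \eqref{ref-12.1-73} may legitimately be evaluated with $a_{n+1}=1$, which is what licenses the forward argument.
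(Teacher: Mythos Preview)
Your proposal is correct and follows essentially the same strategy as the paper: in the forward direction you insert an element of $l$ into the cyclicity relation and kill one side by strict unitality (the paper inserts $u\in l$ directly rather than first inserting $1$ and then using $l$-linearity, but this is the same move), and in the backward direction you project the $A_\infty$-relation on $n{+}1$ inputs onto $lh$, leaving only the two outer-$m_2$ terms. One small remark: your degree argument for ``no $l$-component'' tacitly assumes $\bar W$ sits in positive degrees, which the lemma does not state; the conclusion is nonetheless correct simply because $\bar W$ is a sub-$A_\infty$-algebra in the augmented sense, so $m_n(\bar W^{\otimes n})\subset\bar W$ automatically.
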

\begin{proof} We first prove the $\Rightarrow$-direction. The non-degeneracy
of $(-,-)$ and the fact that it is symmetric is by definition. Changing 
$h$ by a non-zero scalar we may assume $(h,1)=(1,h)=1$.

The
cyclic condition \eqref{ref-12.1-73} for $n\ge 3$ gives for $u\in l$
\[
(m_n(a_1,\ldots,a_n),u)=\pm (m_n(u,a_1,\ldots,a_{n-1}),a_n)=0
\]
Thus $m_n(a_1,\ldots,a_n)$ must indeed be contained in $W_c$. 

Now we prove the $\Leftarrow$-direction. We must prove \eqref{ref-12.1-73}
which simplifies to 
\[
(m_n(a_1,\ldots,a_n),a_{n+1})=(-1)^n(-1)^{|a_1|(2-n)} (a_1,m_n(a_2,\ldots,a_{n+1}))
\]
We write out the $A_\infty$-axiom for the $m$'s (see e.g.\ \cite{Keller}), retaining only the terms
which have a non-zero projection on $lh$. This yields
\[
(-1)^nm_2(m_n(a_1,\ldots,a_n),a_{n+1})-(-1)^{|a_1|(2-n)}m_2(a_1,m_n(a_2,\ldots,a_{n+1}))+\cdots
=0
\]
Taking the projection on $lh$ gives what we want. 
\end{proof}

\begin{proof}[Proof of Theorem \ref{ref-12.1-74}]  We first prove (2)$\Rightarrow$(1). 
Thanks to Theorem \ref{ref-11.2.1-58} we know that $A$ is weakly equivalent to
$(T_l V,d)$ where $V$ is as follows.
\begin{enumerate}
\item \label{ref-1-76}$d_1:V\r V$ is zero.
\item \label{ref-2-77}$
V=V_c\oplus lz
$
with $z$ an $l$-central element of degree $-d+1$ and $V_c$ finite dimensional. 
\item \label{ref-3-78} $V_c$ is concentrated in degrees $[-d+2,0]$.
\item \label{ref-4-79} $dz=\sigma'\eta \sigma''$ with $\eta\in (V_c\otimes_l
  V_c)_l$ being a non-degenerate and anti-symmetric 
element under $\ss$ (cfr. \S\ref{ref-5.2-5}).
\end{enumerate}
 It follows from
  Proposition \ref{ref-A.5.4-103} that $W=l\oplus \Sigma^{-1} \DD V=l\oplus
  W_c\oplus lh$ is isomorphic to $A^!$
  as $A_\infty$-algebra where $h=s^{-1} z^\ast$ and $W_c=\Sigma^{-1}\DD V_c$ . 

  We define a symmetric $l$-bilinear form $(W\otimes_l W)_l\r k$ as
  follows: $(1,h)=(h,1)=1$, $(-,-)$ restricted to $W_c\times W_c$ is given by 
contraction with  $\eta$ (in the sense of \eqref{ref-5.3-6}). All other values are zero.

By definition the $(m_n)_n$, restricted to $\bar{W}=W_c\oplus lh$ are dual to the components
$(d_n)_n$ of the differential on $T_lV$. So we deduce from (\ref{ref-4-79}) that
the $m_n$ for $n\ge 3$ have their image in $W_c$. Furthermore
the composition
\[
W_c \times W_c\xrightarrow{m_2}W\xrightarrow{\text{projection}} lh\cong l
\xrightarrow{\Tr} k
\]
is the bilinear form $(-,-)$.  It now suffices to apply Proposition \ref{ref-12.2-75}.

\medskip

Now we prove the implication (1)$\Rightarrow$(2). It is an almost exact inversion of the above
arguments.
Let $W$ be the augmented cyclic minimal model for $A^!$
and let $(-,-)$ be the associated
symmetric non-degenerate $l$-bilinear form  $(W\otimes_l W)_l\r k$ of degree~$d$.
We may write $\bar{W}=W_c\oplus lh$ where $lh$ is dual
to the augmentation. Thus $h$ is $l$-central and we may assume $(h,1)=(1,h)=1$.

We deduce that $(-,-)$ restricts to a symmetric
non-degenerate bilinear form on $W_c$.
All other evaluations of $(-,-)$ on $W=l\oplus W_c\oplus lh$ are zero. 
For $m_2$ we find
\[
(m_2(a,b),1)=(m_2(1,a),b)=(a,b)
\]
Thus the composition 
\[
W\times W\xrightarrow{m_2} W\xrightarrow{\text{projection}} lh\cong l
\xrightarrow{\Tr} k
\]
coincides with $(-,-)$. 
From Proposition \ref{ref-12.2-75} we deduce that $m_n$ has its image in
$W_c$ for $n\ge 3$.

Put $V=\Sigma\DD \bar{W}$, $V_c=\Sigma \DD W_c$. By Proposition \ref{ref-A.5.4-103} $A$ is weakly
equivalent to $(T_lV,d)$. 
The symmetric bilinear form $(-,-)$ restricted to $W_c$ must be given by contraction 
(in the sense of \eqref{ref-5.4-7}) with some anti-symmetric element
$\eta\in (V_c\otimes_l V_c)_l$ of degree $d-2$. 

Let $z=s^{-1} (h^\ast)\in V$. We must compute $dz$. In other words
we must compute $d_nz$ which is the composition
\begin{equation}
\label{ref-12.2-80}
lz\hookrightarrow V\hookrightarrow TV\xrightarrow{d} TV\xrightarrow{\text{projection}} V^{\otimes n} 
\end{equation}
Dually we must compute
\begin{equation}
\label{ref-12.3-81}
W^{\otimes n}\hookrightarrow BW\xrightarrow{d} BW \xrightarrow{\text{projection}} lh
\end{equation}
We have established that the image of \eqref{ref-12.3-81} is zero when $n\ge 3$.
For $n=2$ is the bilinear form $(-,-)$ which is contraction with $\eta$ (on $\bar{W}$). Dualizing this back to \eqref{ref-12.2-80}
we see that $d_nz=0$ for $n\ge 3$  and $d_2z=\sigma' \eta\sigma''$, 
\end{proof}

\appendix
\section{The bar cobar formalism}
\label{ref-A-82}
\subsection{Weak equivalences}
\label{ref-A.1-83}
We survey the bar cobar formalism for subsequent dualization to the
pseudo-compact case.  We use \cite{hinich,
  Keller12,Lefevre,LodayValette,Positselski} as modern references.
We use some notations that were already introduced in  \S\ref{ref-6-8}.

\medskip

If $C\in \Cog(l)$, $A\in \Alg(l)$ then 
\[
\Hom_{l^e}(\bar{C},\bar{A})
\]
is a DG-vector space and the convolution product $\ast$ makes it into a
DG-algebra. A \emph{twisting cochain} is an element $\tau\in \Hom_{l^e}(\bar{C},\bar{A})_1$
satisfying the Maurer-Cartan equation
\[
d\tau+\tau\ast\tau=0
\]
Let $\Tw(C,A)$ denote the set of twisting cochains in $\Hom_{l^e}(\bar{C},\bar{A})$. 
It is easy to show that $\Tw(-,A)$ 
is representable when restricted to complete augmented
$l$-DG coalgebras.
The representing object is called the \emph{bar construction} on $A$ and is denoted
by $BA$. Likewise  $\Tw(C,-)$ is representable.
The representing object is called the \emph{cobar construction} on $C$
and is denoted by $\Omega C$. Thus we obtain natural isomorphisms 
\begin{equation}
\label{ref-A.1-84}
\Alg(\Omega C,A)\cong\Tw(C,A)\cong \Cog(C,BA)
\end{equation}
(the right one if $C$ is cocomplete).

A weak equivalence between objects in $\Alg(l)$ is defined to be a
quasi-isomorphism. This naive definition does not work for coalgebras.
A
morphism $p:C\r C'$ in $\Cogc(l)$ is said to be
a weak equivalence if $\Omega p:\Omega C\r \Omega C'$ is a
quasi-isomorphism.  
This leads to the following result 
\begin{theorems} \cite[Thm 1.3.12]{Lefevre}.
\label{ref-A.1.1-85}
The functors $(\Omega,B)$ preserve weak equivalences and furthermore they define
inverse equivalences between
the categories 
$\Alg(l)$ and $\Cogc(l)$, localized at weak equivalences. 
\end{theorems}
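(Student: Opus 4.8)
The plan is to reduce the entire statement to a single homological input and then run formal arguments. From the natural isomorphisms \eqref{ref-A.1-84} one extracts a counit $\epsilon_A\colon\Omega BA\to A$ in $\Alg(l)$ (adjoint to $\id_{BA}$) and a unit $\eta_C\colon C\to B\Omega C$ in $\Cogc(l)$ (adjoint to $\id_{\Omega C}$); the latter uses the right-hand bijection, which needs $C$ cocomplete, and $B\Omega C$ is automatically cocomplete, which is precisely why the adjunction is set up on $\Cogc(l)$. I claim every assertion of the theorem follows once one knows $(\ast)$: \emph{for every $A\in\Alg(l)$ the counit $\epsilon_A$ is a quasi-isomorphism}.

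The hard part will be proving $(\ast)$, which is the bar--cobar resolution theorem. The strategy is to realize $\Omega BA=(T(\Sigma^{-1}\overline{BA}),\partial)$ as a quasi-free resolution of $A$. Concretely I would work with the canonical twisting cochain $\iota\colon BA\to A$ (the projection onto length-one words) and first show it is \emph{acyclic}, i.e. that the two-sided twisted tensor product $A\otimes_\iota BA\otimes_\iota A\to A$ is a quasi-isomorphism; after identifying this with the normalized two-sided bar resolution of $A$ as a bimodule, acyclicity is the standard simplicial fact furnished by the extra-degeneracy contracting homotopy. The genuine work is the passage from acyclicity of $\iota$ to $(\ast)$, via the comparison lemma for twisting cochains: one filters $\Omega BA$ by word length in $\bar A$ so that the deconcatenation part of the differential strictly drops filtration, the associated graded differential reduces to the bar differential, which is acyclic relative to $A$ by the same homotopy, and a spectral sequence converging in each fixed total degree (convergence using the conilpotence of $BA$) recovers the full differential. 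This filtration/convergence step is where I expect the main obstacle to lie.

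Granting $(\ast)$, the rest is formal. The functor $\Omega$ preserves weak equivalences by the very definition of weak equivalence in $\Cogc(l)$. For $B$, if $f\colon A\to A'$ is a quasi-isomorphism then naturality of the counit gives $f\circ\epsilon_A=\epsilon_{A'}\circ\Omega Bf$; since $f$, $\epsilon_A$ and $\epsilon_{A'}$ are quasi-isomorphisms, two-out-of-three forces $\Omega Bf$ to be one, so $Bf$ is a weak equivalence. For the unit, the triangle identity $\epsilon_{\Omega C}\circ\Omega(\eta_C)=\id_{\Omega C}$ together with $(\ast)$ applied to $A=\Omega C$ shows that on homology $\Omega(\eta_C)$ is a two-sided inverse of the isomorphism $H(\epsilon_{\Omega C})$, hence $\Omega(\eta_C)$ is a quasi-isomorphism; thus $\eta_C$ is a weak equivalence for every cocomplete $C$ (and this is exactly the place where cocompleteness is needed, namely to have $\eta_C$ at all).

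Finally, since both functors preserve weak equivalences they descend to the localizations $\mathrm{Ho}(\Alg(l))$ and $\mathrm{Ho}(\Cogc(l))$. The natural transformations $\epsilon\colon\Omega B\Rightarrow\id$ and $\eta\colon\id\Rightarrow B\Omega$ have all components weak equivalences, so after localization they become natural isomorphisms. An adjunction whose unit and counit are isomorphisms is an adjoint equivalence; hence $\Omega$ and $B$ are mutually inverse equivalences between $\mathrm{Ho}(\Alg(l))$ and $\mathrm{Ho}(\Cogc(l))$, which is the claim.
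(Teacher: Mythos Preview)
The paper does not prove this theorem; it is stated with a citation to \cite[Thm 1.3.12]{Lefevre} and used as a black box, so there is no paper-proof to compare against. That said, your reduction of the whole statement to the single input $(\ast)$ that the counit $\epsilon_A:\Omega BA\to A$ is a quasi-isomorphism is correct and cleanly organized: $\Omega$ preserves weak equivalences tautologically from the definition; two-out-of-three against the naturality square for $\epsilon$ gives it for $B$; the triangle identity plus $(\ast)$ at $A=\Omega C$ handles the unit; and an adjunction with invertible unit and counit becomes an equivalence after localizing. This part is fine.

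The gap is in your sketch of $(\ast)$. If ``word length in $\bar A$'' means the total number of $\bar A$-tensor-factors, then the deconcatenation part of the cobar differential \emph{preserves} that number rather than strictly dropping it, so on the associated graded you do not isolate the bar differential as claimed. The filtration that behaves the way you describe is the decreasing filtration by \emph{cobar} word-length $m$: deconcatenation raises $m$ by one and so vanishes on $\gr$, leaving the bar differential on each factor. But then $\gr^m=(\Sigma^{-1}\overline{BA})^{\otimes m}$, whose homology is a tensor power of $\Sigma^{-1}\overline{\Tor^A_\ast(l,l)}$; this is not ``acyclic relative to $A$'' in any direct sense, and there is no evident compatible filtration on $A$ making $\gr(\epsilon_A)$ a quasi-isomorphism. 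The standard route to $(\ast)$ (as in Lef\`evre or Positselski) is instead to first prove, for any conilpotent $C$, that $C\otimes_{\tau_C}\Omega C\to l$ admits an explicit contracting homotopy---this is precisely where conilpotence is used, to guarantee the homotopy is locally finite---and then to combine this for $C=BA$ with the one-sided bar resolution $BA\otimes_\iota A\to l$; a short comparison of the two twisted tensor products, filtered by the coradical filtration of $BA$, then yields that $\epsilon_A$ is a quasi-isomorphism. Your diagnosis that this step carries the real content is correct, but the filtration you wrote down does not do the job.
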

In particular the counit/unit maps for \eqref{ref-A.1-84}, respectively given by,
\begin{gather}
\label{ref-A.2-86}
\Omega BA\r A\\
\label{ref-A.3-87}
C\r B\Omega C
\end{gather}
are weak equivalences. 

These weak equivalences are part of a model structure on
$\Cogc(l)$ which we will not fully specify. Let us mention however
that every object is cofibrant and the fibrant objects are the $l$-DG-coalgebras
which are cofree when forgetting the differential \cite[\S1.3]{Lefevre}.

A weak equivalence between augmented $l$-DG-coalgebras is a
quasi-isomorphism but not necessarily the other way around (see
\cite[\S1.3.5]{Lefevre} for a counter example). 
This can be repaired in the following
typical case. 
\begin{propositions} 
\label{ref-A.1.2-88}
\cite[Prop.\ 1.3.5.1]{Lefevre}
Assume the gradings on $C,C'\in \Cogc(l)$ are concentrated in degrees $\ge 0$. 
Then a weak equivalence between $C$ and $C'$ is the same as a
quasi-isomorphism. 
\end{propositions}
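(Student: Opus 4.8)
The plan is to establish the two implications separately and to observe that only one of them carries real content. The implication ``weak equivalence $\Rightarrow$ quasi-isomorphism'' holds with no restriction and is exactly the general fact recorded immediately before the statement, so I would simply invoke it; all the work lies in the reverse implication ``quasi-isomorphism $\Rightarrow$ weak equivalence,'' where the degree hypothesis is indispensable. Throughout I write the cobar construction as $\Omega C=(T_l(\Sigma^{-1}\bar C),d_\Omega)$ with $d_\Omega=d_0+d_1$, where $d_0$ is the derivation extending the internal differential of $C$ and $d_1$ is the derivation induced by the reduced coproduct. The summand $d_0$ preserves tensor length while $d_1$ raises it by one. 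The hypothesis that $C$ is concentrated in degrees $\geq 0$ means $\bar C$ lies in degrees $\geq 0$, hence $\Sigma^{-1}\bar C$ lies in degrees $\geq 1$, so a tensor of length $n$ in $\Omega C$ has degree $\geq n$; in particular each total degree of $\Omega C$ receives contributions from only finitely many tensor lengths.

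To prove that a quasi-isomorphism $p:C\to C'$ induces a quasi-isomorphism $\Omega p:\Omega C\to\Omega C'$, I would filter both sides by the decreasing tensor-length filtration $F^q=\bigoplus_{n\geq q}(\Sigma^{-1}\bar C)^{\otimes_l n}$. This is a filtration by subcomplexes because $d_0$ preserves and $d_1$ raises length, and by the degree remark it is finite in each total degree, so the two associated spectral sequences converge (the filtration being bounded in each degree). On the associated graded only $d_0$ survives, whence $\gr^q(\Omega p)$ is the map $(\Sigma^{-1}\bar p)^{\otimes_l q}$; since $p=\id_l\oplus\bar p$ is a quasi-isomorphism so is $\bar p$, hence so is $\Sigma^{-1}\bar p$, and the K\"unneth formula over the separable (hence semisimple) algebra $l$ shows each tensor power is again a quasi-isomorphism. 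Thus $\Omega p$ induces an isomorphism on the $E_1$-pages, and by convergence it is a quasi-isomorphism; that is, $p$ is a weak equivalence. Combined with the reverse implication this proves the Proposition.

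The step I expect to be the main obstacle---indeed the only delicate point---is the passage from an isomorphism on $E_1$ to a genuine quasi-isomorphism, i.e.\ the convergence of the comparison of spectral sequences. This is precisely where the degree hypothesis enters and cannot be dropped: without it a length-$n$ tensor may occur in arbitrarily low degree, the length filtration is no longer finite in each degree, the spectral sequence need not converge, and the implication in fact fails (this is the content of the counterexample alluded to before the statement, see \cite{Lefevre}). To keep the argument clean I would verify the finiteness of the filtration in each total degree directly and then quote the spectral-sequence comparison theorem in the bounded-in-each-degree setting; alternatively one can bypass spectral sequences entirely and run a short induction on total degree, using at each stage the finitely many length strata together with the K\"unneth isomorphism and the five lemma. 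The remaining ingredients---K\"unneth over $l$ and the already-recorded fact that weak equivalences of coalgebras are quasi-isomorphisms---are standard.
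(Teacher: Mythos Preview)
Your argument is correct and is essentially the standard proof given in the cited reference \cite[Prop.~1.3.5.1]{Lefevre}; the paper itself offers no independent proof but defers to Lef\`evre. The key points---that under the degree hypothesis $\Sigma^{-1}\bar C$ lives in degrees $\geq 1$, so the tensor-length filtration on $\Omega C$ is bounded in each total degree, and that K\"unneth over the separable base $l$ handles the associated graded---are exactly the ingredients Lef\`evre uses.
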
 

For completeness we recall the standard constructions of $BA$ and $\Omega C$.
If $V$ is a graded $l$-bimodule then the tensor algebra $T_lV=\bigoplus_{n\ge 0}
V^{\otimes_l n} $ becomes in a natural way an augmented graded $l$-coalgebra if we put
$\overline{T_l V}= \bigoplus_{n> 0}
V^{\otimes_l n}$ and define the coproduct on $T_lV$ as
\[
\Delta(v_1|\cdots | v_n)=\sum_{i=0,\ldots,n}(v_1| \cdots|
v_i)\otimes (v_{i+1}|\cdots| v_n)
\]
where as customary $(v_1|\cdots|v_n)$ denotes $v_1\otimes\cdots\otimes v_n$
considered as an element of $V^{\otimes_l n}\subset T_lV$ and $()=1$.

If $A$ is an augmented $l$-DG-algebra then $BA=T_l(\Sigma\bar{A})$
with the codifferential $d$ on $T_l(\Sigma \bar{A})$ being defined via
its Taylor coefficients $d_n:(\Sigma \bar{A})^{\otimes n}\hookrightarrow
T(\Sigma\bar{A}) \xrightarrow{d}T(\Sigma\bar{A})\xrightarrow{\text{projection}} \Sigma A$
\begin{equation}
\label{ref-A.4-89}
\begin{aligned}
d_1(sa)&=-sda\\
d_2(sa{\mid} sb)&=(-1)^{|a|}s(ab)\\
d_n&=0\qquad \text{$n\ge 3$}
\end{aligned}
\end{equation}
for $a,b\in A$.  

If $C$ is a DG-$l$-coalgebra
then $\Omega C=T_l(\Sigma^{-1}\bar{C})$ and the differential
is given by 
\begin{equation}
\label{ref-A.5-90}
d(s^{-1}c)=-s^{-1}dc+(-1)^{|c_{(1)}|}(s^{-1}c_{(1)}|s^{-1}c_{(2)})
\end{equation}
for $c\in C$. 
\subsection{Koszul duality}
Let $A\in \Alg(l)$. We recall the standard model structure on $\DGMod(A)$.
\begin{enumerate}
\item The weak equivalences are the quasi-isomorphisms.
\item The fibrations are the surjective maps. 
\item The cofibrations are the maps which have the left lifting
  property with respect to the acyclic fibrations.
\end{enumerate}
It is possible to describe cofibrations more explicitly as retracts of
standard cofibrations but we will not do it. 

Now let $C\in \Cogc(l)$. The following model structure on $\DGComod(C)$ is defined in 
\cite[\S8.2]{Positselski}.
\begin{enumerate}
\item The weak equivalences are the morphisms with a coacyclic cone. 
\item The fibrations are surjective morphisms with kernel which is
  injective when forgetting the differential.
\item The cofibrations are the injective morphisms.
\end{enumerate}
An object is coacyclic if it is in the smallest subcategory of the homotopy category
of $C$ which contains total complexes of short exact sequences and is closed
under arbitrary coproducts. This model structure looks different from
the one defined in \cite[\S2.2.2]{Lefevre}. However both model
structures are Quillen equivalent to the one on $\DGMod(A)$ for $A=\Omega C$, defined above
(see \cite[Thm 2.2.2.2]{Lefevre} and \cite[\S8.4]{Positselski}).  So
they have the same weak equivalences. Since they also have the same cofibrations
they are the same. 

\medskip

We now discuss this Quillen equivalence. 
Let $M\in \DGComod(C^\circ)$ and $N\in \DGMod(A)$. Then $M\otimes_l N$
becomes a left DG-module over $\Hom_{l^e}(\bar{C},\bar{A})$ if we let $\tau\in
\Hom_{l^e}(\bar{C},\bar{A})$ act by
\[
\delta_\tau=(\id \otimes \mu)\circ (\id\otimes \tau\otimes \id)\circ
(\Delta\otimes \id) \in \End(M\otimes_l N)
\]
In particular if $\tau\in \Tw(C,A)$ then $\delta_\tau$ satisfies the
Maurer-Cartan equation in $\End(M\otimes_l N)$.
We let $M\otimes_\tau N$ be equal to $M\otimes_l N$ but with 
$
\delta_\tau
$ added to the differential. 

There exists also an analogue of this construction in case $M\in \DGMod(A^\circ)$
and $N\in \DGComod(C)$. We leave the easy to guess formulas to the reader. 

Here are some useful identities
\begin{align*}
(M\otimes_\tau A)\otimes_A N&=M\otimes_\tau N\\
M\square_C (C\otimes_\tau N)&=M\otimes_\tau N
\end{align*}

There is an analogue of the twisting construction for $\Hom$. Let $M\in \DGComod(C)$
and $N\in \DGMod(A)$. 
Then $\Hom_l(M,N)$ becomes a left DG-module over $\Hom_{l^e}(\bar{C},\bar{A})$ if we let $\tau\in
\Hom_{l^e}(\bar{C},\bar{A})$ act by
\[
\delta_\tau(\phi)=\mu\circ (\tau\otimes\phi)\circ \Delta
\]
If $\tau\in \Tw(C,A)$ then we let $\Hom_\tau(M,N)$ be equal to $\Hom_l(M,N)$
but with $\delta_\tau$ added to the differential. Again this construction may
also be performed with right (co)modules. 

Now we have the following basic identities
\begin{align*}
\Hom_A(A\otimes_\tau M,N)&=\Hom_\tau(M,N)\\
\Hom_C(M,C\otimes_\tau N)&=\Hom_\tau(M,N)
\end{align*}
which yield a pair of adjoint functors \cite[Theorem 2.2.2.2]{Lefevre}
\begin{equation}
\label{ref-A.6-91}
\begin{gathered}
L:\DGComod(C)\r \DGMod(A):M\mapsto A\otimes_\tau M\\
R:\DGMod(A)\r \DGComod(C):N\mapsto C\otimes_\tau N
\end{gathered}
\end{equation}
Below we let $\tau_u$ be the twisting cochain $\bar{C}\r
\overline{\Omega C}$ given by the obvious map. This is the universal
twisting cochain corresponding to the identity map $\Omega C\r \Omega
C$ in \eqref{ref-A.1-84}.  In \cite[\S8.4]{Positselski} it is shown
that in case $A=\Omega C$ and $\tau=\tau_u$ the adjoint pair $(L,R)$
introduced above defines a Quillen equivalence.  In particular a map
$M\r N$ in $\DGComod(C)$ is a weak equivalence if and only if $\Omega
C\otimes_\tau M\r \Omega C\otimes_\tau N$ is a quasi-isomorphism.

The following result is proved in a similar way as Proposition \ref{ref-A.1.2-88}.
\begin{lemmas} \label{ref-A.2.1-92}
Assume that the grading on $C\in \Cogc(l)$ is concentrated in degrees $\ge 0$ and
$M,N\in \DGComod(C)$ are concentrated in degrees $\ge -n$ for certain $n$. Then
a weak equivalence between $M,N$ is the same as a quasi-isomorphism. 
\end{lemmas}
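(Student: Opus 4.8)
The plan is to treat this as the comodule analogue of Proposition~\ref{ref-A.1.2-88} and prove it by the same filtration/spectral-sequence method. Recall that, by the Quillen equivalence \eqref{ref-A.6-91} with $A=\Omega C$ and $\tau=\tau_u$, a morphism $f\colon M\r N$ in $\DGComod(C)$ is \emph{by definition} a weak equivalence precisely when
\[
Lf=\Omega C\otimes_{\tau_u} f\colon \Omega C\otimes_{\tau_u} M\r \Omega C\otimes_{\tau_u} N
\]
is a quasi-isomorphism, so the content of the lemma is that under the stated bounds $Lf$ is a quasi-isomorphism if and only if $f$ is. Since $\Omega C=T_l(\Sigma^{-1}\bar C)$, the underlying graded object of $\Omega C\otimes_{\tau_u} M$ is $T_l(\Sigma^{-1}\bar C)\otimes_l M$, and its differential splits into a part preserving the number of tensor factors coming from $\Sigma^{-1}\bar C$ (the internal differentials of $C$ and of $M$) and a part raising that number by one (the cobar comultiplication \eqref{ref-A.5-90} together with the twisting term $\delta_{\tau_u}$, which turns a coaction $M\r\bar C\otimes M$ into an extra cobar factor).

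First I would pass to the spectral sequence of this filtration by ``word length''. Its $E_0$-differential is the internal one, so, because $l$ is separable and hence $-\otimes_l-$ is exact, the K\"unneth theorem identifies
\[
E_1\cong T_l(\Sigma^{-1}H_\ast(\bar C))\otimes_l H_\ast(M),
\]
and $f$ induces $\Id\otimes H_\ast(f)$ on $E_1$. Thus a quasi-isomorphism $f$ induces an isomorphism on $E_1$, hence on every later page, and a comparison of spectral sequences gives that $Lf$ is a quasi-isomorphism; this is the implication quasi-isomorphism $\Rightarrow$ weak equivalence. The reverse implication is the comodule version of the coalgebra statement that weak equivalences are quasi-isomorphisms: in the bounded situation the unit $M\r RLM$ of the adjunction \eqref{ref-A.6-91} is itself a quasi-isomorphism (a bar--cobar resolution statement provable by the same filtration), and reading the comparison in the other direction then recovers that $f$ is a quasi-isomorphism.

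The hard part will be \emph{convergence} of this spectral sequence, and this is the only place where the hypotheses are genuinely used. The word-length filtration is Hausdorff but need not be degreewise finite, since degree-$0$ classes of $\Sigma^{-1}\bar C$ can be inserted arbitrarily often; I would therefore refine to the coradical (primitive) filtration of $C$, equivalently organise the cobar differential by the number of non-trivial comultiplications, exactly as in the coalgebra case of Proposition~\ref{ref-A.1.2-88}. The role of the bounds is to make this filtration converge: the assumption $C\ge 0$ controls how the coradical degree of a cobar word interacts with its homological degree (via the embedding of the associated graded of the coradical filtration into tensor powers of the primitives), while the assumption $M,N\ge -n$ controls the comodule factor, and together they force only finitely many filtration steps to be nonzero in each total degree. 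Pinning down this degreewise finiteness is the one technical step; once it is in place the spectral sequence converges strongly, the comparison theorem applies in both directions, and the equivalence of weak equivalences with quasi-isomorphisms follows.
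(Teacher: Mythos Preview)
Your approach via the word-length filtration on $\Omega C\otimes_{\tau_u} M$ is exactly what the paper intends (it merely says the result is ``proved in a similar way as Proposition~\ref{ref-A.1.2-88}''). However, you have misjudged where the work lies, and this leads you into two unnecessary complications.

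First, convergence is not hard. With the paper's convention $|sa|=|a|-1$, the functor $\Sigma^{-1}$ \emph{raises} degree by one, so the hypothesis that $C$ is concentrated in degrees $\ge 0$ forces $\Sigma^{-1}\bar C$ to live in degrees $\ge 1$; there are no degree-$0$ classes to ``insert arbitrarily often''. Consequently $(\Sigma^{-1}\bar C)^{\otimes_l p}\otimes_l M$ lives in degrees $\ge p-n$, and in any fixed total degree only finitely many word lengths $p$ contribute. The word-length filtration is therefore already bounded in each degree and the spectral sequence converges strongly; no refinement by the coradical filtration is needed. This is precisely how the hypotheses on $C$ and on $M,N$ are used.

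Second, the implication ``weak equivalence $\Rightarrow$ quasi-isomorphism'' requires no boundedness and does not need your unit-map argument (which, as written, threatens to be circular, since showing $RLf$ is a quasi-isomorphism from $Lf$ being one is again an instance of the statement you are proving). In the model structure quoted from \cite{Positselski}, a weak equivalence has coacyclic cone, and coacyclic objects are always acyclic: they lie in the smallest triangulated subcategory closed under coproducts containing totalizations of short exact sequences, and each of these operations preserves acyclicity. Thus only the direction ``quasi-isomorphism $\Rightarrow$ weak equivalence'' has content, and your $E_1$-comparison argument disposes of it once the (easy) convergence is established.
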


\medskip

\subsection{$A_\infty$-algebras and minimal models}
\label{ref-A.3-93}
By definition a (non-unital)
$l$-$A_\infty$-algebra is an $l$-bimodule~$A$ together with an
$l$-coderivation $d$ of degree one and square zero on the coalgebra
$T_l(\Sigma A)$ compatible with the augmentation. By this we mean
$d(1)=0$, $\epsilon\circ d=0$.
We write $\tilde{B}A=(T_l(\Sigma A),d)$ and call $\tilde{B}A$ the bar
construction of $A$. An $A_\infty$-morphism $A\r A'$ is a DG-coalgebra
morphism $\tilde{B}A\r \tilde{B}A'$.  We write $\Alg_\infty^\bullet(l)$
for the category of $l$-$A_\infty$-algebras.

A coderivation on $T_l(\Sigma A)$ compatible with the augmentation is
determined by ``Taylor coefficients'' ($n\ge 1$)
\[
d_n:(\Sigma A)^{\otimes_l n}\hookrightarrow T_l(\Sigma A)\xrightarrow{d}  T_l(\Sigma A)
\xrightarrow{\text{projection}}
\Sigma \bar{A}
\]
which are of degree one. Introducing suitable signs the $d_n$ may be
transformed into maps
\[
m_n:A^{\otimes_l n}\r A
\]
of degree $2-n$ (see e.g.\ \cite[Lemme
1.2.2.1]{Lefevre}). One has $m_1^2=0$, $m_1$ is a derivation for
$m_2$ and $m_2$ is associative up to a homotopy given by $m_3$. We
view $(A,m_1)$ as a complex and denote its
homology by $H^\ast(A)$. In this way $(H^\ast(A),m_2)$ becomes a graded $l$-algebra
(without unit).

Likewise an $A_\infty$-morphism $f:A\r A'$ is described by maps of degree $1-n$
\[
f_n:A^{\otimes n}\r A'
\]
Here $f_1$ is a morphism of complexes $(A,m_1)\r (A',m'_1)$ 
which is compatible with the multiplications given by $m_2$, $m'_2$ up 
to a homotopy given by $f_2$.  In particular $H^\ast(f_1)$ defines a morphism
of graded $l$-algebras. 

A morphism $f:A\r A'$ in $\Alg_\infty^\bullet(l)$ is said to be a quasi-isomorphism
(or weak equivalence) if $f_1:(A,m_1)\r (A',m'_1)$ is a quasi-isomorphism. 

The
following is a basic result in the theory of $A_\infty$-algebras.
\begin{proposition} \cite[Cor.\ 1.4.14]{Lefevre} \label{ref-A.1-94}
  Let $A\in \Alg_\infty^\bullet(l)$ and let $(H^\ast(A),m_2)$ be its
  cohomology algebra.  Then there exists a structure of an
  $l$-$A_\infty$-algebra on  $H^\ast(A)$ of the form
$(H^\ast(A),m_1=0,m_2,m_3,\ldots)$
  together with a morphism in $\Alg_\infty^\bullet(l)$: $f:H^\ast(A)\r A$
  which lifts the identity $H^\ast(A)\r H^\ast(A)$.
\end{proposition}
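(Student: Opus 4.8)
The plan is to construct the higher products $m_n$ on $H^\ast(A)$ together with the Taylor coefficients $f_n$ of the morphism $f$ by a simultaneous induction on $n$, following Kadeishvili's transfer construction. The input is a contraction of the complex $(A,m_1)$ onto its cohomology. Since $l$ is finite dimensional separable, $l^e=l\otimes_k l^\circ$ is semisimple, so every short exact sequence of $l$-bimodules splits; applying this to $\im m_1\hookrightarrow \ker m_1\hookrightarrow A$ produces $l$-bimodule maps $i\colon H^\ast(A)\to A$ (a choice of cocycle representatives), $p\colon A\to H^\ast(A)$ and a homotopy $h\colon A\to A$ of degree $-1$ with
\[
pi=\id,\qquad \id_A-ip=m_1 h+h m_1,\qquad m_1 i=0,\qquad p\, m_1=0,
\]
and one may further normalise $hi=0$, $ph=0$, $h^2=0$. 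We then set $m_1=0$ on $H^\ast(A)$, let $m_2=p\, m_2^A\,(i\otimes_l i)$ be the induced product, and put $f_1=i$; as $i$ is a quasi-isomorphism this already yields $H^\ast(f_1)=\id$.

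For the inductive step, suppose $m_2,\dots,m_{n-1}$ and $f_1,\dots,f_{n-1}$ have been built so that all $A_\infty$-relations (the structure relations on $H^\ast(A)$ and the morphism relations for $f$) hold in arities $<n$. Reading the arity-$n$ morphism relation in $\Hom_{l^e}((H^\ast A)^{\otimes_l n},A)$ and using $m_1=0$ on $H^\ast(A)$, it takes the form $m_1^A f_n-i\,m_n=U_n$, where $U_n$ is an explicit cochain assembled from the already-constructed $m_j$ and $f_j$ $(j<n)$ together with the structure maps $m_k^A$ of $A$, and involves neither $m_n$ nor $f_n$. Granting that $U_n$ is an $m_1^A$-cocycle, the contraction identity splits it as $U_n=ip\,U_n+m_1^A(hU_n)$, so that
\[
m_n:=-p\,U_n,\qquad f_n:=h\,U_n
\]
(up to the signs forced by the Koszul rule on $T_l(\Sigma-)$) solve the equation. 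By construction $m_n$ has image in $H^\ast(A)$ and $f_n$ is $l$-linear, so the data assemble into a coderivation $d_H$ on $T_l(\Sigma H^\ast A)$ and a DG-coalgebra morphism $F\colon\tilde B(H^\ast A)\to \tilde B A$ lifting the identity.

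The crux is the claim that $U_n$ is closed, i.e.\ $m_1^A U_n=0$. This is the standard ``the obstruction is a cocycle'' computation: one applies $m_1^A$ to $U_n$ and substitutes both the lower-arity coherence identities of $A$ itself (that $m_1^A$ is a derivation for $m_2^A$, and the higher $A_\infty$-relations) and the arity-$<n$ relations already secured for the $m_j$ and the $f_j$; after collecting terms everything cancels, and $d_A^2=0$ is used. All the genuine labour is sign bookkeeping coming from the shift $\Sigma$ and the Koszul convention, the separability of $l$ entering only to guarantee that $i,p,h$ are honest $l$-bimodule maps, so that each constructed map is automatically $l$-linear. The structure relations $d_H^2=0$ then follow globally: since $F\,d_H=d_A F$ and $d_A^2=0$ give $F\,d_H^2=0$, while $F$ is injective because its linear part $F_1=\Sigma i$ is injective and the top tensor-degree component of $F$ on each $(\Sigma H^\ast A)^{\otimes_l n}$ equals $(\Sigma i)^{\otimes n}$. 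An equivalent closed-form alternative is to write $m_n$ and $f_n$ as sums over planar rooted trees, with leaves decorated by $i$, internal edges by $h$, vertices by the $m_k^A$, and the root by $p$ (for $m_n$) respectively $h$ (for $f_n$); the $A_\infty$-relations then drop out formally from $m_1^A i=0=p\, m_1^A$ and $\id_A-ip=m_1^A h+h m_1^A$.
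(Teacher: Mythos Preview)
The paper does not prove this proposition; it is merely quoted from \cite[Cor.~1.4.1.4]{Lefevre} as a known result (the existence of minimal $A_\infty$-models, i.e.\ Kadeishvili's theorem). Your proposal supplies exactly the standard Kadeishvili/homological-perturbation proof of that result, and it is correct: the separability of $l$ is used precisely where you use it (to obtain an $l$-bimodule contraction), the inductive obstruction argument is the usual one, and the injectivity argument for $F$ via the top tensor-degree component is valid. So there is nothing to compare---you have filled in a proof the paper chose to outsource, and your approach is the canonical one.
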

An $A_\infty$-algebra with $m_1=0$ is said to be minimal. 
Following Kontsevich one calls the $A_\infty$-algebra $(H^\ast(A),m_1=0,m_2,m_3,\ldots)$
a \emph{minimal model} for $A$. It is unique up to non-unique isomorphism of $l$-$A_\infty$-algebras. 

\medskip

There is an obvious augmented version of the theory of $A_\infty$-algebras. 
An augmented $l$-$A_\infty$-algebra is an $l$-$A_\infty$-algebra $A$ equipped with
a decomposition of $l$-bimodules $A=l\oplus \bar{A}$ such that $\bar{A}$ is
a sub $l$-$A_\infty$-algebra of $A$ and $1\in l$ is a strict unit. I.e.
$m_1(1)=0$, $m_2(1,a)=a$, $m_2(a,1)=a$ and $m_n(\ldots,1,\ldots)=0$ for $n\ge 3$.
Note that the $A_\infty$-structure on $A$ is completely determined by that of $\bar{A}$. 

Likewise an morphism of augmented $A_\infty$-algebras $f:A\r A'$ is a morphism
of $l$-$A_\infty$-algebras that restricts to a morphism of $l$-$A_\infty$-algebras
$\bar{A}\r\bar{A}'$ such that $f_1(1)=1$ and $f_n(\ldots,1,\ldots)=0$ for $n\ge 2$. 
Again $f$ is completely determined by its restriction to $\bar{A}$. 
We denote the category of augmented $l$-$A_\infty$-algebras by $\Alg_\infty(l)$. 

For $A\in \Alg_\infty(l)$ we put $BA=T_l(\Sigma \bar{A})$ and then the
$A_\infty$-structure on $\bar{A}$ defines a codifferential on $BA$ compatible
with the augmentation. Conversely augmented $A_\infty$-algebras may be defined
in terms of codifferentials on $T_l(\Sigma \bar{A})$ 
which are compatible with the augmentation. 

If $A$ is an augmented $l$-$A_\infty$-algebra then there is a (natural) $l$-$A_\infty$-morphism 
$A\r \Omega BA$ to the DG-algebra $\Omega BA$. This morphism is a quasi-isomorphism
(see e.g.\ \cite[Lemma 2.3.4.3]{Lefevre}).  The DG-algebra $\Omega BA$ is called
the \emph{DG-envelope} of $A$. 
\begin{lemmas}
\label{ref-A.3.1-95}
If $A\r A'$ is an $A_\infty$-quasi-isomorphism then $BA\r BA'$
is a weak equivalence. 
\end{lemmas}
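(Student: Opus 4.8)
The plan is to unwind the definitions and reduce everything to the quasi-isomorphism property of the DG-envelope. By definition an $A_\infty$-morphism $f\colon A\r A'$ \emph{is} a morphism of DG-coalgebras $f\colon BA\r BA'$ (with $BA=T_l(\Sigma\bar A)$), so the map ``$BA\r BA'$'' in the statement is literally $f$. By the definition of weak equivalence in $\Cogc(l)$ recalled in \S\ref{ref-A.1-83}, proving that $f\colon BA\r BA'$ is a weak equivalence amounts to proving that the DG-algebra morphism $\Omega f\colon \Omega BA\r \Omega BA'$ is a quasi-isomorphism. This is the only thing to establish.

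First I would bring in the natural $A_\infty$-quasi-isomorphism $\alpha_A\colon A\r \Omega BA$ into the DG-envelope (\cite[Lemma 2.3.4.3]{Lefevre}); under the bar construction it corresponds to the unit $BA\r B\Omega BA$ of \eqref{ref-A.3-87}, i.e.\ $B\alpha_A$ is \eqref{ref-A.3-87} evaluated at $C=BA$. The key structural input is that $\alpha_{(-)}$ is natural. Applying the naturality of \eqref{ref-A.3-87} in the coalgebra variable to the morphism $f\colon BA\r BA'$ gives $B\alpha_{A'}\circ f=B\Omega f\circ B\alpha_A$; since $A_\infty$-morphisms are by definition coalgebra maps of bar constructions, this is the same as the commuting square of $A_\infty$-morphisms
\[
\alpha_{A'}\circ f=(\Omega f)\circ \alpha_A .
\]
Here I use $f=Bf$ on bar constructions, so $\Omega f=\Omega Bf$ is exactly the DG-algebra map whose quasi-isomorphism we want.

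Finally I would pass to first Taylor coefficients. Since the first Taylor coefficient of a composite of $A_\infty$-morphisms is the composite of the first Taylor coefficients, and since the strict morphism $\Omega f$ equals its own first Taylor coefficient, the displayed identity yields a strictly commutative square of chain maps
\[
(\alpha_{A'})_1\circ f_1=(\Omega f)\circ (\alpha_A)_1 .
\]
Now $f_1$ is a quasi-isomorphism (this is the hypothesis that $f$ is an $A_\infty$-quasi-isomorphism) and $(\alpha_A)_1$, $(\alpha_{A'})_1$ are quasi-isomorphisms (because $\alpha_A,\alpha_{A'}$ are), so by the two-out-of-three property $\Omega f$ is a quasi-isomorphism, hence $f\colon BA\r BA'$ is a weak equivalence. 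The argument is essentially formal; the only point needing care is the naturality of $\alpha_{(-)}$, i.e.\ of the bar--cobar unit \eqref{ref-A.3-87}, which is where I expect the bookkeeping to concentrate.
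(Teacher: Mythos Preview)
Your proof is correct and follows essentially the same approach as the paper: both reduce to showing $\Omega BA\r\Omega BA'$ is a quasi-isomorphism via the commutative square
\[
\xymatrix{
A\ar[r]^{f}\ar[d]_{\alpha_A}&A'\ar[d]^{\alpha_{A'}}\\
\Omega BA\ar[r]_{\Omega f}&\Omega BA'
}
\]
and two-out-of-three. Your version is slightly more explicit in passing to first Taylor coefficients so that the two-out-of-three argument genuinely takes place at the level of chain maps, whereas the paper leaves this implicit.
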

\begin{proof} We have to show that $\Omega BA\r \Omega BA'$ is a quasi-isomorphism. 
This follows from the fact that we have we have a commutative diagram
\begin{equation}
\xymatrix{
  A\ar[d]_{\text{qi}}\ar[r]^{\text{qi}} &A'\ar[d]^{qi}\\
\Omega B A\ar[r]&\Omega BA'
}
\end{equation}
\end{proof}

\begin{lemmas}
\label{ref-A.3.2-96}
Assume that $C\in \Cogc(l)$ is weakly equivalent to $(T_l V,d)$. Then there is
an augmented $l$-$A_\infty$-quasi-isomorphism $ l\oplus \Sigma^{-1} V \r \Omega C$.
\end{lemmas}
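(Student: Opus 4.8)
The plan is to realise $l\oplus\Sigma^{-1}V$ as an $A_\infty$-algebra whose bar construction is the given coalgebra $(T_lV,d)$, and then to obtain a map into $\Omega C$ by passing through the DG-envelope of this $A_\infty$-algebra. The point is that almost all of the needed input has been assembled in \S\ref{ref-A.3-93}; what remains is to fit the pieces together and take care of the direction of the weak equivalence.

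First I would equip $A:=l\oplus\Sigma^{-1}V$ with an $A_\infty$-structure. Recall from \S\ref{ref-A.3-93} that an augmented $l$-$A_\infty$-structure on $A$ is exactly a codifferential on $T_l(\Sigma\bar A)$ compatible with the augmentation. Here $\bar A=\Sigma^{-1}V$, so $\Sigma\bar A=V$ and $T_l(\Sigma\bar A)=T_lV$ as graded augmented $l$-coalgebras; the given differential $d$ is precisely such a codifferential. Hence $A$ carries a canonical augmented $A_\infty$-structure, characterised by the identity $BA=(T_lV,d)$. Note that $(T_lV,d)$ indeed lies in $\Cogc(l)$, its underlying graded coalgebra being cofree.

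Next I would invoke the natural $A_\infty$-quasi-isomorphism $A\to\Omega BA$ into the DG-envelope, recalled in \S\ref{ref-A.3-93}. Under the identification $BA=(T_lV,d)$ this is an $A_\infty$-quasi-isomorphism $l\oplus\Sigma^{-1}V\to\Omega(T_lV,d)$. It remains to connect $\Omega(T_lV,d)=\Omega BA$ with $\Omega C$. Since $(T_lV,d)=BA$ and $C$ are weakly equivalent in $\Cogc(l)$, and since by definition a weak equivalence of cocomplete coalgebras is one whose image under the cobar functor is a quasi-isomorphism, the DG-algebras $\Omega BA$ and $\Omega C$ are connected by quasi-isomorphisms.

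The hard part, and the only step needing more than a direct appeal to \S\ref{ref-A.3-93}, is to turn this last comparison into a single $A_\infty$-morphism landing in $\Omega C$: the weak equivalence $BA\simeq C$ need not be a map in the convenient direction. Here I would use the homotopy-invertibility of $A_\infty$-quasi-isomorphisms (\cite{Lefevre}), which produces a direct $A_\infty$-quasi-isomorphism $\Omega BA\to\Omega C$. Composing $A\to\Omega BA\to\Omega C$ then yields the desired augmented $l$-$A_\infty$-quasi-isomorphism $l\oplus\Sigma^{-1}V\to\Omega C$.
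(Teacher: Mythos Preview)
Your proof is correct and follows essentially the same route as the paper: identify the $A_\infty$-structure on $l\oplus\Sigma^{-1}V$ via $BA=(T_lV,d)$, use the DG-envelope map $A\to\Omega BA$, and then invoke invertibility of $A_\infty$-quasi-isomorphisms to pass from $\Omega BA$ to $\Omega C$. The only cosmetic difference is that the paper first exploits the fibrancy of $(T_lV,d)$ in $\Cogc(l)$ to replace the weak equivalence by an honest coalgebra map $C\to T_lV$, so that only a single DG-arrow $\Omega C\to\Omega T_lV$ needs to be $A_\infty$-inverted, whereas you invert a possible zigzag directly on the algebra side; both are valid.
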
 
\begin{proof} Note that giving the codifferential $d$ on $T_l V$ is
  precisely the same as defining an augmented $l$-$A_\infty$-structure
  on $l+\Sigma^{-1} V$.  As $(T_l V,d)$ is fibrant (see above) the
  weak equivalence $C\r T_lV$ is represented by an actual map of
  augmented $l$-DG-coalgebras. As $(T_l V,d)=B( l+\Sigma^{-1} V)$ we have the following
  quasi-isomorphisms
\[
\Omega C\xrightarrow{DG} \Omega T_l V=\Omega B(l+\Sigma^{-1} V)\xleftarrow{A_\infty} l+\Sigma^{-1} V
\]
The first map is in particular an augmented $l$-$A_\infty$-quasi-isomorphism so it can be inverted 
(e.g.\ \cite[Cor.\ 1.3.1.3]{Lefevre}). This yields what we want. 
\end{proof}

\subsection{The bar cobar formalism in the pseudo-compact case}
\label{ref-A.4-97}
In this paper we use the bar-cobar formalism in the context of
pseudo-compact algebras and modules.  To this end 
we simply dualize everything we have explained above,
using~$\DD$. 
Let $A,C$ be respectively objects in $\PCAlg(l)$ and $\PCCog(l)$.
We put
\begin{equation}
\label{ref-A.8-98}
\begin{aligned}
BA&=\DD \Omega \DD A\\
\Omega C&=\DD B\DD C
\end{aligned}
\end{equation}
We may interpret these definitions more concretely. For $V\in \PCGr(l)$ put
\[
T_l V=\prod_{n\ge 0} V^{\otimes_l n}
\]
One checks that $T_lV$ is naturally a graded augmented pseudo-compact $l$-algebra and
coalgebra. Then $BA=T_l(\Sigma \bar{A})$, $\Omega
C=T_l(\Sigma^{-1}\bar{C})$ with the differentials given by the
formulas \eqref{ref-A.4-89}\eqref{ref-A.5-90}.

We equip $\PCAlgc(l)$ with the dual model structure on $\Cogc(l)$.
In particular 
morphism $p:A\r A'$ in $\PCAlgc(l)$ is a
a weak equivalence if $Bp:BA \r BA'$ is a quasi-isomorphism.
An object is cofibrant if it is of the form $(T_l V,d)$ with
$V\in \PC(l^e)$ and $d$ compatible with the augmentation. 

By similar dualizing we say that a weak equivalence between objects in $\PCCog(C)$ is 
the same as a quasi-isomorphism.

 We equip the categories $\PCDGComod(C)$ and
$\PCDGMod(A)$ with the duals of the model structures on $\DGMod(\DD
C^\circ)$ and $\DGComod(\DD A^\circ)$.

We dualize the functors $L,R$ in the
obvious way: $R=\DD L\DD$, $L=\DD R \DD$. They are given by the same formulas
as \eqref{ref-A.6-91} but now we use them with $C=BA$ and
the universal (continuous) twisting cochain
$\tau_u:\overline{BA}\r \bar{A}$.

A weak equivalence between objects in $\PCDGComod(C)$ is the same as a
quasi-isomorphism. On the other hand a morphism $M\r N$ is
$\PCDGMod(A)$ is a weak equivalence if and only if $BA\otimes_{\tau_u} M\r
BA\otimes_{\tau_u} N$ is a quasi-isomorphism.  The derived categories of
$A$ and $C$ are obtained from $\PCDGMod(A)$ and $\PCDGComod(C)$ by
inverting weak equivalences.
\subsection{Minimal models for pseudo-compact algebras}
\label{ref-A.5-99}
If $d$ is a differential on $T_l W$ with $W\in \PC(l^e)$ then we
will denote its components $W\r W^{\otimes_l n}$ by $d_n$.

We first note that since $\DD T_l W\cong T_l (\DD W)$, specifying a
differential on $T_l W$ is exactly the same as specifying an augmented
$l$-$A_\infty$-structure on $l+\Sigma^{-1} \DD W$ (and this is an
honest $A_\infty$-structure, not a pseudo-compact one).

For $A\in \PCAlgc(l)$ we define the Koszul dual of $A$ as (see also \cite{Keller12})
\begin{equation}
\label{ref-A.9-100}
A^!=\Omega \DD A
\end{equation}
Thus $A^!$ is an honest augmented $l$-DG-algebra (not a pseudo-compact DG-algebra). 
\begin{propositions} \label{ref-A.5.1-101} (Koszul duality, cfr \cite{Keller12}) There
is an equivalence of triangulated categories
\[
D(A)\r D((A^!)^\circ)^\circ
\]
which sends $\Sigma^n l$ to $\Sigma^{-n}A^!$. 
\end{propositions}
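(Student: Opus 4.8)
The plan is to assemble the pseudo-compact bar--cobar Quillen equivalence of \S\ref{ref-A.4-97} with the dualizing functor $\DD$, and then to identify the image of the simple module $l$. Write $C=BA\in\PCCog(l)$. By \eqref{ref-A.8-98} and \eqref{ref-A.9-100}, together with the fact that $\DD$ is an inverse duality ($\DD\DD=\id$), we have $\DD(BA)=\DD\DD\Omega\DD A=\Omega\DD A=A^!$. First I would invoke the pseudo-compact functors $(L,R)$ from \S\ref{ref-A.4-97}, used with $C=BA$ and the universal continuous twisting cochain $\tau_u$: these form a Quillen equivalence between $\PCDGComod(BA)$ and $\PCDGMod(A)$, so on homotopy categories they induce a triangulated equivalence $D(A)\simeq D(\PCDGComod(BA))$.

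Next I would apply $\DD$. By construction in \S\ref{ref-A.4-97} the model structure on $\PCDGComod(BA)$ is the $\DD$-dual of the one on $\DGMod(\DD(BA)^\circ)=\DGMod((A^!)^\circ)$; since $\DD$ is an exact duality it carries weak equivalences to weak equivalences and hence descends to a contravariant triangulated equivalence $D(\PCDGComod(BA))\simeq D((A^!)^\circ)$, i.e.\ to a covariant triangulated equivalence $D(\PCDGComod(BA))\simeq D((A^!)^\circ)^\circ$. Composing the two steps yields the desired equivalence $D(A)\simeq D((A^!)^\circ)^\circ$.

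It remains to track the object $l$. Using the formula for $\delta_{\tau_u}$ recalled in \S\ref{ref-A.4-97}, one computes $R(l)=BA\otimes_{\tau_u}l=BA$, because the twisting term vanishes on the trivial module: the $A$-action on $l$ factors through $\epsilon$ and hence kills $\bar{A}$, while $\tau_u$ lands in $\bar{A}$. Thus under the first equivalence $l$ corresponds to the cofree comodule $BA$, and under $\DD$ this maps to $\DD(BA)=A^!$, the free rank-one right $A^!$-module. Finally, since $\DD$ reverses the shift, $\DD\Sigma=\Sigma^{-1}\DD$, the object $\Sigma^n l$ is sent first to $\Sigma^n BA$ and then to $\Sigma^{-n}A^!$, as claimed.

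The substantive input---the Positselski--Lef\`evre Quillen equivalence---is imported wholesale from \S\ref{ref-A.4-97} (and ultimately from \cite{Positselski,Lefevre}), so the genuine obstacle is purely organisational: one must keep the left/right-module and opposite-algebra conventions consistent through the two dualizations, so that the composite comes out covariant and triangulated, and one must check that the grading is reversed exactly once so that $\Sigma^n l$ lands on $\Sigma^{-n}A^!$ rather than on $\Sigma^{n}A^!$.
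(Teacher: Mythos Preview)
Your proof is correct and follows essentially the same route as the paper: first pass from $D(A)$ to $D(BA)$ via the pseudo-compact Quillen equivalence $R:M\mapsto BA\otimes_{\tau_u}M$, then apply $\DD$ to identify $D(BA)$ with $D((A^!)^\circ)^\circ$, and track $l\mapsto BA\mapsto A^!$. Your explicit verification that the twisting term $\delta_{\tau_u}$ vanishes on $l$ (because $\tau_u$ lands in $\bar A$ which kills $l$) is a welcome detail that the paper leaves implicit.
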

\begin{proof} We have
\[
D(A)\cong D(BA)=D(\DD \Omega \DD A)\cong D((\Omega \DD A)^\circ)^\circ
\]
The functor realizing the indicated equivalence is given by
\[
M\mapsto BA\otimes_{\tau_u} M\cong \DD \Omega \DD A\otimes_{\tau_u} M\cong \DD(\DD M\otimes_{\tau_u} A^!)
\mapsto \DD M\otimes_{\tau_u} A^!
\]
We see that $l$ is indeed sent to $A^!$. 
\end{proof}
\begin{corollarys} \label{ref-A.5.2-102}
We have as algebras
\[
\Ext_A^\ast(l,l)\cong H^\ast(A^!)^\circ
\]
\end{corollarys}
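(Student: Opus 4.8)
The plan is to read this off directly from Proposition \ref{ref-A.5.1-101}: the corollary is just that equivalence expressed on the graded endomorphism algebras of the distinguished objects $l$ and $A^!$. First I would recall that $\Ext_A^\ast(l,l)=\bigoplus_n \Hom_{D(A)}(l,\Sigma^n l)$ is a graded $k$-algebra under the Yoneda product, that is, the graded endomorphism algebra of $l$ in the triangulated category $D(A)$. Since any equivalence of triangulated categories induces an isomorphism of such graded endomorphism algebras, applying the equivalence $F:D(A)\r D((A^!)^\circ)^\circ$ of Proposition \ref{ref-A.5.1-101} produces an isomorphism of graded algebras
\[
\Ext_A^\ast(l,l)\cong \bigoplus_n \Hom_{D((A^!)^\circ)^\circ}\left(F(l),F(\Sigma^n l)\right).
\]

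Next I would substitute the values $F(l)=A^!$ and $F(\Sigma^n l)=\Sigma^{-n}A^!$ furnished by Proposition \ref{ref-A.5.1-101} and unwind the opposite category. Using $\Hom_{\Cscr^\circ}(X,Y)=\Hom_{\Cscr}(Y,X)$ together with invertibility of the shift one finds
\[
\Hom_{D((A^!)^\circ)^\circ}\left(A^!,\Sigma^{-n}A^!\right)=\Hom_{D((A^!)^\circ)}\left(\Sigma^{-n}A^!,A^!\right)\cong \Hom_{D((A^!)^\circ)}\left(A^!,\Sigma^{n}A^!\right),
\]
so the cohomological degree $n$ is preserved rather than negated. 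As $A^!$ is the free rank-one module over $(A^!)^\circ$, no resolution is needed and a module endomorphism is left multiplication by its value at $1$; hence $\bigoplus_n \Hom_{D((A^!)^\circ)}(A^!,\Sigma^{n}A^!)\cong H^\ast(A^!)$ as graded algebras.

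The one point that requires genuine care is the product, and it is precisely what yields the opposite $H^\ast(A^!)^\circ$ in place of $H^\ast(A^!)$: composition in $D((A^!)^\circ)^\circ$ is by definition the opposite of composition in $D((A^!)^\circ)$, so the product transported onto $\Ext_A^\ast(l,l)$ is the opposite of the one on $H^\ast(A^!)$. Concatenating the three displayed identifications and tracking this single reversal gives $\Ext_A^\ast(l,l)\cong H^\ast(A^!)^\circ$ as graded algebras, which is the claim.
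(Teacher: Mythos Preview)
Your argument is correct and is essentially the paper's own proof: both compute $\Ext_A^n(l,l)=\Hom_{D(A)}(l,\Sigma^n l)$, push it through the equivalence of Proposition~\ref{ref-A.5.1-101} to $\Hom_{D((A^!)^\circ)}(\Sigma^{-n}A^!,A^!)\cong H^n(A^!)$, and then observe that the passage through the opposite category reverses composition. The only difference is that you spell out the reason for the opposite multiplication, whereas the paper leaves this as ``one verifies''.
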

\begin{proof}
We have
\begin{align*}
\Ext_A^n(l,l)&=\Hom_{D(A)}(l,\Sigma^n l)\\
&=\Hom_{D(A^{!\circ})^\circ}(A^!,\Sigma^{-n}A^!)\\
&=\Hom_{D(A^{!\circ})}(\Sigma^{-n} A^!,A^!)\\
&=A^!_{n}
\end{align*}
One verifies that this identification inverts the order of the multiplication,
whence the result. 
\end{proof}
\begin{remarks}
One may show that $A^!$ actually computes $\RHom_A(l,l)^\circ$. 
\end{remarks}
\begin{propositions}
\label{ref-A.5.4-103}
Let $A\in \PCAlgc(l)$. Then $A$ there is a weak equivalence $\Omega \DD A^!\r A$. Furthermore
the same holds with $A^!$ replaced by any augmented $l$-$A_\infty$-algebra quasi-isomorphic
to it. Conversely if $A$ is weakly equivalent to $(T_l W,d)$ then  
there is an $A_\infty$-quasi-isomorphism $l+\Sigma^{-1} \DD W\cong A^!$, where
the $A_\infty$-algebra structure on $l+\Sigma^{-1}\DD W$ is as introduced above. 
\end{propositions}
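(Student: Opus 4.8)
The plan is to reduce every assertion to the standard (non-pseudo-compact) bar--cobar statements of \S\ref{ref-A.1-83}--\S\ref{ref-A.3-93} by systematically dualizing with $\DD$. First I would unwind the definitions: since $A^!=\Omega\DD A$ by \eqref{ref-A.9-100} and $BA=\DD\Omega\DD A$ by \eqref{ref-A.8-98}, we have $\DD A^!=BA$, and applying the pseudo-compact cobar $\Omega(C')=\DD B\DD C'$ to $C'=\DD A^!=BA\in\PCCog(l)$ gives $\Omega\DD A^!=\DD BA^!$. Now the honest unit map $\DD A\to B\Omega\DD A=BA^!$ of \eqref{ref-A.3-87} is a weak equivalence in $\Cogc(l)$, because $\DD A\in\Cogc(l)$. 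Since the model structure on $\PCAlgc(l)$ is by definition the $\DD$-dual of that on $\Cogc(l)$, the contravariant functor $\DD$ transports this to the weak equivalence $\DD BA^!=\Omega\DD A^!\to\DD\DD A=A$. This is the asserted weak equivalence.

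For the ``furthermore'' clause, suppose $A'$ is an augmented $l$-$A_\infty$-algebra $A_\infty$-quasi-isomorphic to $A^!$. Using that $A_\infty$-quasi-isomorphisms are invertible (\cite[Cor.\ 1.3.1.3]{Lefevre}, already invoked in Lemma~\ref{ref-A.3.2-96}) I would choose an $A_\infty$-quasi-isomorphism $A^!\to A'$. By Lemma~\ref{ref-A.3.1-95} the induced map $BA^!\to BA'$ is a weak equivalence, and interpreting $\Omega\DD A'$ as $\DD BA'$ (consistent with the first paragraph, since $BA'=T_l(\Sigma\bar{A'})$ and $\DD\Sigma=\Sigma^{-1}\DD$), applying $\DD$ yields a weak equivalence $\Omega\DD A'=\DD BA'\to\DD BA^!=\Omega\DD A^!$. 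Composing with the weak equivalence $\Omega\DD A^!\to A$ of the first paragraph produces the desired weak equivalence $\Omega\DD A'\to A$.

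For the converse, assume $A$ is weakly equivalent to $(T_lW,d)\in\PCAlgc(l)$. The key identification is that, under $\DD T_lW=T_l\DD W$, the dual $\DD(T_lW,d)$ is exactly the bar construction $B(l\oplus\Sigma^{-1}\DD W)$, where the augmented $l$-$A_\infty$-structure on $l\oplus\Sigma^{-1}\DD W$ is the one determined by $d$ (this is the correspondence recorded at the start of \S\ref{ref-A.5-99}, together with $\Sigma\,\Sigma^{-1}\DD W=\DD W$). Since $\DD$ sends the weak equivalence $A\simeq(T_lW,d)$ to a weak equivalence $\DD A\simeq\DD(T_lW,d)=B(l\oplus\Sigma^{-1}\DD W)$ in $\Cogc(l)$, and $\Omega$ preserves weak equivalences by Theorem~\ref{ref-A.1.1-85}, I obtain a weak equivalence $A^!=\Omega\DD A\simeq\Omega B(l\oplus\Sigma^{-1}\DD W)$, which is in particular a quasi-isomorphism of DG-algebras. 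Finally I would compose with the canonical $A_\infty$-quasi-isomorphism $l\oplus\Sigma^{-1}\DD W\to\Omega B(l\oplus\Sigma^{-1}\DD W)$ into the DG-envelope (recalled in \S\ref{ref-A.3-93}), giving an $A_\infty$-quasi-isomorphism $l\oplus\Sigma^{-1}\DD W\cong A^!$.

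The substance of this argument is entirely formal once the dictionary is set up, so the main obstacle is bookkeeping rather than mathematics. Concretely, the delicate points are (i) keeping straight which objects are pseudo-compact and which are honest, so that each occurrence of $\Omega$, $B$, and $\DD$ is applied in the correct category and in the correct variance, and (ii) verifying that the shift conventions make $\DD(T_lW,d)$ \emph{literally} equal to $B(l\oplus\Sigma^{-1}\DD W)$ with the intended $A_\infty$-structure, which is precisely where the correspondence between differentials on $T_lW$ and $A_\infty$-structures on $l\oplus\Sigma^{-1}\DD W$ must be invoked with care.
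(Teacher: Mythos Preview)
Your argument is correct and follows the same route as the paper: unwind $\Omega\DD A^!$ to $\Omega BA$ (equivalently $\DD BA^!$), dualize the unit \eqref{ref-A.3-87} for the first assertion, use Lemma~\ref{ref-A.3.1-95} and $\Omega\DD(-)=\DD B(-)$ for the ``furthermore'' clause, and for the converse identify $\DD(T_lW,d)$ with $B(l\oplus\Sigma^{-1}\DD W)$ and pass through the DG-envelope. The only difference is cosmetic: for the converse the paper simply invokes Lemma~\ref{ref-A.3.2-96} with $C=\DD A$, $V=\DD W$, whereas you effectively reprove that lemma inline (using Theorem~\ref{ref-A.1.1-85} in place of the fibrancy argument), which is fine.
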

\begin{proof}
We have
\[
\Omega \DD A^!\cong \Omega \DD \Omega \DD A=\Omega B A
\]
and $\Omega B A$ is weakly equivalent to $A$ by applying $\DD$ to
\eqref{ref-A.3-87}.  This implies that $A$ is weakly equivalent to $\Omega \DD A^!$. 
The fact that $A^!$ may be replaced by any other algebra
quasi-isomorphic to it follows from the fact that $\Omega \DD A^!=\DD
BA^!$ combined with Lemma \ref{ref-A.3.1-95}.

Finally by applying $\DD$ to the conclusion of Lemma \ref{ref-A.3.2-96} with
$C=\DD A$ and $V=\DD W$ we obtain $A^!\cong l+\Sigma^{-1} \DD W$.
\end{proof}
\begin{corollarys}
\label{ref-A.5.5-104}
Let $A\in \PCAlgc(l)$. 
Then there exists a weak equivalence
$(T_l W,d)\r A$ such that $d_1=0$. 
\end{corollarys}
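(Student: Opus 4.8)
The plan is to read off the desired presentation from the Koszul dual $A^!=\Omega\DD A$ together with its minimal model. Since $A^!$ is an augmented $l$-DG-algebra it is in particular an object of $\Alg_\infty(l)$, so by the augmented version of Proposition \ref{ref-A.1-94} it admits a minimal model: a minimal augmented $l$-$A_\infty$-algebra $M$ (i.e.\ with $m_1=0$) together with an $A_\infty$-quasi-isomorphism $M\r A^!$. The whole point is that minimality of $M$ is exactly what will produce the condition $d_1=0$ on the tensor-algebra side.

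First I would record $M$ in tensor-algebra form. Setting $W=\Sigma^{-1}\DD\bar{M}\in\PC(l^e)$, the remarks opening \S\ref{ref-A.5-99} identify the augmented $l$-$A_\infty$-structure on $M=l+\Sigma^{-1}\DD W$ with a differential $d$ on $T_lW$, whose components $d_n\colon W\r W^{\otimes_l n}$ are dual to the structure maps $m_n$ of $M$. Because $\DD T_lV\cong T_l(\DD V)$ and $BM=T_l(\Sigma\bar{M})$, this is precisely the statement that $(T_lW,d)=\DD BM=\Omega\DD M$ (one checks $\Sigma^{-1}\DD W=\bar M$, so that $l+\Sigma^{-1}\DD W$ really is $M$).

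Next I would invoke Proposition \ref{ref-A.5.4-103} with $A^!$ replaced by the quasi-isomorphic $A_\infty$-algebra $M$ (the proposition explicitly allows this, via Lemma \ref{ref-A.3.1-95}): it yields a weak equivalence $\Omega\DD M\r A$, i.e.\ a weak equivalence $(T_lW,d)\r A$. It then only remains to check $d_1=0$. Since $d_1\colon W\r W$ is dual to $m_1$ under the dictionary above, and $M$ is minimal, we get $d_1=0$, which completes the argument.

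The only real content beyond citing the two propositions is this last bookkeeping: tracking the shifts and the duality $\DD$ carefully enough to be sure that the length-one Taylor coefficient of the cobar differential on $\Omega\DD M$ is exactly the dual of $m_1$, so that minimality of $M$ translates into $d_1=0$. This is routine given the explicit formulas \eqref{ref-A.4-89}, \eqref{ref-A.5-90} and the identifications in \S\ref{ref-A.5-99}, but it is the step where the sign and shift conventions must be handled with care. Everything else is a direct application of the established bar--cobar and minimal-model machinery.
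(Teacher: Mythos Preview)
Your argument is correct and follows exactly the paper's approach: take a minimal augmented $A_\infty$-model $M$ for $A^!$, then apply Proposition~\ref{ref-A.5.4-103} (with $A^!$ replaced by $M$) to obtain the weak equivalence $(T_lW,d)=\Omega\DD M\r A$, and read off $d_1=0$ from $m_1=0$. Your version is in fact more careful with the shifts and the duality $\DD$ than the paper's two-line proof (which writes the minimal model as $l+\Sigma^{-1}W$ rather than $l+\Sigma^{-1}\DD W$), but the strategy is identical.
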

\begin{proof}
We let  $l+\Sigma^{-1} W$ be a minimal augmented
$l$-$A_\infty$-model for $A^!$.  Then from Proposition \ref{ref-A.5.4-103}
 obtain that $A$ is weakly equivalent
to $(T_l W,d)$ where $d_1=0$.
\end{proof}
Following traditional terminology we call a weak equivalence as in 
Corollary \ref{ref-A.5.5-104} a \emph{minimal model} 
for $A$. 
\begin{corollarys}
\label{ref-A.5.6-105}
Whenever we have a minimal model $T_l W\r A$ then $W\cong \Sigma^{-1}(\DD\Ext^\ast_A(l,l))_{\le 0}$
and the $m_2$ multiplication on $l+\Sigma^{-1} \DD W\cong \Ext^\ast_A(l,l)$  for the induced
$A_\infty$-structure corresponds
to the opposite of the Yoneda multiplication on $\Ext^\ast_A(l,l)$. 
\end{corollarys}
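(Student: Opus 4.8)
The plan is to read off both assertions from identifying $l+\Sigma^{-1}\DD W$ with the canonical minimal $A_\infty$-model of $A^!$. First I would record that, since $T_lW\to A$ is a minimal model, its differential satisfies $d_1=0$; hence the augmented $l$-$A_\infty$-structure that $d$ induces on $l+\Sigma^{-1}\DD W$ (via the identification $\DD T_lW\cong T_l(\DD W)$ recalled at the start of \S\ref{ref-A.5-99}) is itself minimal, i.e.\ $m_1=0$. By the converse half of Proposition \ref{ref-A.5.4-103} there is an augmented $l$-$A_\infty$-quasi-isomorphism $l+\Sigma^{-1}\DD W\cong A^!$.

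Next I would invoke uniqueness of minimal models. Proposition \ref{ref-A.1-94} produces the canonical minimal model $(H^\ast(A^!),m_1=0,m_2,\ldots)$ together with a quasi-isomorphism to $A^!$; composing it with a homotopy inverse of the quasi-isomorphism from the previous step (legitimate since $A_\infty$-quasi-isomorphisms are invertible, \cite[Cor.\ 1.3.1.3]{Lefevre}) yields an $A_\infty$-quasi-isomorphism $f:l+\Sigma^{-1}\DD W\to H^\ast(A^!)$ between two \emph{minimal} $A_\infty$-algebras. Because both sides have $m_1=0$, the linear term $f_1$ is a quasi-isomorphism of complexes with zero differential, hence an isomorphism of graded $l$-bimodules, and the level-two morphism relation collapses (the correction $m_1f_2\pm f_2m_1$ vanishes) to $f_1\circ m_2=m_2\circ(f_1\otimes f_1)$. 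Thus $f_1$ is an isomorphism of graded $l$-algebras $(l+\Sigma^{-1}\DD W,m_2)\cong (H^\ast(A^!),m_2)$.

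From here both claims follow formally. Passing to augmentation ideals yields $\Sigma^{-1}\DD W\cong \overline{H^\ast(A^!)}$ as graded $l$-bimodules; dualizing and using $\DD\Sigma=\Sigma^{-1}\DD$ gives $W\cong \Sigma^{-1}\DD\,\overline{H^\ast(A^!)}$. Since Corollary \ref{ref-A.5.2-102} identifies $\Ext^\ast_A(l,l)$ with $H^\ast(A^!)^\circ$ without disturbing the underlying graded $l$-bimodule, $\overline{H^\ast(A^!)}$ is the reduced $\overline{\Ext^\ast_A(l,l)}$, so $W\cong\Sigma^{-1}\DD\,\overline{\Ext^\ast_A(l,l)}$; this is the asserted identification, the restriction to degrees $\le 0$ amounting to discarding the copy of $l$ carried by the augmentation. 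For the multiplicative statement, the graded algebra $(H^\ast(A^!),m_2)$ is by the very isomorphism of Corollary \ref{ref-A.5.2-102} precisely the \emph{opposite} of the Yoneda algebra $\Ext^\ast_A(l,l)$; transporting along the algebra isomorphism $f_1$ shows that the induced $m_2$ on $l+\Sigma^{-1}\DD W$ corresponds to the opposite Yoneda product.

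The main obstacle is the uniqueness step: one must ensure that a minimal $A_\infty$-algebra quasi-isomorphic to $A^!$ is genuinely \emph{isomorphic} to the canonical minimal model, which rests on invertibility of $A_\infty$-quasi-isomorphisms and on the collapse of the morphism relations in the minimal case that makes $f_1$ a strict $m_2$-algebra map. The remaining work is routine bookkeeping of the $\DD$ and $\Sigma$ conventions and of the ``opposite'' appearing in Corollary \ref{ref-A.5.2-102}.
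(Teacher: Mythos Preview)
Your proposal is correct and follows essentially the same route as the paper: invoke Proposition \ref{ref-A.5.4-103} to obtain an $A_\infty$-quasi-isomorphism $l+\Sigma^{-1}\DD W\cong A^!$, then pass to cohomology and apply Corollary \ref{ref-A.5.2-102}. The paper's version is slightly more direct: rather than inverting the quasi-isomorphism and composing with the canonical minimal model of $A^!$, it simply observes that since $m_1=0$ on $l+\Sigma^{-1}\DD W$ one has $l+\Sigma^{-1}\DD W = H^\ast(l+\Sigma^{-1}\DD W)\cong H^\ast(A^!)$ as graded algebras (any $A_\infty$-quasi-isomorphism induces an algebra isomorphism on $H^\ast$), so the detour through invertibility of $A_\infty$-quasi-isomorphisms is unnecessary.
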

\begin{proof}  By Proposition \ref{ref-A.5.4-103} we have an $A_\infty$-quasi-isomorphism
$l+\Sigma^{-1} \DD W\cong A^!$ and hence an isomorphism as algebras
\[
l+\Sigma^{-1} \DD W\cong H^\ast(l+\Sigma^{-1}\DD W)\cong H^\ast(A^!)
\]
It now suffices to apply Corollary \ref{ref-A.5.2-102}. 
\end{proof}
\section{Hochschild homology of pseudo-compact algebras}
\label{ref-B-106}
Let $A\in \PCAlgc(l)$. It is easy to see that the tensor product $-\otimes_A-$  satisfies 
the hypotheses of  \cite[Prop.\ 4.1]{quillen1} in both arguments and hence it 
may be left derived in both arguments. It is also easy to see that  deriving
the first argument gives the same result as deriving the second argument. Therefore
we make no distinction between the two and write the result as $-\Lotimes_A-$. 

Now we work over $A^e$ which is considered as an object in $\Mod(l^e)$. 
Our aim is to show the following result
\begin{proposition}
\label{ref-B.1-107} If $A\in \PCAlgc(l)$ then
  $A\Lotimes_{A^e} A$ is computed by the standard Hochschild
  complex $(\C(A),b)= ((A\otimes_l T_l(\Sigma A))_l,d_A+d_{\text{Hoch}})$
  where $d_{\text{Hoch}}$ is
  the usual Hochschild differential.
\end{proposition}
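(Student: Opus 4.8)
The plan is to produce an explicit cofibrant resolution of $A$ in $\PCDGMod(A^e)$ on which the functor $A\otimes_{A^e}(-)=(-)_\natural$ visibly returns the Hochschild complex. The natural candidate is the unnormalized two-sided bar resolution $\mathrm{Bar}(A)=A\otimes_l T_l(\Sigma A)\otimes_l A$, formed as a product total complex and carrying the bar differential $b'$ together with the internal differential $d_A$, augmented to $A$ by multiplication $\mu$. Since $A\otimes_{A^e}(A\otimes_l Y\otimes_l A)=A\otimes_{l^e}Y=(A\otimes_l Y)_l$ for an $l$-bimodule $Y$, applying $(-)_\natural$ degreewise to $\mathrm{Bar}(A)$ returns precisely $((A\otimes_l T_l(\Sigma A))_l,\,d_A+d_{\text{Hoch}})=\C(A)$, the $b'$-differential descending to the cyclic Hochschild differential $b$. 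Everything therefore reduces to showing that $\mu\colon\mathrm{Bar}(A)\to A$ is a cofibrant resolution for the model structure of \S\ref{ref-6-8}.

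Cofibrancy is routine. As $l$ is separable (\S\ref{ref-5-3}), $l^e$ is semisimple, so every object of $\PC(l^e)$ is projective; combined with the fact that products of projectives are projective in $\PC(k)$ (\S\ref{ref-4-0}), the bimodule $T_l(\Sigma A)=\prod_n(\Sigma A)^{\otimes_l n}$ is projective over $l^e$. Forgetting the differential, $\mathrm{Bar}(A)\cong A^e\otimes_{l^e}T_l(\Sigma A)$ is then projective over $A^e$, so $0\to\mathrm{Bar}(A)$ is a cofibration and $\mathrm{Bar}(A)$ is cofibrant.

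The main obstacle is to prove that $\mu$ is a \emph{weak equivalence} of $A^e$-modules, i.e.\ that $\cone(\mu)$ is acyclic in the strong sense of \S\ref{ref-6-8} and not merely quasi-isomorphic to zero. The classical extra-degeneracy homotopy (insertion of a unit at one end) is continuous, commutes with $d_A$, and is right $A$-linear, so it exhibits $\cone(\mu)$ as contractible as a complex of one-sided modules; the difficulty is precisely that this homotopy is \emph{not} $A^e$-linear, so one-sided contractibility must be upgraded to $A^e$-acyclicity. I would do this through the twisting-cochain detection of weak equivalences of \S\ref{ref-A.4-97}: a map of $A^e$-modules is a weak equivalence exactly when it becomes a quasi-isomorphism after $B(A^e)\otimes_{\tau_u}(-)$. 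Writing $\mathrm{Bar}(A)$ as the two-sided twisted complex $A\otimes_{\tau_u}BA\otimes_{\tau_u}A$ and comparing with the standard cofibrant replacement furnished by the Koszul-duality Quillen equivalence (Theorem \ref{ref-A.1.1-85}) applied to $A^e$, the augmentation becomes the counit $LR(A)\to A$, whose acyclicity is built into that equivalence. As a sanity check, for cofibrant $A=(T_lV,d)$ the statement already follows from the pseudo-compact form of Proposition \ref{ref-7.2.1-15} together with the length-two resolution \eqref{ref-7.4-16}, so the genuine content is the passage to arbitrary $A\in\PCAlgc(l)$. The delicate point throughout is the pseudo-compact bookkeeping: one must verify that replacing sums by product total complexes does not disturb the convergence of the contracting homotopies, and that the one-sided formalism of \S\ref{ref-A.4-97} transfers to $A^e$ and its Koszul dual $B(A^e)$ (via $B(A^e)\simeq BA\otimes BA$ with the appropriate signs). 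Once $\mu$ is known to be a cofibrant resolution, $A\Lotimes_{A^e}A=A\otimes_{A^e}\mathrm{Bar}(A)=\C(A)$, as claimed.
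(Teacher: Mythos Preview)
Your outline coincides with the paper's: use the two-sided bar object as a cofibrant $A^e$-resolution and then apply $(-)_\natural$. You also correctly isolate the real difficulty, namely that the contracting homotopy is only one-sided so $\mu$ must be promoted from a quasi-isomorphism to a weak equivalence in $\PCDGMod(A^e)$. Two points, one minor and one substantive.

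First, the identification $\mathrm{Bar}(A)=A\otimes_{\tau_u}BA\otimes_{\tau_u}A$ is not literal: the twisted object involves $T_l(\Sigma\bar A)$, not $T_l(\Sigma A)$, so it is the \emph{normalized} bar resolution. The paper works with this normalized version, obtains the reduced Hochschild complex $((A\otimes_l T_l(\Sigma\bar A))^l,b)$, and only then invokes the standard comparison with the unnormalized complex. Your unnormalized $\mathrm{Bar}(A)$ is not of the form $A^e\otimes_{\tau}(-)$ for any comodule, so the twisting-cochain machinery does not apply to it directly.

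Second, and more seriously, your key step ``the augmentation becomes the counit $LR(A)\to A$'' does not do what you need. For the \emph{one-sided} Quillen pair $(L,R)$ one indeed has $LR(A)=A\otimes_{\tau_u}BA\otimes_{\tau_u}A$ and $\mu$ is the counit, but this only certifies $\mu$ as a weak equivalence of left $A$-modules, which is exactly the one-sided contractibility you already had. For the \emph{two-sided} pair $(L^e,R^e)$ of \eqref{ref-B.2-110} the counit is $L^eR^e(A)=A\otimes_{\tau_u}BA\otimes_{\tau_u}A\otimes_{\tau_u}BA\otimes_{\tau_u}A\to A$, not $\mu$. The paper's manoeuvre is different and sharper: rather than trying to realise $\mu$ as a two-sided counit, it passes to the \emph{adjoint} map
\[
\Delta_{13}:BA\longrightarrow BA\otimes_{\tau_u}A\otimes_{\tau_u}BA=R^e(A)
\]
in $\PCDGComod((BA)^e)$. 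In this comodule category weak equivalences are simply quasi-isomorphisms, so it suffices to check $\Delta_{13}$ on underlying complexes. Viewed merely as a map of \emph{left} $BA$-comodules, $\Delta_{13}$ is the unit $BA\to RL(BA)$ of the one-sided equivalence, hence a quasi-isomorphism; but a quasi-isomorphism of complexes is a quasi-isomorphism regardless of the bimodule/bicomodule structure, so $\Delta_{13}$ is a weak equivalence of $(BA)^e$-comodules, and by adjunction $\mu_{13}$ is a weak equivalence of $A^e$-modules. This passage to the comodule side, where ``weak equivalence $=$ quasi-isomorphism'' lets a one-sided argument settle the two-sided question, is the missing idea in your sketch.
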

\begin{proof}
In lemma \ref{ref-B.2-109} below we show that $A\otimes_{\tau_u} BA\otimes_{\tau_u} A$ is a cofibrant replacement for $A$  in $\PCDGMod(A^e)$.

Thus we have the following formula 
\[
A\Lotimes_{A^e} A=A\otimes_{A^e} (A\otimes_{\tau_u} BA \otimes_{\tau_u} A)
\]
We have
\begin{equation}
\label{ref-B.1-108} 
A\otimes_{A^e} (A\otimes_{\tau_u} BA\otimes_{\tau_u} A)\cong ((A\otimes_l T_l(\Sigma\bar{A}))^l,d_A+d_{\text{Hoch}})
\end{equation}
where $d_{\text{Hoch}}$ is the usual Hochschild differential. 

The righthand side of \eqref{ref-B.1-108} is the reduced Hochschild complex. It is quasi-isomorphic to  the standard Hochschild
complex which has the form
\[
((A\otimes_l T_l(\Sigma A))^l,d_A+d_{\text{Hoch}})
\]
(see \cite[Prop. 1.6.5]{Loday1}).
\end{proof}
\begin{lemma}
\label{ref-B.2-109}
$A\otimes_{\tau_u} BA\otimes_{\tau_u} A$ is a cofibrant replacement for $A$  in $\PCDGMod(A^e)$
\end{lemma}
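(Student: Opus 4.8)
The plan is to establish the two defining properties of a cofibrant replacement separately. Write $P:=A\otimes_{\tau_u}BA\otimes_{\tau_u}A$ and let $\beta\colon P\to A$ be the augmentation, induced by the counit $BA\to l$ followed by the multiplication $A\otimes_l l\otimes_l A\to A$. I would first check that $\beta$ is a morphism in $\PCDGMod(A^e)$ (routine), then prove (i) $P$ is cofibrant and (ii) $\beta$ is a weak equivalence.

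For cofibrancy, forget the twisted differential: as a graded $A^e$-module $P$ is the induced bimodule $A\otimes_l BA\otimes_l A\cong A^e\ctimes_{l^e}BA$. Since $BA=\prod_{n\ge 0}(\Sigma\bar A)^{\otimes_l n}$ and $l$ is separable, $l^e$ is semisimple, so each $(\Sigma\bar A)^{\otimes_l n}$ is projective over $l^e$ and hence each $A^e\ctimes_{l^e}(\Sigma\bar A)^{\otimes_l n}$ is projective over $A^e$; as a product of projectives is projective in the pseudo-compact setting (see \S\ref{ref-4-0}), $P$ is projective over $A^e$ after forgetting the differential. By the description of the model structure in \S\ref{ref-6-8} this is exactly the statement that $0\to P$ is a cofibration, i.e.\ $P$ is cofibrant. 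Separately, the standard \emph{extra-degeneracy} homotopy $s(a_0\otimes(a_1\mid\cdots\mid a_n)\otimes a_{n+1})=1\otimes(a_0\mid a_1\mid\cdots\mid a_n)\otimes a_{n+1}$, inserting the unit $1\in l$, is right $A$-linear and contracts $\cone(\beta)$ as a complex of right $A$-modules (the internal differential commutes with $s$ because $1$ is a cycle). Hence $\cone(\beta)$ is contractible over $A^\circ$, in particular acyclic as a complex of $k$-vector spaces, so $\beta$ is a quasi-isomorphism.

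The hard part is that a quasi-isomorphism need \emph{not} be a weak equivalence in this Positselski/contraderived model structure, so the acyclicity just obtained is not yet enough. To upgrade it I would invoke the $A^e$-version of the bar criterion of \S\ref{ref-A.4-97}: since $l$ is separable one has $A^e\in\PCAlgc(l^e)$, and $\beta$ is a weak equivalence if and only if $B(A^e)\otimes_{\tau_u}P\to B(A^e)\otimes_{\tau_u}A$ is a quasi-isomorphism; as $B(A^e)\otimes_{\tau_u}(-)$ is exact (its underlying functor is $B(A^e)\otimes_{l^e}(-)$) this amounts to $B(A^e)\otimes_{\tau_u}\cone(\beta)$ being acyclic. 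Here I would filter by bar length. Because the bar differential and the twist $\delta_{\tau_u}$ are length non-increasing, $B(A^e)\otimes_{\tau_u}\cone(\beta)=\invlim_N\bigl(B(A^e)_{\le N}\otimes_{\tau_u}\cone(\beta)\bigr)$ with surjective transition maps. Each truncation carries a finite filtration whose associated graded is $(\Sigma\overline{A^e})^{\otimes_{l^e}p}\otimes_{l^e}\cone(\beta)$; since $\cone(\beta)$ is acyclic and $\otimes_{l^e}$ over the semisimple ring $l^e$ preserves acyclicity, every graded piece is acyclic, hence so is every truncation. Finally, as inverse limits are exact for pseudo-compact vector spaces (AB5${}^*$), the limit is acyclic — the same mechanism used in the proof of Theorem \ref{ref-9.1-23}. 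Therefore $\beta$ is a weak equivalence, and $\beta\colon P\to A$ is the required cofibrant replacement.

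I expect the genuine obstacle to be precisely this last step: passing from ``$\beta$ is a quasi-isomorphism'' to ``$\beta$ is a weak equivalence.'' The one-sided contracting homotopy is not $A^e$-linear, so it cannot directly trivialize $\cone(\beta)$ as a bimodule; what saves the argument is that the bar criterion reduces the claim to acyclicity of a product-total complex, which the completeness of the bar-length filtration together with exactness of pseudo-compact inverse limits delivers. Everything else (the explicit homotopy, the projectivity bookkeeping, the identification of the associated graded) is routine.
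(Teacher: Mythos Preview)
Your proof is correct, and the cofibrancy half coincides with the paper's, but your route to weak equivalence is genuinely different. The paper never uses the extra-degeneracy homotopy or a bar-length filtration. Instead it invokes the bimodule Quillen equivalence $(L^e,R^e)$ between $\PCDGMod(A^e)$ and $\PCDGComod((BA)^e)$ and observes that, since every object is fibrant/cofibrant on the appropriate side, $\mu_{13}\colon L^e(BA)\to A$ is a weak equivalence iff its adjoint $\Delta_{13}\colon BA\to BA\otimes_{\tau_u}A\otimes_{\tau_u}BA=R^eL^e(BA)$ is one. Viewing $\Delta_{13}$ as a map of \emph{left} $BA$-comodules identifies it with the unit $BA\to RL(BA)$ of the one-sided Quillen equivalence $(L,R)$, which is automatically a weak equivalence. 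This buys the paper a two-line argument with no explicit homotopies, no filtrations, and no appeal to AB5${}^*$ or semisimplicity of $l^e$; the price is that one must accept the two-variable Quillen equivalence \eqref{ref-B.2-110} as given. Your approach is more elementary and self-contained: it runs the weak-equivalence criterion directly through $B(A^e)$ rather than $(BA)^e$, and the filtration/inverse-limit argument is the same mechanism already used in Theorem~\ref{ref-9.1-23}, so nothing new is needed. The place where your argument is doing real work is exactly where you flagged it, and your resolution (acyclicity of $\cone\beta$ plus exactness of $\otimes_{l^e}$ over the semisimple ring $l^e$, then exact $\invlim$) is sound; note that since $l^e$ is semisimple the acyclic complex $\cone\beta$ is even contractible in $\PC(l^e)$, which makes the graded pieces transparently acyclic.
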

\begin{proof}
We have a Quillen equivalence
\begin{equation}
\label{ref-B.2-110}
\begin{gathered}
L^e:\PCDGComod((BA)^e)\r \PCDGMod(A^e):N\mapsto A\otimes_{\tau_u} N\otimes_{\tau_u} A\\
R^e:\PCDGMod(A^e)\r \PCDGComod(BA^e):M\mapsto BA\otimes_{\tau_u} M\otimes_{\tau_u} BA
\end{gathered}
\end{equation}
As $A\otimes_{\tau_u} BA\otimes_{\tau_u} A$ is a projective bimodule when forgetting
the differential it is cofibrant.

Hence we have to show that
\[
\mu_{13}:A\otimes_{\tau_u} BA\otimes_{\tau_u} A\r A:a\otimes b\otimes c\mapsto a\epsilon(b)c
\]
is a weak equivalence in $\PCDGMod(A^e)$. By the  Quillen equivalence 
between $\PCDGMod(A^e)$ and $\PCDGComod(BA^e)$ 
we may as well show that the adjoint map
\begin{equation}
\label{ref-B.3-111}
\Delta_{13}:BA \r BA\otimes_{\tau_u} A \otimes_{\tau_u} BA:c\mapsto c_{(1)}\otimes 1 \otimes c_{(2)}
\end{equation}
is a weak equivalence, or equivalently a quasi-isomorphism of $BA$-bi-comodules. If we view \eqref{ref-B.3-111}
as a map of \emph{left} comodules then it is precisely the unit map
\[
BA\r RL(BA)
\]
which is a weak equivalence (and hence quasi-isomorphism) since $(L,R)$ forms a Quillen equivalence. 
\end{proof}
\section{Symmetry for Hochschild homology}
\label{ref-C-112}
Assume that $A$ is an $l$-algebra and let $M$ be a finitely generated projective $A$-bimodule. Put $M^D=\Hom_{A^e}(M,A\otimes A)$.
Then an element $\xi\in M\otimes_{A^e} M$ defines a bimodule map
\[
\xi^+:M^D\r M:\phi\mapsto \phi(\xi')''\xi'' \phi(\xi')'
\]
and conversely using the identification
\[
\Hom_{A^e}(M^D,M)\cong M\otimes_{A^e} M
\]
any bimodule morphism $M^D\r M$ is of the form $\xi^+$ for some $\xi\in M\otimes_{A^e}M$.

\medskip

There is a $\ZZ/2\ZZ=\{1,\ss\}$-action on $M\otimes_{A^e} M$ such that $\ss(a\otimes b)=b\otimes a$.
One checks that
\[
\ss(\xi)^+=c\circ (\xi^+)^D
\]
for the canonical isomorphism $c:M\mapsto M^{DD}:m\mapsto (\phi\mapsto \phi(m)''\otimes \phi(m)')$

Hence if $\xi$ is symmetric and we view $c$ as an identification then $(\xi^+)^D=\xi^+$.

\medskip

What we have just explained extends to the case where $A$ is an $l$-DG-algebra and $M$ is 
a perfect object in $D(A^e)$
(where we now use the derived version of $(-)^D$ as introduced in \S\ref{ref-8-17}). We will
apply it in the case $M=A$. We will prove the following result
\begin{proposition}
$H_d(\ss)$ acts trivially on $\HH_d(A)=H_d(A\Lotimes_{A^e}A)$. Hence if $A$ is homologically smooth
then any $\eta:A^D\r {\Sigma^{-d}} A$ is automatically self dual. 
\end{proposition}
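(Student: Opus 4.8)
First I would dispatch the second assertion, which is formal once the first is known. Suppose $A$ is homologically smooth and $\eta\colon A^D\r\Sigma^{-d}A$ is an isomorphism in $D(A^e)$. Applying \eqref{ref-8.1-19} with $M=A$, $N=\Sigma^{-d}A$ and passing to $H_0$ identifies $\Hom_{D(A^e)}(A^D,\Sigma^{-d}A)$ with $\HH_d(A)=H_d(A\Lotimes_{A^e}A)$ via $\xi\mapsto\xi^+$, so $\eta=\xi^+$ for a unique class $\xi\in\HH_d(A)$. If $H_d(\ss)=\id$, then $\ss(\xi)=\xi$ as classes, and the displayed identity $\ss(\xi)^+=c\circ(\xi^+)^D$ gives $\eta=\xi^+=\ss(\xi)^+=c\circ\eta^D$; reading $c$ as an identification, this says precisely that $\eta$ is self dual. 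Everything therefore reduces to the first assertion, $H_d(\ss)=\id$.

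To attack this I would choose a chain model of $A\Lotimes_{A^e}A$ on which the swap is explicit. The natural choice is the two-sided bar resolution of Appendix \ref{ref-B-106}: the cofibrant replacement $A\otimes_{\tau_u}BA\otimes_{\tau_u}A$ computes $A\Lotimes_{A^e}A$, and tensoring two copies of it over $A^e$ produces a symmetric double complex, built from two bar directions, whose homology is again $\HH_\ast(A)$. On this double complex the swap $\ss(a\otimes b)=b\otimes a$ is forced to be the \emph{Koszul-signed} interchange of the two bar factors (the unsigned interchange is not even a chain map, because of the sign in the total differential). In homological degree zero it is visibly the identity of $A/[A,A]$, so the whole content of the first assertion is that this signed flip is chain homotopic to the identity.

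For the homotopy I would exploit the homotopy-cocommutativity of the bar construction. The plan is that the Eilenberg--Zilber/shuffle maps relating the double bar complex to the single (reduced) Hochschild complex are mutually inverse up to an explicit homotopy, and that the same shuffle data, together with the $l$-linear extra degeneracy of the augmentation $BA\r A$, yields a homotopy between the signed flip and the identity; equivalently, one shows by an acyclic-models-style comparison that the two quasi-isomorphisms from the double complex to a single Hochschild complex --- obtained by augmenting one or the other bar factor, and interchanged by $\ss$ --- are homotopic. The hard part will be the sign bookkeeping: triviality of the swap is genuinely special to the diagonal case $M=N=A$ and cannot be deduced from the abstract symmetry of $\Lotimes_{A^e}$ alone, since that symmetry squares to the identity but is not homotopic to the identity in general, so the Koszul signs carried by the two bar degrees must be tracked precisely. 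A secondary point to check is that the homotopy passes to the \emph{product} total complex and the completed tensor products of the pseudo-compact framework of Appendix \ref{ref-B-106}, rather than to a direct-sum total complex.
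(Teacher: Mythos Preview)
Your reduction of the second assertion to the first is correct and matches the discussion preceding the proposition. Your plan for the first assertion also has the right architecture: produce two ``collapse'' maps from a double resolution to a single Hochschild-type complex, observe that the swap $\ss$ interchanges them, and show they are homotopic. This is exactly the skeleton of the paper's argument.

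Where you diverge is in the choice of resolution. You propose the full bar resolution $A\otimes_{\tau_u}BA\otimes_{\tau_u}A$, which forces you into an infinite double complex and leaves the homotopy to be extracted from Eilenberg--Zilber/acyclic-models machinery with, as you say, delicate sign bookkeeping that you have not carried out. The paper instead first replaces $A$ by a cofibrant model (so $A=T_lV$) and uses the short resolution $0\to\Omega^1_lA\to A\otimes_lA\to A\to 0$ of \eqref{ref-7.4-16}. Tensoring this with itself over $A^e$ gives a $2\times 2$ square $Y(A)$ (see \eqref{ref-11.4-54}); the two collapse maps $l,r:Y(A)\to X(A)$ to the $X$-complex are the row/column augmentations, and the homotopy is the single explicit formula $h(a\otimes b)=s(aDb)_\natural$ on $(A\otimes_lA)_l$, zero elsewhere. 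Verifying $dh=l-r$ is then a handful of lines.

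So your approach is not wrong, but it is a plan rather than a proof, and it takes the long way around: the paper's trick of passing to the length-one resolution available in the cofibrant case turns the ``hard part'' you flag into an elementary computation. If you want to complete your version, the cleanest route is not EZ/shuffles but rather the same observation the paper uses at the bar level: both augmentation maps are quasi-isomorphisms to the reduced Hochschild complex, and an explicit contracting homotopy on one bar factor (the extra degeneracy) gives the chain homotopy between them directly, though you will still have more terms to track than in the $\Omega^1_lA$ picture.
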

\begin{proof}
 For our purpose we may and we will 
assume that $A$ is cofibrant.  We will use the complex $Y(A)
=\Sigma \Omega_l^1 A\otimes \Sigma \Omega_l^1 A\oplus \Sigma (\Omega_l^1 A)\oplus
\Sigma(\Omega_l^1 A)_l \oplus (A\otimes A)_l$
 to compute
$A\Lotimes_{A^e} A$ (see \eqref{ref-11.4-54}). 

Taking homology for rows and columns in $Y(A)$ we get two maps 
\[
l,r:Y(A)\r X(A)
\]
where by a slight abuse of notation we have written $X(A)$ for
$\cone((\Omega^1_lA)_\natural\xrightarrow{\partial_1} A_l)$ (see \S\ref{ref-7.2-12}). Note that by Proposition
\ref{ref-7.2.1-15} $X(A)$ computes the Hochschild homology of $A$. 

We claim that $l,r$ are homotopy equivalent. 
To prove this we will describe $l$ and $r$ explicitly:
\begin{align*}
l(s\omega_1\otimes s\omega_2)&=0&& (\text{on $\Sigma\Omega_l^1 A\otimes_{A^e}\Sigma\Omega_l^1 A$})\\
l(s\omega_1)&=0&& \text{(on the first copy of $(\Sigma\Omega_l^1 A)_l$)}\\
l(s\omega_2)&=s\omega_{2,\natural}&&\text{(on the second copy of $\Sigma(\Omega_l^1 A)_l$)}
\\
l(a\otimes b)&=\overline{ab}&& (\text{on $(A\otimes_l A)_l$})
\end{align*}
taking into account
that in \eqref{ref-11.4-54-1}, $s\omega_1$ is represented by $s\omega_1\otimes (1\otimes 1)$,
$s\omega_2$ is represented by $(1\otimes 1)\otimes s\omega_2$, $a\otimes b$ is
represented by $(a\otimes b)\otimes (1\otimes 1)$ and taking homology
for rows/columns corresponds to taking homology in the first/second factor.
For the last line one needs to take into account the identification \eqref{ref-11.5-55}.

Likewise we have
\begin{align*}
r(s\omega_1\otimes s\omega_2)&=0&& (\text{on $\Sigma\Omega_l^1 A\otimes_{A^e}\Sigma\Omega_l^1 A$})\\
r(s\omega_1)&=s\omega_{1,\natural}&& \text{(on the first copy of $(\Sigma\Omega_l^1 A)_l$)}\\
r(s\omega_2)&=0&&\text{(on the second copy of $\Sigma(\Omega_l^1 A)_l$)}\\
r(a\otimes b)&=(-1)^{|a||b|}\overline{ba}&& (\text{on $(A\otimes_l A)_l$})
\end{align*}

Thus for the difference $m=l-r$
\begin{align*}
m(s\omega_1\otimes s\omega_2)&=0\\
m(s\omega_1)&=-s\omega_{1,\natural}\\
m(s\omega_2)&=s\omega_{2,\natural}\\
m(a\otimes b)&=\overline{[a,b]}
\end{align*}
Now we define a map of degree $-1$ 
\[
h:Y(A)\r X(A)
\]
by
\begin{align*}
h(s\omega_1\otimes s\omega_2)&=0\\
h(s\omega_1)&=0\\
h(s\omega_2)&=0\\
h(a\otimes b)&=s(aDb)_{\natural}
\end{align*}
Now we compute $dh=[d,h]=\partial_1\circ h+h\circ(\partial_1\otimes 1+1\otimes\partial_1)$.
To this end we have to know $\partial_1^{\text{hor}}$ and $\partial_1^{\text{ver}}$.
We compute
\begin{align*}
\partial_1^{\text{hor}}(aDb)&=(\partial_1\otimes 1)(aDb\otimes (1\otimes 1))\\
&=(ab\otimes 1-a\otimes b) \otimes (1\otimes 1)\\
&=ab\otimes 1-a\otimes b
\end{align*}
\begin{align*}
\partial_1^{\text{ver}}(aDb)&=(1\otimes \partial_1)((1\otimes 1)\otimes aDb)\\
&=(1\otimes 1)\otimes (ab\otimes 1-a\otimes b)\\
&=1\otimes ab-(-1)^{|a||b|}b\otimes a
\end{align*}
taking into account the identification \eqref{ref-11.5-55}.

We now find 
\[
(dh)(s\omega_1\otimes s\omega_2)=0
\]
\begin{align*}
(dh)(s(aDb))&=h(ab\otimes 1-a\otimes b)\\
&=-s(aDb)_{\natural}&\text{(on the first copy of $\Sigma \Omega^1_l A$)}
\end{align*}
\begin{align*}
(dh)(s(aDb))
&=h((1\otimes ab-(-1)^{|a||b|}b\otimes a)\\
&=s(Dab)_{\natural}-(-1)^{|a||b|}s(bDa)_{\natural}\\
&=s(aDb)_{\natural}&\text{(on the second copy of $\Sigma\Omega^1_l A$)}
\end{align*}
\begin{align*}
(dh)(a\otimes b)&=\partial_1(s(aDb)_{\natural})&\\
&=\overline{[a,b]}
\end{align*}
Hence $h$ is indeed a homotopy connecting $l$ and $r$. 

\medskip

Now we have the following commutative diagram of complexes.
\[
\xymatrix{
Y(A) \ar[r]^\ss\ar[d]_l & Y(A)\ar[d]^r\\
X(A) \ar@{=}[r]& X(A)
}
\]
The top line comes from the fact that $Y(A)$ is obtained from tensoring the
bimodule resolution of $A$ with
itself over $A^e$.

Taking homology we obtain 
\[
\xymatrix{
H_d(Y(A)) \ar[r]^{H_d(\ss)}\ar[d]_{H_d(l)} & H_d(Y(A))
\ar[d]^{H_d(r)}\\
\HH_d(A) \ar@{=}[r]& \HH_d(A)
}
\]
Since $r$ and $l$ are homotopy we have $H_d(l)=H_d(r)$ and we are done. 
\end{proof}
\section{Koszul duality for Hochschild homology}
\label{ref-D-113}
The definition of the Hochschild mixed complex may be dualized to coalgebras.
If $C$ is a counital $l$-DG-coalgebra then the Hochschild mixed complex of
$C$ is $(\C(C),b,B)$ where $(\C(C),b)$ is the sum total complex of a
double complex of the form
\[
0\r C^l \xrightarrow{\partial} (C\otimes_l C)^l \xrightarrow{\partial} (C\otimes_l C\otimes_l C)^l\r \cdots
\]
where $l$ denotes the centralizer and $\partial$ is the dual of the
Hochschild differential.  $B$ is the dual of the Connes differential.
There
exist a similar normalized mixed complex denoted by $(\bar{\C}(C),b,B)$. The mixed
complexes $\bar{\C}(C)$ and
$\C(C)$ are quasi-isomorphic (see \cite[Prop. 1.6.5]{Loday1}). 

The following result is well known. 
\begin{proposition}
\label{ref-D.1-114}
Let $C\in \Cogc(l)$.  There is a quasi-isomorphism of mixed complexes.
\begin{equation}
\label{ref-D.1-115}
\C(\Omega C)\r \C(C)
\end{equation}
\end{proposition}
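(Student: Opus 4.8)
The plan is to reduce the computation of the Hochschild mixed complex of the algebra $\Omega C$ to its $X$-complex, and then to identify the resulting mixed complex with the (normalized) Hochschild mixed complex of the coalgebra $C$. First I would observe that $\Omega C = T_l(\Sigma^{-1}\bar{C})$ is a tensor algebra, and that the cocompleteness of $C$ supplies, via the coradical filtration, the ascending filtration required in the proof of Proposition \ref{ref-7.2.1-15}; hence $\Omega C$ is cofibrant and that proposition produces a quasi-isomorphism of mixed complexes $\sigma:\C(\Omega C)\to M(X(\Omega C))$. It therefore suffices to exhibit an isomorphism of mixed complexes $M(X(\Omega C))\cong \bar{\C}(C)$, since the normalized coalgebra complex $\bar{\C}(C)$ is quasi-isomorphic to $\C(C)$ by \cite[Prop. 1.6.5]{Loday1}; chaining these maps gives \eqref{ref-D.1-115}.

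Next I would compute the two components of $X(\Omega C)$ explicitly. Writing $A=\Omega C$ and $W=\Sigma^{-1}\bar{C}$, the standard isomorphism $\Omega^1_l A\cong A\otimes_l W\otimes_l A$ gives $(\Omega^1_l A)_\natural \cong (A\otimes_l W)_l=\bigoplus_{n\ge 0}(W^{\otimes_l n}\otimes_l W)_l$, while $A_l=\bigoplus_{n\ge 0}(W^{\otimes_l n})_l$. Thus $M(X(A))=\Sigma(A\otimes_l W)_l\oplus A_l$, and after applying the shift $W=\Sigma^{-1}\bar{C}$ together with the identification $(-)_l\cong(-)^l$ of \eqref{ref-5.1-4}, this is isomorphic as a graded space to $\bar{\C}(C)=\bigoplus_n (C\otimes_l \bar{C}^{\otimes_l n})^l$; the two columns of the $X$-complex, once expanded by tensor length, reproduce all the terms of the coalgebra bar complex. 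The substance of the argument is then to check that this identification intertwines the differentials and the Connes operators. The internal differential $b$ on $M(X(A))$ combines $\partial_1$ with the two Taylor components of the cobar differential \eqref{ref-A.5-90}, namely the part induced by $d_C$ and the part induced by the coproduct $\Delta$; I expect these to assemble exactly into the dual Hochschild differential on $\bar{\C}(C)$ (the alternating sum of coface maps built from $\Delta$ and $d_C$). Dually, the operator $B$ on $M(X(A))$, coming from $\partial_0(\bar{a})=\overline{Da}$, should correspond to the dual Connes differential on $\bar{\C}(C)$.

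The main obstacle is the sign- and shift-careful verification of this last matching: confirming that the coproduct part of the cobar differential yields precisely the coface maps of the coalgebra bar complex, and that the contraction $\partial_1(\overline{aDb})=\overline{[a,b]}$ on the length-one piece reproduces the correct boundary identification (as recorded in \eqref{ref-11.5-55}) after passage through $(-)_l\cong(-)^l$. I would organize this by first presenting $M(X(\Omega C))$ as the normalized total complex, so that the graded identification with $\bar{\C}(C)$ is manifest, and only then tracking the differentials and the Connes operator term by term. Once the isomorphism $M(X(\Omega C))\cong\bar{\C}(C)$ of mixed complexes is established, invoking \cite[Prop. 1.6.5]{Loday1} on both the algebra and coalgebra sides upgrades it to the desired quasi-isomorphism $\C(\Omega C)\to\C(C)$.
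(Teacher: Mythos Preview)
Your proposal is correct and follows essentially the same route as the paper: reduce $\C(\Omega C)$ to $M(X(\Omega C))$ via Proposition \ref{ref-7.2.1-15}, identify $M(X(\Omega C))$ with the normalized coalgebra complex $\bar{\C}(C)$ as mixed complexes, and finish with \cite[Prop.\ 1.6.5]{Loday1}. The only cosmetic difference is that where you compute $(\Omega^1_l A)_\natural$ by hand from $\Omega^1_l A\cong A\otimes_l W\otimes_l A$, the paper packages this step by invoking the dual of \cite[Theorem 4]{Quillen2} to obtain $(\Omega^1_l A)_\natural\cong \Sigma^{-1}\C(\bar{C})$ directly; both leave the compatibility with $(b,B)$ as a verification.
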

\begin{proof}
Put $A=\Omega C$.
Since $C$ is cocomplete $A$ is cofibrant. Hence we may apply
Proposition \ref{ref-7.2.1-15} to obtain a quasi-isomorphism
\[
\C(A)\r M X(A)
\]
By the dual version of \cite[Theorem 4]{Quillen2} we have an isomorphism of
complexes
\[
(\Omega_l^1 A)_\natural\cong \Sigma^{-1}\C(\bar{C})
\]
By definition $MX(A)=\cone( (\Omega_l^1 A)_\natural\xrightarrow{\partial_1} A_l)$.
Since $C$ is augmented we have $C=l\oplus \bar{C}$. We get isomorphisms
as graded vector spaces
\begin{align*}
\cone( (\Omega_l^1 A)_\natural\xrightarrow{\partial_1} A_l)&=\C(\bar{C})\oplus A_l \\
&=\C(\bar{C})\oplus T_l(\Sigma^{-1}\bar{C})\\
&=\bar{\C}(C)
\end{align*}
One checks that this isomorphism is compatible with $(b,B)$ and hence
 yields an isomorphism of mixed complexes
\[
M X(A)\cong (\bar{\C}(C),b,B)
\]
Combining this with the standard quasi-isomorphism of mixed complexes 
(see \cite[Prop. 1.6.5]{Loday1})
\[
(\bar{\C}(C),b,B)\r (\C(C),b,B)
\]
yields indeed a quasi-isomorphism as in \eqref{ref-D.1-115}.
\end{proof}
\begin{corollary} \label{ref-D.2-116}
Let $A\in \PCAlgc(l)$. Then we have a quasi-isomorphism of mixed complexes
\begin{equation}
\label{ref-D.2-117}
C(A^!)\r \DD \C(A)
\end{equation}
This quasi-isomorphism is
natural in $A$ (taking into account that $A\mapsto A^!$ is a contravariant functor). 
\end{corollary}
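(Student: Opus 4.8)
The plan is to reduce the statement to Proposition \ref{ref-D.1-114} by applying it to the coalgebra $C=\DD A$. Since $A\in\PCAlgc(l)$, its continuous dual $C=\DD A$ lies in $\Cogc(l)$ (the condition $\bar{A}=\rad A$ dualizes to cocompleteness), and by the definition of the Koszul dual in \eqref{ref-A.9-100} we have $A^!=\Omega\DD A=\Omega C$. Thus Proposition \ref{ref-D.1-114} furnishes directly a quasi-isomorphism of mixed complexes
$\C(A^!)=\C(\Omega C)\r \C(C)=\C(\DD A)$.

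It therefore remains to identify $\C(\DD A)$ with $\DD\C(A)$ as mixed complexes. The $n$-th term of the pseudo-compact Hochschild complex of $A$ is $(A^{\otimes_l n})_l$, and the complex is formed as a \emph{product} total complex. Applying $\DD$ and using that $\DD$ exchanges products with coproducts, together with the identifications $(U_l)^\ast=(U^\ast)^l$ from \S\ref{ref-5-3} and the compatibility of $\DD$ with $\otimes_l$ (which reverses the order of the $n$ identical factors and hence returns $((\DD A)^{\otimes_l n})^l$), one recovers precisely the \emph{sum} total complex whose $n$-th term is $(C^{\otimes_l n})^l$ with $C=\DD A$. But this is exactly the double complex defining the coalgebra Hochschild complex $\C(C)$ introduced at the start of this appendix. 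Moreover $\partial$ and $B$ for the coalgebra were defined there as the duals of the Hochschild differential $b$ and the Connes differential $B$ on $\C(A)$; hence the mixed-complex structures correspond and we obtain a canonical isomorphism $\C(\DD A)\cong\DD\C(A)$.

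Composing the two maps yields the desired quasi-isomorphism $\C(A^!)\r\DD\C(A)$. Naturality in $A$ follows from the naturality of $\DD$ and of the constructions $\Omega(-)$ and $\C(-)$; the contravariance enters only through $A\mapsto\DD A$, so the map is covariant in $C=\DD A$ and contravariant in $A$, as claimed.

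Because the coalgebra Hochschild complex is by construction the dual of the algebra one, the identification in the second paragraph is almost tautological and the genuine content is entirely carried by Proposition \ref{ref-D.1-114}. The one point requiring care is the bookkeeping around total-complex conventions: verifying that the product total complex computing the pseudo-compact $\HH_\ast(A)$ dualizes to the sum total complex used for coalgebras, and that the signs appearing in the dualized $b$ and $B$ match those prescribed in the definition of $\C(C)$. This is formal given the framework of \S\ref{ref-5-3} and \S\ref{ref-7.1-10}, but it is where the argument must be checked rather than asserted.
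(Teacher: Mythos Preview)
Your proof is correct and follows essentially the same route as the paper: apply Proposition~\ref{ref-D.1-114} to $C=\DD A$ (so that $\Omega C=A^!$), then use the identification $\C(\DD A)\cong\DD\C(A)$, with naturality inherited from that of \eqref{ref-D.1-115}. The paper states the identification $\DD\C(A)=\C(\DD A)$ without further comment, whereas you helpfully spell out the product-versus-sum total complex bookkeeping and the duality $(U_l)^\ast=(U^\ast)^l$ that underlie it.
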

\begin{proof} This follows the fact that $\DD \C(A)=\C(\DD A)$ together with
Proposition \ref{ref-D.1-114}. The naturality of \eqref{ref-D.2-117} follows
from the naturality of \eqref{ref-D.1-115}.
\end{proof}
\begin{corollary}
\label{ref-D.3-118}
  A   a weak equivalence $A\r A'$ in $\PCAlgc(l)$ induces a
  quasi-isomorphism $\C(A)\r \C(A')$ of mixed complexes. 
\end{corollary}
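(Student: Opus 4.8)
The plan is to deduce the statement from the Koszul-duality comparison of Hochschild complexes in Corollary \ref{ref-D.2-116}, thereby reducing it to the invariance of ordinary (non-pseudo-compact) Hochschild homology under quasi-isomorphisms of augmented $l$-DG-algebras.

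First I would translate the hypothesis into a statement about Koszul duals. Recall $A^!=\Omega\DD A$ (see \eqref{ref-A.9-100}), so that $(-)^!$ is contravariant and a morphism $f:A\r A'$ in $\PCAlgc(l)$ induces $f^!=\Omega\DD f:(A')^!\r A^!$. By definition $f$ is a weak equivalence precisely when $Bf:BA\r BA'$ is a quasi-isomorphism, and since $Bf=\DD\Omega\DD f=\DD(f^!)$ and $\DD$ is exact, this holds exactly when $f^!$ is a quasi-isomorphism of honest augmented $l$-DG-algebras (this is also immediate from Theorem \ref{ref-A.1.1-85}, as $\Omega$ preserves weak equivalences). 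Thus a weak equivalence $f$ yields a genuine quasi-isomorphism $f^!$.

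Next I would invoke the standard fact that the Hochschild mixed complex $\C(-)$ of an augmented $l$-DG-algebra is invariant under quasi-isomorphisms, so that $\C(f^!):\C((A')^!)\r\C(A^!)$ is a quasi-isomorphism of mixed complexes (recall that a morphism of mixed complexes is a quasi-isomorphism exactly when the underlying $b$-complexes are quasi-isomorphic). The justification is that $A^!$ and $(A')^!$ are cobar constructions, hence tensor algebras as graded algebras and in particular cofibrant; therefore $\C(-)$ computes the derived self-tensor product $(-)\Lotimes_{(-)^e}(-)$, which is preserved by $f^!$. Alternatively one may compare through $MX(-)$ using Proposition \ref{ref-7.2.1-15}, which applies since both algebras are cofibrant.

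Finally I would feed this into the naturality of the comparison map of Corollary \ref{ref-D.2-116}, which for $f:A\r A'$ produces a commutative square of mixed complexes
\[
\xymatrix{
\C((A')^!)\ar[r]\ar[d]_{\C(f^!)} & \DD\C(A')\ar[d]^{\DD\C(f)}\\
\C(A^!)\ar[r] & \DD\C(A)
}
\]
(both sides being contravariant in $A$). The horizontal arrows are quasi-isomorphisms by Corollary \ref{ref-D.2-116} and the left vertical arrow is a quasi-isomorphism by the previous step, so the right vertical arrow $\DD\C(f)$ is a quasi-isomorphism. Since $\DD$ is exact it reflects quasi-isomorphisms, whence $\C(f)$ is a quasi-isomorphism of mixed complexes, as claimed. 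The only non-formal ingredient is the Hochschild-invariance input of the third paragraph; once that and Corollary \ref{ref-D.2-116} are in hand, the argument is pure bookkeeping of variances and exactness of $\DD$, so I expect no serious obstacle.
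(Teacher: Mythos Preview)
Your proposal is correct and follows essentially the same route as the paper: pass to the Koszul dual to turn the weak equivalence into an honest quasi-isomorphism $f^!:(A')^!\r A^!$, use that this induces a quasi-isomorphism $\C((A')^!)\r\C(A^!)$, and then apply the naturality square from Corollary \ref{ref-D.2-116} together with exactness of $\DD$. The paper is terser (it simply asserts the Hochschild invariance step), whereas you supply the justification via cofibrancy of the cobar constructions, but the argument is the same.
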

\begin{proof} By definition of $(-)^!$ we obtain that $A^{\prime !}\r A^{!}$ is a
quasi-isomorphism in $\Alg(l)$. Hence this induces a quasi-isomorphism
 $\C(A^{\prime !})\r \C(A^{!})$. By Corollary \ref{ref-D.2-116} we
get a commutative diagram of mixed complexes
\[
\xymatrix{\C(A^{\prime !})\ar[d]_\cong \ar[r]^\cong & \DD C(A')\ar[d]\\
\C(A^{!})\ar[r]_\cong & \DD\C(A)
}
\]
So the rightmost map is indeed a quasi-isomorphism.
\end{proof}
\begin{corollary} \label{ref-D.4-119} Assume that $A\r A'$ is a weak
  equivalence in $\PCAlgc(l)$ between homologically smooth algebras. Then $A$ is exact $d$ Calabi-Yau if and
  only if this is the case for $A'$.
\end{corollary}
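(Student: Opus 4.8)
The plan is as follows. Since both $A$ and $A'$ are assumed homologically smooth, it suffices to show that the defining condition of Definition \ref{ref-8.3-21} --- the existence of an $\eta\in \HC_{d-1}$ with $B\eta$ non-degenerate --- transfers across $f$. First I would invoke Corollary \ref{ref-D.3-118}: the weak equivalence $f\colon A\to A'$ induces a quasi-isomorphism of mixed complexes $\C(A)\to \C(A')$. Passing to homology, this yields isomorphisms $\HC_{d-1}(A)\xrightarrow{\sim}\HC_{d-1}(A')$ and $\HH_d(A)\xrightarrow{\sim}\HH_d(A')$ that are compatible with the Connes map $B$ (indeed with the whole long exact sequence \eqref{ref-8.2-20}). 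Consequently any $\eta\in\HC_{d-1}(A)$ and its image $\eta'\in\HC_{d-1}(A')$ have the property that $B\eta$ and $B\eta'$ correspond under $\HH_d(A)\cong\HH_d(A')$, and the problem reduces to the single assertion that this isomorphism carries non-degenerate classes to non-degenerate classes.

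To settle this last point I would pass to bimodules. The weak equivalence $f$ induces an equivalence of triangulated categories
\[
L={A'}^e\Lotimes_{A^e}(-)\colon D(A^e)\xrightarrow{\ \sim\ } D({A'}^e),
\]
with quasi-inverse restriction of scalars along $f^e\colon A^e\to {A'}^e$; this is the bimodule incarnation of the derived equivalence attached to a weak equivalence in the bar-cobar formalism of Appendix \ref{ref-A-82}. Three compatibilities drive the argument. (i) $L(A)\cong A'$ in $D({A'}^e)$, since ${A'}^e\Lotimes_{A^e}A\cong A'\Lotimes_A A\Lotimes_A A'\cong A'\Lotimes_A A'$ and the multiplication $A'\Lotimes_A A'\to A'$ is a quasi-isomorphism because $f$ is a weak equivalence. (ii) $L(A^e)\cong {A'}^e$ and, $A$ being perfect over $A^e$, the equivalence $L$ commutes with $\RHom$ on perfect objects, so $L(M^D)\cong (LM)^{D'}$, where $(-)^{D'}$ denotes the duality over ${A'}^e$; in particular $L(A^D)\cong {A'}^{D'}$. (iii) $L$ intertwines the identifications \eqref{ref-8.1-19} over $A$ and over $A'$, and under $L$ the class $\xi$ is carried to its image $\xi'$ under $\HH_d(A)\cong\HH_d(A')$.

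Granting these, the morphism $\xi^{+}\colon A^D\to{\Sigma^{-d}}A$ is taken by the equivalence $L$ --- after the identifications in (i) and (ii) --- to ${\xi'}^{+}\colon {A'}^{D'}\to{\Sigma^{-d}}A'$. Since $L$ is an equivalence, $\xi^{+}$ is an isomorphism precisely when ${\xi'}^{+}$ is; that is, $\xi$ is non-degenerate if and only if $\xi'$ is. Combined with the first paragraph this proves that $A$ is exact $d$-Calabi-Yau if and only if $A'$ is.

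The hard part will be item (iii): one must check that the functorial map on Hochschild homology supplied by Corollary \ref{ref-D.3-118} agrees, under \eqref{ref-8.1-19}, with transport along $L$. Concretely this means verifying that the map of Hochschild mixed complexes produced (via Koszul duality) in Corollary \ref{ref-D.3-118} coincides with the naive map induced by $f$ on the bar complexes, and that the base change $L$ is compatible with the canonical identification $\RHom_{A^e}(A^D,A)\cong A\Lotimes_{A^e}A$ of \eqref{ref-8.1-19}. Both are bookkeeping with the twisted tensor products of Appendix \ref{ref-A-82}, but they are the only genuinely non-formal inputs; the remainder is a formal consequence of $L$ being a duality-preserving equivalence that sends $A$ to $A'$. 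A minor side point to dispatch is that $f^e\colon A^e\to {A'}^e$ is again a weak equivalence, so that $L$ is indeed an equivalence; this follows from the stability of weak equivalences under the (completed) tensor product $\ctimes$.
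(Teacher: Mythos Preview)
Your argument is correct and follows the same route as the paper: reduce via Corollary \ref{ref-D.3-118} to the claim that the induced isomorphism $\HH_d(A)\cong\HH_d(A')$ preserves non-degeneracy. The paper's own proof stops precisely there, calling this ``a formal verification which we leave to the reader,'' whereas you have actually supplied the outline of that verification through the base-change equivalence $L={A'}^e\Lotimes_{A^e}(-)$ and its compatibility with $(-)^D$ and with \eqref{ref-8.1-19}. So your proposal is not a different approach but rather a fleshed-out version of what the paper leaves implicit; the compatibilities you flag in (i)--(iii) and the stability of weak equivalences under $-\ctimes-$ are exactly the bookkeeping the reader is expected to do.
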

\begin{proof} Taking into account Corollary \ref{ref-D.3-118} we have to prove 
that $\eta\in \HH_d(A)$ is non-degenerate if and only if its image in $\HH_d(A')$
is non-degenerate. This is a formal verification which we leave to the reader. 
\end{proof}
\def\cprime{$'$} \def\cprime{$'$} \def\cprime{$'$}
\providecommand{\bysame}{\leavevmode\hbox to3em{\hrulefill}\thinspace}
\providecommand{\MR}{\relax\ifhmode\unskip\space\fi MR }
\providecommand{\MRhref}[2]{%
  \href{http://www.ams.org/mathscinet-getitem?mr=#1}{#2}
}
\providecommand{\href}[2]{#2}

\end{document}